\newenvironment{proof}[1][Proof]{\textbf{#1.} }{\ \rule{0.5em}{0.5em}}
 \numberwithin{equation}{section}
\newtheorem{theorem}{Theorem}[section]
\newtheorem{corollary}[theorem]{Corollary}
\newtheorem{lemma}[theorem]{Lemma}
\newtheorem{remark}{Remark}
\newtheorem{thm}{Theorem}[section]
\newtheorem{defn}[thm]{Definition}
\mathchardef\mhyphen="2D
\newcommand{\RR}{\mathbb{R}}
\newcommand{\mm}{\mathfrak{M}}
\newcommand{\avg}[2]{\langle #1 \rangle_{#2}}
\begin{document}

\title{Gradient weighted norm inequalities 
 for linear elliptic equations with discontinuous coefficients}

\author{ {Karthik Adimurthi}\\
Department of Mathematics, Louisiana State University\\
\and
{Tadele Mengesha}\\
Department of Mathematics,
University of Tennessee
\and
{Nguyen Cong Phuc }\\
Department of Mathematics,
Louisiana State University
}\maketitle

%

\begin{abstract}
Local and global weighted norm estimates involving Muckenhoupt weights   are obtained for gradient of solutions to linear elliptic Dirichlet boundary value problems in divergence form over a Lipschitz domain $\Omega$.  The gradient estimates are obtained in weighted Lebesgue and Lorentz  spaces, which also yield estimates in Lorentz-Morrey spaces as well as H\"older continuity of solutions.
The significance of the work lies on its applicability  to very weak solutions (that belong to $W^{1,p}_{0}(\Omega)$ for some $p>1$ but not necessarily in  $W^{1,2}_{0}(\Omega)$) to inhomogeneous equations with coefficients that may have discontinuities but 
have a small mean oscillation.
The domain  is assumed to have a Lipschitz boundary with small Lipschitz constant and as such allows corners. The approach implemented makes use of  localized sharp maximal function estimates as well as known regularity estimates for very weak solutions to the associated  homogeneous equations. 
The estimates are optimal in the sense that they coincide  with classical weighted gradient estimates in the event the coefficients are continuous and the domain has smooth boundary.  

\end{abstract}

\section{Introduction}
 \subsection*{Perspective and description of main results}
In this paper, we obtain weighted gradient estimates for  weak solutions $u$ of 
\begin{equation}
\label{basic-pde}
\begin{aligned}
\text{div}\, \mathbb{A}(x) \nabla u &=  \text{div}\, {\bf f} \quad \text{in}\, \Omega, \quad 
u = 0  \quad \text{on} \,\partial \Omega
\end{aligned}
\end{equation}
for a class of possibly discontinuous coefficients $\mathbb{A}$  and bounded domains $\Omega\subset\RR^n$ with corners. 
The matrix of coefficients $\mathbb{A}(x)$ is assumed to be 
\begin{equation}\label{meas-symm}
\text{measurable and symmetric, }
\end{equation}
and uniformly elliptic, i.e., there exists positive constants $\lambda $ and $\Lambda$ such that 
\begin{equation}\label{eq-Ellipticity}
\lambda|\xi|^{2} \leq\langle\mathbb{A}(x)\xi, \xi\rangle \leq \Lambda|\xi|^{2} \quad \text{for all $x\in \Omega$}. 
\end{equation}
where $\langle \cdot, \cdot\rangle $ represents the usual dot product in $\RR^n$. 
Our main result (see Theorem \ref{maintheorem} below) states  that given any $q\in(1, \infty)$ and any weight $w \in A_{q}$, the Muckenhoupt class,  under some additional conditions on the matrix of coefficients and on the boundary, corresponding to ${\bf f}\in L^{q}_{w}(\Omega;\mathbb{R}^{n})$, there is a unique weak solution $u$ to \eqref{basic-pde} accompanied by the estimate 
\begin{equation}\label{weightedlp-lp}
\int_{\Omega}|\nabla u|^{q}w dx \leq C \int_{\Omega} |{\bf f}|^{q} w dx.  
\end{equation}
By a weak solution to \eqref{basic-pde} we mean any function $u\in W^{1,p}_{0}(\Omega)$ \emph{for some $p>1$} such that 
 \begin{equation}\label{def-weak-solution}
 \int_{\Omega} \langle \mathbb{A}(x) \nabla u(x),  \nabla \psi(x)\rangle \ dx = \int_{\Omega} \langle {\bf f}(x), \nabla \psi (x)\rangle\ dx \ \quad \text{for all}\  \psi \in C_c^{\infty}(\Omega).
 \end{equation}
Note that \eqref{def-weak-solution} makes sense as long as $\nabla u$ and ${\bf f} \in L^1_{loc}(\Omega; \mathbb{R}^{n})$. However, 
$W^{1,1}_{0}(\Omega)$ solutions to  \eqref{basic-pde} are generally not unique even when $\mathbb{A}$ has continuous coefficients and $\Omega$ is a ball (see \cite{BRE,Jin-Mazya,Serrin}). Thus in this paper we shall only adopt  $W^{1,p}_{0}(\Omega)$ solutions for some $p>1$. 

The emphasis of the paper is not on the existence of a solution, but rather in obtaining the tighter estimate \eqref{weightedlp-lp}. In fact, for ${\bf f}\in L^q_{w}(\Omega; \mathbb{R}^{n})$ with $w\in A_q$, $q>1$, it follows from the theory of $A_q$ weights that
${\bf f}\in  L^p(\Omega; \mathbb{R}^{n})$ for some $p>1$ (see Corollary \ref{Lebesgue-vs-weighted} below). Thus under small mean oscillation condition on $\mathbb{A}$, which we will make precise in the next section,  and   small Lipschitz constant condition on the boundary of $\Omega$, a unique $W^{1,p}_{0}(\Omega)$ solution $u$ exists satisfying the continuity estimate 
\begin{equation}\label{lp-lp}
\|\nabla u\|_{L^{p}(\Omega)} \leq C\, \|{\bf f}\|_{L^{p}(\Omega)}, \qquad p >1. 
\end{equation}
The well posedness of \eqref{basic-pde} for data  ${\bf f}\in  L^p(\Omega; \mathbb{R}^{n})$, $p>1$,  together with the $W^{1,p}$ estimate \eqref{lp-lp} was obtained earlier in  \cite{B1, BW}. 
In relation to this result, the main result in this paper states that under the same conditions the solution-gradient operator ${\bf f}\mapsto \nabla u$ is continuous not only on the bigger space $L^{p}(\Omega)$, as given by the estimate  \eqref{lp-lp},  but also on the smaller space $L^{q}_{w}(\Omega)$ as in \eqref{weightedlp-lp}. 
Earlier works on the (unweighted) bound \eqref{lp-lp}  include \cite{Agmon,  DiF, AQ} for smoother coefficients and domains, and  \cite{JK} for general Lipschitz domains but only for a restricted range of $p$ around $2$.   

When the matrix $\mathbb{A}=\mathbb{I}$ and $\Omega=\mathbb{R}^{n}$, by means of Fourier transform we see that gradients of solutions to \eqref{basic-pde} can be written as
$$\nabla u=-[{\bf R}\otimes {\bf R}]\, {\bf f},
$$   
where ${\bf R}=({R}_1, {R}_2, \dots, {R}_n)$ is the Riesz transform. Explicitly, this means that 
$$\nabla u(x)=-\frac{\Gamma(\frac{n+2}{2})}{\pi^{\frac{n}{2}}}\, {\rm p.v.} \int_{\mathbb{R}^{n}} \frac{(x-y)\otimes (x-y)}{|x-y|^{n+2}}{\bf f}(y)\, dy, \qquad x\in \mathbb{R}^{n}.$$
Once we  have the above representation, the theory of Calder\'on-Zygmund (see \cite{St1}) immediately gives estimate \eqref{lp-lp} for all $p>1$. Moreover, the results of Coifman and Fefferman \cite{CF}  on weighted norm inequalities for singular integrals  (see also the forerunner Hunt-Muckenhoupt-Wheeden \cite{HMW} in the one dimensional case)  yield weighted estimate \eqref{weightedlp-lp} for all weights $w\in A_q$ and all $q>1$. Note that in this scenario, the requirement that $w\in A_q$ is optimal (see \cite[Section V.4.6]{St2}).

By now it is well understood that  general continuity estimates of the type  \eqref{lp-lp} fails to hold in the event that either $\mathbb{A}$ has components with large jump discontinuity or the boundary of $\Omega$ is not smooth enough. The examples provided by Meyers  \cite{Mey} that involve highly oscillatory coefficients, and by Jerison and Kenig  \cite{JK} posed over domains with large Lipschitz constant have demonstrated  that gradient of solutions corresponding to smooth ${\bf f}$ may not belong to $L^{p}$ for 
large $p$.  These examples and a duality argument  justify the necessity of requiring slowly changing coefficients and domains with flatter boundary if one wants to obtain well posedness of \eqref{basic-pde} together with the continuity estimate \eqref{lp-lp} for data in $L^{p}(\Omega)$ for any  $p>1$.

Weighted $W^{1,q}$ estimates for equation \eqref{basic-pde} over a bounded, possibly, nonsmooth domain $\Omega$ 
have also been  considered in several recent papers.  The work \cite{Ph2} (see also \cite{M-P1, M-P2, M-P3}) yields 
weighted estimate \eqref{weightedlp-lp} for $q>2$ and for weights $w\in A_{q/2}$. In \cite{AP2}, the  first and last named authors  worked out the end-point case $q=2$ for weights $w\in A_1$. Those weighted estimates also hold true for equations with general nonlinear structures such as those that  are modeled after  the $p$-Laplacian. However,
as seen from the basic linear case $\mathbb{A}=\mathbb{I}$ and $\Omega=\mathbb{R}^{n}$, they are by no means sharp. For example, when $q=2$   one should expect that \eqref{weightedlp-lp}  holds for all weights $w\in A_2$ instead of $A_1$.  Likewise, for each $q>1$ one should expect that \eqref{weightedlp-lp}  holds for all $w\in A_q$. And this is precisely what we have  achieved in this paper  at least for a wide class of  linear equations potentially with discontinuous coefficients over  nonsmooth domains. Thus our result improves a similar result in \cite[Theorem 2.5]{BDS2016}, which treats the case of continuous coefficients over $C^1$ domains.  Moreover, it also complements the recent work  \cite{Kim-Dong2016} in which a similar problem is studied for equations that  involve a linear lower order term of the form:
\begin{equation*}
\left\{ \begin{array}{cl}
\text{div} \mathbb{A}(x) \nabla u +\lambda\, u=  \text{div}\, {\bf f} & \text{in}\, \Omega, \\
u = 0 & \text{on} \,\partial \Omega, 
\end{array}
\right.
\end{equation*}
where $\lambda$ is sufficiently large. The linear term  $\lambda\, u$ {\it for  large $\lambda$} is used in an essential way in \cite{Kim-Dong2016} to obtain \eqref{weightedlp-lp}, whereas we  obtain essentially the same result 
in the most natural case $\lambda=0$.

Weighted estimate of the form \eqref{weightedlp-lp} that is valid for  all weights  $w \in A_q$ is clearly a generalization of 
\eqref{lp-lp}. In fact, much more can be said about \eqref{weightedlp-lp}. It implies not only   inequality \eqref{lp-lp}
 but also its \emph{vector-valued} analogues (see \cite[Theorem 9.5.10]{Gra} and \cite{RdF}). Moreover, as a consequence of
\eqref{weightedlp-lp}, we can deduce sharp estimates in non-interpolating spaces such as Lorentz-Morrey spaces. 

Besides the global weighted estimate \eqref{weightedlp-lp}, we also obtain  local versions both in the interior and near the boundary. The local interior a priori estimate essentially states that, for any $q>1$ and  $w\in A_q$, under a small mean oscillation condition on $\mathbb{A}$,  it holds that 
\begin{equation}\label{local-int}
\int_{B_{d}(x_{0})} |\nabla u|^{q} w\  dx \leq C \int_{B_{2d}(x_{0})}(|{\bf f}|^{q} + \left|u/d\right|^{q})w \ dx 
\end{equation}
for all  $d\leq 1/M$ and $x_{0}\in \Omega$ such that $B_{Md}(x_{0})\subset \Omega$.  Here the constants $C>0$, $M>2$ depend on 
the $A_q$ constant of $w$ but are independent of $d, x_0$, and $u$. See Theorem \ref{main-interior-local-estimate}.
Inequality \eqref{local-int} can be viewed as an $L^q$ weighted Caccioppoli estimates that is well-known in the case $w\equiv 1$ and $q=2$. For $w\equiv 1$ and $q>2$, this has been obtained in \cite{B1, BW}. On the other hand, it appears to us that  in the sub-quadratic case $q\in (1,2)$, estimate  \eqref{local-int} is new even for $w\equiv 1$. A usual way to achieve this sub-quadratic Caccioppoli type estimate 
 is to localize the problem by  multiplying the solution $u$ itself by an appropriate  cut-off function $\varphi$ compactly supported in $B_{2d}(x_0)$  and then applying the global bound \eqref{lp-lp} over this ball. However, since the product $u\varphi$ then solves
\[
\text{div} [\mathbb{A}(x) \nabla (u \varphi)] = \text{div} [{\bf f} \varphi + \mathbb{A}(x) \nabla \varphi u] + \langle(\mathbb{A}(x)\nabla u - {\bf f}), \nabla \varphi\rangle, 
\]
which involves $\nabla u$  on the right-hand side, it is thus by no means obvious that one can absorb its $L^q$ norm  to the left-hand side. We shall show that this is possible by an integration by parts  combined with a covering/iteration argument. Local weighted a priori estimates similar to \eqref{local-int} but near the boundary of a Lipschitz domain with small Lipschitz constant can be found in    Theorem \ref{Lip-boundary-main}. These local interior and boundary estimates yield the global bound 
\eqref{weightedlp-lp} via a standard  covering argument combined with a weighted Sobolev embedding theorem.

\subsection*{Discussion on the approach}
The question of finding an optimal condition on the coefficients  and on the boundary so that  \eqref{basic-pde} is well-posed in a variety of function spaces and accompanied by a continuity estimate has garnered a lot of attention. Intuitively, for  reasonably good coefficients and domains,  if a solution $u$ to \eqref{basic-pde} exists then one expects the data ${\bf f}$ and $\nabla u$ belong to the same space, of course with the exception of   extreme and end-point spaces. There are a number of approaches to address this issue for elliptic boundary value problems in divergence form. One popular approach 
 is the approximation method pioneered by Caffarelli and Peral in \cite{CP} that avoids the use of singular integral theory directly but rather studies the integrability of gradient of solutions as a function of the deviation of the coefficients from constant coefficients. That method has been successfully implemented in \cite{B1, BW} with the use of 
Hardy-Littlewood maximal function to treat divergence form data on irregular domains; see also   \cite{AP1, AP2, M-P1, M-P2, M-P3, Jia-Li-Wang07, Jia-Li-Wang11, BYZ} where the method is used in different  function spaces  such as weighted Lebesgue and Lorentz spaces, Lorentz-Morrey spaces, and  Orlicz spaces. Another approach that employs a local version of the sharp maximal function of Fefferman and Stein was implemented in \cite{KZ1, KZ2} (see also the earlier work of T. Iwaniec \cite{I}). Unlike the approach in \cite{CP, B1, BW}, this method relies on the availability  of $C^{1, \alpha}$ regularity of  the associated homogenous equation with constant coefficients. This approach has been used to obtained a weighted estimate in \cite{Ph2}
(see also \cite{Kim-Dong2016}) in which a local weighted control by the sharp maximal function  was also obtained (see \cite[Corollary 2.7]{Ph2} or Lemma \ref{lemma_sharp_phuc} below).

In this paper, we shall follow the path of \cite{KZ1, KZ2, Ph2} to obtain \eqref{local-int} and its boundary analogue.
However, unlike the scenario in \cite{Ph2}, here we are subject to working with the largest possible class of weights for the weighted estimate under consideration. This forces us to come up with some new \emph{local comparison estimates} in $L^p$ spaces  for arbitrarily small $p>1$; see Lemma \ref{comparison-estimate} and Lemma \ref{flat-comparison-estimate-lemma}. It is fair to say that most of the technical parts of the paper lie in the proofs  of these  comparison estimates.

\subsection*{Organization of the paper} In Section \ref{MainStat} we will introduce  notations and state the main results of the paper. In Section \ref{Norm-ineq} we present some backgrounds on weighted norm inequalities that will be needed throughout the paper. To help us break down the proof of the main result, we reduce the main problem to certain {\it a priori} estimates of Caccioppoli type in Section \ref{a priori}. In Sections \ref{local interior} and \ref{local boundary}, we  prove  local  weighted $L^{q}$ interior and boundary estimates, respectively. Finally, we put the local estimates together in  Section \ref{global} to obtain the global results stated in Section \ref{MainStat}.

 \section{Notations and Statement of main results}\label{MainStat}
 \subsection{Notations}
 The notation $\langle \cdot, \cdot\rangle $ represents the usual dot product in $\RR^n$, and when we find it convinient we also use $\cdot$ notation for it. 
 
 The function spaces $L^{p}(\Omega)$ and  $W^{1,p}(\Omega)$ are the usual Lebesgue and Sobolev spaces, with  $L^{p}_{loc}(\Omega)$,  $W^{1,p}_{loc}(\Omega)$ being their local versions. The space $W^{1,p}_{0}(\Omega)$ is defined as the 
completion of $C_c^\infty(\Omega)$ under the norm of $W^{1,p}(\Omega)$. For $r\in [1, \infty]$, we write $r'=r/(r-1)$ to denote its H\"older conjugate exponent.

A nonnegative function $w \in L^1_{loc}(\RR^n)$ is said to belong the Muckenhoupt weight class $A_q$ if 
   \begin{equation*}
  \begin{split}
    [w]_{A_q} &:= \sup_{B \subset \RR^n} \left(  \fint_B w(x) \ dx \right) \left(  \fint_B w(x)^{\frac{-1}{q-1}} \ dx \right)^{q-1} < +\infty, \quad 1 < q < \infty,\\
    [w]_{A_1} &:= \sup_{B \subset \RR^n} \left(  \fint_B w(x) \ dx \right) \|w^{-1}\|_{L^{\infty}(B)} < +\infty, \quad q=1,
    \end{split}
   \end{equation*}
   where the supremum is taken over all balls $B \subset \RR^n$,  the quantity $[w]_{A_q}$ is called the $A_q$ constant of $w$, and 
   for function $g$ and any set $D$, we have used the notation   
 \[\langle g\rangle_{D} = \fint_{D} g(y)dy = \frac{1}{|D|}\int_{D} g(y)dy.  \]
  
	For a given weight $w$,  the weighted Lebesgue space $L^{q}_{w}(\Omega)$ is the space of measurable functions $f$ such that $\|f\|_{L^{q}_{w}(\Omega)}^{q} := \int_{\Omega}|f|^{q} w(x)dx <+\infty$.  Note that for $1< q<+\infty$ the space 
	$C_c^\infty(\Omega)$ is dense in the  weighted space $L^{q}_{w}(\Omega)$ provided $w\in A_{q}$.
	 The weighted Sobolev space $W^{1, q}_{w}(\Omega)$ is the set of measurable functions  $u\in L^{q}_{w}(\Omega)$ whose weak gradient $\nabla u\in L^{q}_{w}(\Omega)$. The space $W^{1, q}_{w}(\Omega)$ is a Banach space with the norm  
 \[
 \|u\|_{W^{1, q}_{w}(\Omega)} ^{q} = \|u\|_{L^{q}_{w}}^{q} + \|\nabla u\|_{L^{q}_{w}}^{q}. 
 \]
 The closure of $C_{c}^{\infty}(\Omega)$ with respect to the norm $\|\cdot \|_{W^{1, q}_{w}(\Omega)}$ is denoted by $W^{1, q}_{w, 0}(\Omega)$. 
 We write $u\in W^{1, q}_{w, loc}(\Omega)$ if for any $\Omega'\Subset \Omega$, we have $u\in W^{1,q}_{w}(\Omega')$. For functions $u\in W^{1,q}_{w, 0}(\Omega)$ with $w\in A_q$, we shall use the following version of weighted Sobolev inequality that can be deduced from \cite[Theorem 15.23]{HKM}:  
\begin{equation}\label{weightedSob}
\left(\frac{1}{w(B)}\int_{\Omega} |u|^{\frac{nq}{n-1}} w dx\right)^{\frac{n-1}{nq}}\leq C(n, q, [w]_{A_q}) |B|^{\frac{1}{n}} \left(\frac{1}{w(B)}\int_{\Omega} |\nabla u|^q w dx\right)^{\frac{1}{q}}
\end{equation}
for any ball $B$ that contains $\Omega$. 

As we stated earlier, our aim is to prove that if $w\in A_{q}$ and ${\bf f}\in L^{q}_{w}(\Omega)$, then $\nabla u\in L^{q}_{w}(\Omega)$. 
 To achieve this, we need to require that the coefficient matrix $\mathbb{A}$ has a small mean oscillation and the boundary is flat enough,  conditions that we will make precise next.  
\begin{defn}
For given $\mathfrak{K}>0$ and $\delta>0$, we say $\mathbb{A}$ is $(\delta,\mathfrak{K})$-BMO if 
\[
\|\mathbb{A}\|_{*, \mathfrak{K}} := \sup_{x\in \overline{\Omega}} \ \sup_{0<\rho <\mathfrak{K}} \fint_{B(x,\rho)\cap \Omega}|\mathbb{A} - \langle\mathbb{A}\rangle_{B(x, \rho)\cap \Omega} |dx < \delta.
\]
 \end{defn}
 We will work on Lipschitz domains with small Lipschitz constant as defined below. 
 \begin{defn}
 For a given $\mathfrak{K}$ and $\delta> 0$, we say that $\Omega$ is $(\delta,\mathfrak{K})$-Lip if for every $x_{0}\in \partial\Omega$, there exists a Lipschitz continuous function $\Gamma: \mathbb{R}^{n-1}\to \mathbb{R}$ such that $\|\nabla \Gamma\|_{L^{\infty}(\RR^{n-1})} <\delta$ and, upon rotating and   relabeling of coordinates if necessary, 
 \[
 \Omega\cap B_{\mathfrak{K}}(x_{0}) = \{x=(x', x_{n})\in B_{\mathfrak{K}}(x_{0}): x_{n} > \Gamma(x')\}.
 \]  
 \end{defn}
 
 \subsection{Statements of main results}
 The main result we will be proving is stated the following theorem. 
 \begin{theorem}\label{maintheorem}
Suppose that $\mathfrak{K}>0$, $M_{0} > 0$,  $1 < q < \infty$ and $w\in A_{q}$ such that $[w]_{A_{q}} \leq M_{0}$. Suppose also that $\mathbb{A}$ satisfies \eqref{meas-symm} and \eqref{eq-Ellipticity}. Then there exists $\delta=\delta(\lambda,
\Lambda, q, n,  M_{0})> 0$ such that for any $\mathbb{A}$ that is $(\delta, \mathfrak{K})$-BMO and $\Omega$ a $(\delta, \mathfrak{K})$-Lip domain, and any  ${\bf f}\in L^{q}_{w}(\Omega;\mathbb{R}^{n})$, there exists a unique weak solution $u$ to \eqref{basic-pde} where $u \in W_0^{1,p} (\Omega)$ for some $p>1$, 
 $\nabla u\in L^{q}_{w}(\Omega;\mathbb{R}^{n})$  and satisfies the continuity estimate   
\begin{equation}\label{stability}
\int_{\Omega} |\nabla u|^q w(x) \ dx \leq C \int_{\Omega} |{\bf f}|^{q}w(x)dx. 
\end{equation}
The constant $C$  depends only on $\lambda,\Lambda,q,n, M_{0}$ and $\text{diam}(\Omega)/\mathfrak{K}$. 
\end{theorem}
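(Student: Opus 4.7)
The plan is to deduce \eqref{stability} from two localized weighted Caccioppoli-type estimates: the interior version \eqref{local-int} (Theorem \ref{main-interior-local-estimate}) and its boundary analogue (Theorem \ref{Lip-boundary-main}), and then patch them by a covering argument and the weighted Sobolev inequality \eqref{weightedSob}. Existence and uniqueness of $u \in W^{1,p}_{0}(\Omega)$ come essentially for free: since $w \in A_q$, the reverse H\"older property of Muckenhoupt weights (Corollary \ref{Lebesgue-vs-weighted}) yields ${\bf f} \in L^{p}(\Omega; \mathbb{R}^{n})$ for some $p>1$, and the unweighted theory of \cite{B1, BW} produces a unique weak solution satisfying \eqref{lp-lp}. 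The work is therefore to upgrade this unweighted $L^p$ bound to the weighted bound \eqref{stability}.

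For the interior estimate, I would follow the sharp-maximal-function strategy of \cite{KZ1, KZ2, Ph2}. On a small ball $B_{r}(y) \subset B_{d}(x_{0})$, compare $u$ to the solution $v$ of the frozen-coefficient Dirichlet problem
\[
\text{div}\, \langle \mathbb{A}\rangle_{B_{r}(y)} \nabla v = 0 \ \text{in}\ B_{r}(y), \qquad v = u\ \text{on}\ \partial B_{r}(y).
\]
Interior $C^{1,\alpha}$ regularity of the constant-coefficient problem controls the oscillation of $\nabla v$, while an $L^p$ comparison estimate for $\nabla u - \nabla v$, valid for \emph{arbitrarily small} $p > 1$, bounds the remainder by $\delta$ times an averaged $L^p$ norm of $\nabla u$ on a slightly larger ball plus the data. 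Combined with the weighted sharp-maximal-function machinery (Lemma \ref{lemma_sharp_phuc}), this converts the smallness of $\delta$ into the smallness needed to absorb a weighted maximal function of $\nabla u$, yielding \eqref{local-int}. Choosing $\delta$ small in terms of $\lambda, \Lambda, q, n$ and $[w]_{A_q} \leq M_{0}$ is the step that selects the threshold appearing in the theorem.

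The boundary estimate proceeds in parallel: using the $(\delta,\mathfrak{K})$-Lip hypothesis, flatten the boundary by a bilipschitz change of variables with Jacobian close to the identity, and then compare with the zero-Dirichlet solution on the flat upper half-ball for the averaged coefficient matrix. The corresponding flat-boundary $L^p$ comparison estimate drives the same sharp-maximal-function argument up to the boundary. With both local bounds in hand, cover $\overline{\Omega}$ by finitely many interior balls and boundary charts of radius comparable to $\mathfrak{K}$, sum the resulting weighted $L^q$ bounds, and absorb the nuisance term $\int |u/d|^{q} w\, dx$ via the weighted Sobolev embedding \eqref{weightedSob} applied to $u \in W^{1,q}_{w,0}(\Omega)$; this is where the dependence on $\mathrm{diam}(\Omega)/\mathfrak{K}$ enters the final constant. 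Uniqueness at the weighted level is then inherited from the uniqueness at the $W^{1,p}_{0}$ level.

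The main obstacle is establishing the $L^p$ comparison estimates for arbitrarily small $p>1$, especially in the sub-quadratic regime. The naive route of testing $u\varphi$ against the equation with a cutoff $\varphi$ and invoking the global $W^{1,p}$ theory on the ball introduces a right-hand-side term of the form $\langle \mathbb{A}\nabla u - {\bf f},\, \nabla\varphi\rangle$ which still depends pointwise on $\nabla u$ and cannot be absorbed directly on the left. I would handle this by integrating by parts to rewrite the offending term as a divergence, and then iterating the resulting estimate over a geometric sequence of concentric balls (respectively half-balls at the boundary) to close the argument. This is the technical heart of the paper and is precisely where genuine new work is required beyond the earlier weighted results of \cite{Ph2, AP2, M-P1, M-P2, M-P3}.
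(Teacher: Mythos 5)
Your overall architecture (interior and boundary Caccioppoli-type weighted estimates via frozen-coefficient comparison, small-$p$ comparison lemmas, the truncated sharp maximal function, boundary flattening, then a covering argument) is exactly the paper's route. However, there is a genuine gap at the final step, where you claim to ``absorb the nuisance term $\int_{\Omega}|u/d|^{q}w\,dx$ via the weighted Sobolev embedding \eqref{weightedSob} applied to $u\in W^{1,q}_{w,0}(\Omega)$.'' A direct application of \eqref{weightedSob} (or of a weighted Poincar\'e inequality) only gives
\begin{equation*}
\int_{\Omega}\Big|\frac{u}{\mathfrak{K}}\Big|^{q}w\,dx \;\leq\; C\Big(\frac{\mathrm{diam}(\Omega)}{\mathfrak{K}}\Big)^{q}\int_{\Omega}|\nabla u|^{q}w\,dx,
\end{equation*}
and the constant $C(\mathrm{diam}(\Omega)/\mathfrak{K})^{q}$ is in no way small, so this term cannot be absorbed into the left-hand side of \eqref{stability-Cacciaopoli}. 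The paper's actual argument is more delicate: it interpolates $\|u\|_{L^{q}_{\overline{w}}}\leq \|u\|_{L^{\epsilon}_{\overline{w}}}^{\theta}\|u\|_{L^{nq/(n-1)}_{\overline{w}}}^{1-\theta}$, applies \eqref{weightedSob} only to the high-exponent factor and Young's inequality to put an \emph{arbitrarily small} coefficient $\eta$ in front of $\|\nabla u\|_{L^{q}_{\overline{w}}}$, and then — this is the key missing idea in your proposal — bounds the low-exponent norm $\|u\|_{L^{\epsilon}_{\overline{w}}}$ directly by the datum, $\|u\|_{L^{\epsilon}_{\overline{w}}}\leq C\,\mathrm{diam}(\Omega)\,\|{\bf f}\|_{L^{q}_{\overline{w}}}$, using the \emph{unweighted} a priori estimate \eqref{byun_a_priori} together with the reverse H\"older inequality and \eqref{holder_weights}. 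Without some such input from the unweighted theory, the absorption does not close.

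A second, related omission: you run the argument directly on $u$ for general ${\bf f}\in L^{q}_{w}$, tacitly assuming $u\in W^{1,q}_{w,0}(\Omega)$ and $\int_{\Omega}|\nabla u|^{q}w\,dx<\infty$, neither of which is known a priori (absorption is only legitimate when the absorbed quantity is finite). The paper avoids this by first proving the Caccioppoli-type bound for ${\bf f}_m\in C_{c}^{\infty}(\Omega;\mathbb{R}^{n})$, for which the solution $u_m$ lies in $W^{1,\overline{\mathfrak{t}}}_{0}(\Omega)\subset W^{1,q}_{w,0}(\Omega)$ by Corollary \ref{Lebesgue-vs-weighted}, and then passing to general ${\bf f}$ by density in $L^{q}_{w}$, almost-everywhere convergence of $\nabla u_m$ (from the unweighted $W^{1,p}$ continuity), and Fatou's lemma. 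You should incorporate both this approximation step and the interpolation-plus-unweighted-estimate absorption to make the deduction of \eqref{stability} from Theorem \ref{maintheorem-Cacciaopoli} complete.
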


Before listing the consequences of Theorem  \ref{maintheorem}, we mention that by the theory of extrapolation
of Rubio de Francia, it is sufficient to prove  it for the case $q=2$ (see, e.g., \cite[Appendix A]{AP2} and \cite{GR}). However, Theorem \ref{maintheorem} will be proved directly for all $q>1$ and thus the  
theory of extrapolation is not needed in this paper.

There are a number of  corollaries that can be deduced from the above weighted $L^{q}$ estimate. The first involves refining estimate \eqref{stability}
when the data is in the weighted Lorentz space. As we will see in the proof, the weighted Lorentz space estimate given below is possible via interpolation using the properties of $A_{q}$ weights and the fact that the solution-gradient operator ${\bf f}\mapsto \nabla u$ is  linear. For given $0<r\leq\infty$ and $0<q< \infty$,  and a weight function $w$,  the weighted  Lorentz space $L^{q, r}_{w}(\Omega)$  is defined as the space of measurable functions $g$  on $\Omega$ such that 
\[
\|g\|_{L^{q, r}_{w}(\Omega)}:=\left(q \int_{0}^{\infty}( t^{q}w(\{ x\in \Omega: |g(x)| > t \}) )^{r/q} \frac{d t}{t}\right)^{1/r} < \infty,
\]
for $r \neq \infty$; and for $r  = \infty$, the space $L_{w}^{q,\infty}(\Omega) $ is the usual weighted weak-$L^{q}$ space with quasinorm 
\[
\|g\|_{L^{q, \infty}_{w}(\Omega)}:= \sup_{t > 0} t w(\{x\in \Omega: |g(x)| > t \})^{1/q}. 
\]
When $q = r$, the space $L^{q, r}_{w}(\Omega)$ is precisely the weighted $L^{q}$ space and will be denoted simply by $L^{q}_{w}(\Omega)$. 
\begin{corollary}\label{weightedLorentz}
Suppose that $M_{0} > 0$ and  $w\in A_{q}$ such that $[w]_{A_{q}} \leq M_{0}$. 
 Suppose also that $\mathfrak{K}>0$, $0< r \leq \infty$, and $1 < q < \infty$ are given, and that $\mathbb{A}$ satisfies \eqref{meas-symm} and\eqref{eq-Ellipticity}. Then there exists $\delta=\delta(\lambda, \Lambda, q, n,  M_{0})> 0$ such that for any $\mathbb{A}$ that is $(\delta, \mathfrak{K})$-BMO and $\Omega$ a $(\delta, \mathfrak{K})$-Lip domain, and any  ${\bf f}\in L^{q, r}_{w}(\Omega;\mathbb{R}^{n})$, there exists a unique weak solution $u$ to \eqref{basic-pde} where $u \in W_0^{1,p} (\Omega)$ for some $p>1$, 
 $\nabla u\in L^{q, r}_{w}(\Omega;\mathbb{R}^{n})$  and satisfies the continuity estimate   
\begin{equation*}
\|\nabla u\|_{L^{q, r}_{w} (\Omega)} \leq C \| {\bf f}\|_{L^{q, r}_{w}(\Omega)}. 
\end{equation*}
 The constant $C$  depends only on $ \lambda, \Lambda, q,n, r, M_{0}$ and $\text{diam}(\Omega)/\mathfrak{K}$. 
\end{corollary}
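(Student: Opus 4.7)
The plan is to deduce the Lorentz space estimate from Theorem \ref{maintheorem} by real interpolation, exploiting the fact that the solution-gradient operator $T:{\bf f}\mapsto\nabla u$ is linear. The starting point is the openness (self-improvement) property of Muckenhoupt classes: since $w\in A_q$ with $[w]_{A_q}\leq M_0$, there exists $\epsilon=\epsilon(n,q,M_0)\in(0,q-1)$ such that $w\in A_{q-\epsilon}$ with $[w]_{A_{q-\epsilon}}\leq M_0'$ depending only on $n,q,M_0$; and since $A_{q-\epsilon}\subset A_{q+\epsilon}$, we also have $w\in A_{q+\epsilon}$ with a comparable constant. I would set $q_1=q-\epsilon$ and $q_2=q+\epsilon$ and pick $\delta$ to be the minimum of the two values produced by Theorem \ref{maintheorem} applied at the exponents $q_1$ and $q_2$ with these $A_{q_i}$ constants.

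With this choice of $\delta$, Theorem \ref{maintheorem} supplies, for each $i=1,2$ and any datum ${\bf g}\in L^{q_i}_w(\Omega;\RR^n)$, the bound $\|\nabla v\|_{L^{q_i}_w(\Omega)}\leq C_i\|{\bf g}\|_{L^{q_i}_w(\Omega)}$ for the unique $W^{1,p}_0$ solution $v$ corresponding to ${\bf g}$. To construct a solution for ${\bf f}\in L^{q,r}_w$, I would use the layer-cake splitting ${\bf f}={\bf f}\chi_{\{|{\bf f}|>1\}}+{\bf f}\chi_{\{|{\bf f}|\leq 1\}}$: the first summand lies in $L^{q_1}_w$ and the second in $L^{q_2}_w$, witnessing the embedding $L^{q,r}_w\hookrightarrow L^{q_1}_w+L^{q_2}_w$. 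Linearity then yields $u=u^1+u^2\in W^{1,p}_0(\Omega)$ solving \eqref{basic-pde} with datum ${\bf f}$, and uniqueness in $W^{1,p}_0(\Omega)$ is inherited from Theorem \ref{maintheorem}, since the difference of two such solutions solves the problem with zero data and therefore vanishes.

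Finally, to obtain the claimed quantitative Lorentz bound, I would apply the Marcinkiewicz (real) interpolation theorem to $T$ viewed as a linear operator on the measure space $(\Omega, w\,dx)$: its boundedness on $L^{q_1}_w$ and on $L^{q_2}_w$ transfers automatically to boundedness on the real interpolation space $(L^{q_1}_w, L^{q_2}_w)_{\theta,r}=L^{q,r}_w$, where $\theta\in(0,1)$ is fixed by $1/q=(1-\theta)/q_1+\theta/q_2$. This delivers $\|\nabla u\|_{L^{q,r}_w(\Omega)}\leq C\|{\bf f}\|_{L^{q,r}_w(\Omega)}$ with $C$ depending only on $\lambda,\Lambda,q,n,r,M_0$ and $\text{diam}(\Omega)/\mathfrak{K}$, as claimed. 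The only subtle point in the argument is bookkeeping: the self-improvement exponent $\epsilon$ must be controlled purely in terms of $n,q,M_0$ so that both $\delta$ and the final constant $C$ depend only on the parameters stated in the corollary. Beyond this verification there is no new analytic input over what is already contained in Theorem \ref{maintheorem}.
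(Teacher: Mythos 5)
Your proposal is correct and follows essentially the same route as the paper: the open-ended property of $A_q$ (Lemma \ref{open_ended}) gives boundedness of the linear solution-gradient operator on $L^{q-\epsilon}_{w}$ and $L^{q+\epsilon}_{w}$ via Theorem \ref{maintheorem}, and real (Marcinkiewicz-type) interpolation with $1/q=(1-\theta)/(q-\epsilon)+\theta/(q+\epsilon)$ then yields the $L^{q,r}_{w}$ bound. Your extra care in splitting ${\bf f}$ to define the solution on $L^{q,r}_{w}$ and in tracking that $\epsilon$, $\delta$, and $C$ depend only on the stated parameters is a welcome but minor elaboration of the paper's argument.
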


Another corollary of the weighted $L^{q}$ estimate is an estimate in (unweighted) Lorentz-Morrey spaces, which cannot be deduced by the usual means of interpolation.   
Given $0 < r \leq \infty$, $0 < q < \infty$ and $\theta\in (0, n]$ the Lorentz-Morrey space $\mathcal{L}^{q, r;\theta}(\Omega)$ is the set of measurable functions $g$ such that 
\[
\| g\|_{\mathcal{L}^{q, r;\theta}(\Omega)} := \sup_{\stackrel{0< \rho \leq \text{diam}(\Omega)}{z\in \Omega}} \rho^{\frac{\theta- n}{q} } \| g\|_{L^{q, r}(B_{\rho}(z)\cap \Omega)}  < +\infty,
\]
where the norm $\|\cdot\|_{L^{q, r}}$ is the Lorentz quasinorm corresponding to the Lebesgue measure.  When $\theta = n$, the space  $\mathcal{L}^{q, r;\theta}(\Omega)$ is the standard Lorentz space and is denoted by  ${L}^{q, r}(\Omega)$. When $q = r$,  the space $\mathcal{L}^{q, r;\theta}(\Omega)$ is the  usual Morrey space and is denoted by  $\mathcal{L}^{q;\theta}(\Omega)$. 
\begin{theorem}\label{Lorentz-Morrey}
 Suppose that $\mathfrak{K}>0$, $0< r \leq \infty$, $1 < q < \infty$ and $0 < \theta\leq n$. Suppose also that $\mathbb{A}$ satisfies \eqref{meas-symm} and \eqref{eq-Ellipticity}. Then there exists $\delta=\delta(\lambda,
\Lambda, q, n,  \theta)> 0$ such that for any $\mathbb{A}$ that is $(\delta, \mathfrak{K})$-BMO and $\Omega$ a $(\delta, \mathfrak{K})$-Lip domain, and any  ${\bf f}\in \mathcal{L}^{q, r; \theta}(\Omega;\mathbb{R}^{n})$, there exists a unique weak solution $u$ to \eqref{basic-pde} such that  
 $\nabla u\in \mathcal{L}^{q, r;\theta}(\Omega;\mathbb{R}^{n})$  and satisfies the continuity estimate   
\begin{equation*}
\|\nabla u\|_{\mathcal{L}^{q, r; \theta}(\Omega)} \leq C \| {\bf f}\|_{\mathcal{L}^{q, r;\theta}(\Omega)}. 
\end{equation*}
The constant $C$   depends only on $\lambda,\Lambda, n, q, r, \theta$ and $\text{diam}(\Omega)/\mathfrak{K}$. 
\end{theorem}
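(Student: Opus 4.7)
The existence and uniqueness of a weak solution $u\in W^{1,p}_{0}(\Omega)$ for some $p>1$ is an immediate consequence of Theorem \ref{maintheorem}: the Lorentz--Morrey norm controls the unweighted $L^{q}$ norm through the trivial embedding $\mathcal{L}^{q,r;\theta}(\Omega)\hookrightarrow L^{q,r}(\Omega)\hookrightarrow L^{q}(\Omega)=L^{q}_{w}(\Omega)$ with $w\equiv 1\in A_{q}$. Hence the content of the theorem is the continuity estimate, which I would extract from Corollary \ref{weightedLorentz} through a suitable choice of weight adapted to each ball.

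The main idea is, for each pair $(z,\rho)\in\Omega\times(0,\text{diam}(\Omega)]$ on which one wishes to bound $\nabla u$, to introduce the truncated power weight
\[
w_{z,\rho}(x):=\min\bigl\{1,\,(\rho/|x-z|)^{\gamma}\bigr\},
\]
for a fixed exponent $\gamma\in(n-\theta,\,n)$; the assumption $\theta>0$ makes this interval nonempty. Since $w_{z,\rho}(x)=w_{0,1}((x-z)/\rho)$ and $w_{0,1}(\cdot)=\min(1,|\cdot|^{-\gamma})$ is the minimum of two $A_{1}$ weights (using $\gamma<n$), the family $\{w_{z,\rho}\}$ lies uniformly in $A_{1}\subset A_{q}$ with $A_{q}$-constant depending only on $n,\gamma,q$. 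Consequently, Corollary \ref{weightedLorentz} yields some $\delta=\delta(\lambda,\Lambda,q,n,r,\theta)>0$ (depending on $\theta$ through the choice of $\gamma$) and a constant $C$, independent of $(z,\rho)$, such that
\[
\|\nabla u\|_{L^{q,r}_{w_{z,\rho}}(\Omega)}\ \leq\ C\,\|{\bf f}\|_{L^{q,r}_{w_{z,\rho}}(\Omega)}.
\]
Because $w_{z,\rho}\equiv 1$ on $B_{\rho}(z)$, the left-hand side dominates the unweighted quantity $\|\nabla u\|_{L^{q,r}(B_{\rho}(z)\cap\Omega)}$; thus the theorem reduces to showing $\|{\bf f}\|_{L^{q,r}_{w_{z,\rho}}(\Omega)}\leq C\rho^{(n-\theta)/q}\|{\bf f}\|_{\mathcal{L}^{q,r;\theta}(\Omega)}$ uniformly in $(z,\rho)$.

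For this remaining step I would split $\Omega=(B_{\rho}(z)\cap\Omega)\cup\bigcup_{k\geq 0}A_{k}$, with $A_{k}=(B_{2^{k+1}\rho}(z)\setminus B_{2^{k}\rho}(z))\cap\Omega$. On each annulus, $w_{z,\rho}\leq 2^{-k\gamma}$ pointwise, while the definition of the Morrey norm gives $\|{\bf f}\chi_{A_{k}}\|_{L^{q,r}(\Omega)}\leq C\,(2^{k}\rho)^{(n-\theta)/q}\|{\bf f}\|_{\mathcal{L}^{q,r;\theta}}$. A short distribution-function computation then yields
\[
\|{\bf f}\chi_{A_{k}}\|_{L^{q,r}_{w_{z,\rho}}(\Omega)}\leq C\,2^{k(n-\theta-\gamma)/q}\,\rho^{(n-\theta)/q}\,\|{\bf f}\|_{\mathcal{L}^{q,r;\theta}(\Omega)},
\]
while the first piece on $B_{\rho}(z)$ is controlled by $C\rho^{(n-\theta)/q}\|{\bf f}\|_{\mathcal{L}^{q,r;\theta}(\Omega)}$ directly. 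Since $\gamma>n-\theta$, the geometric series in $k$ converges, and the pieces may be summed using the $r$-subadditivity of the Lorentz quasi-norm when $r\leq 1$ and its (Lorentz) triangle inequality when $r\geq 1$. Taking the supremum over $(z,\rho)$ completes the proof.

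The main obstacle is the careful verification that $w_{z,\rho}\in A_{1}$ uniformly in $(z,\rho)$ and the Lorentz-scale dyadic summation for the full range $0<r\leq\infty$; both are essentially routine but require attention, particularly in the quasi-normed regime $r<1$.
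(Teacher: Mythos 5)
Your proposal is correct and takes essentially the same route as the paper: the paper works with the weight $w_{z}(x)=\min\{|x-z|^{-n+\theta-\varepsilon},\rho^{-n+\theta-\varepsilon}\}$, which is exactly your $w_{z,\rho}$ with $\gamma=n-\theta+\varepsilon\in(n-\theta,n)$ up to the harmless normalizing factor $\rho^{-(n-\theta+\varepsilon)}$, invokes Corollary \ref{weightedLorentz} with the uniform $A_q$ bound, and quotes the proof of \cite[Theorem 2.3]{M-P2} for the dyadic-annuli estimate $\|{\bf f}\|_{L^{q,r}_{w_{z}}(\Omega)}\leq C\rho^{-\varepsilon r/q\cdot\frac{1}{r}}\ \cdot\ \rho^{0}\,\|{\bf f}\|_{\mathcal{L}^{q,r;\theta}(\Omega)}$ (i.e.\ the bound you derive by hand). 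The only minor caveat is that the correct dichotomy for summing the disjointly supported annular pieces in the Lorentz scale is $r\leq q$ versus $r\geq q$ (using additivity of distribution functions and Minkowski in $L^{r/q}(d\alpha/\alpha)$), not $r\leq 1$ versus $r\geq 1$; the geometric decay from $\gamma>n-\theta$ makes the sum converge in either case, so this does not affect the argument.
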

%
 \section{Preliminaries on Weighted Norm inequalities}\label{Norm-ineq}
  In this section we collect relevant norm inequalities related to the Muckenhoupt class $A_q$.  
  We begin with the very important property of Muckenhoupt weights, which  is the {\it reverse H\"{o}lder property} (see \cite[Theorem 9.2.2]{Gra}). 
  \begin{lemma}
  \label{reverse_holder}
    Let $M_{0} >0$, $1 <q<\infty$ and $w \in A_q$ such that $[w]_{A_{q}} \leq M_{0}$. Then 
    there exists constants $C = C(n,q, M_{0})>0$ and $\gamma = \gamma(n,q, M_{0})>0$ such that for every ball $B$, we have
   $$ \left( \frac{1}{|B|} \int_B w(x)^{1+\gamma} \ dx \right)^{\frac{1}{1+\gamma}} \leq \frac{C}{|B|} \int_B w(x) \ dx .$$
  \end{lemma}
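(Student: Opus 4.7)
The plan is to adapt the classical Calderón--Zygmund stopping-time argument, following the presentation in \cite[Theorem 9.2.2]{Gra}. Since passing between a ball and a comparable enclosing cube affects constants only, it suffices to prove the inequality on a cube $Q_0$. Fix such $Q_0$ and set $\lambda_0 := \langle w \rangle_{Q_0}$. For a constant $b>1$ to be determined, let $\lambda_k := b^k \lambda_0$, and at each level $\lambda_k$ perform the Calderón--Zygmund decomposition of $w\chi_{Q_0}$, producing a disjoint maximal family of dyadic subcubes $\{Q_k^j\}_j$ with
\[
\lambda_k < \langle w \rangle_{Q_k^j} \leq 2^n \lambda_k \qquad \text{and}\qquad w \leq \lambda_k \text{ a.e.\ off } \bigcup_j Q_k^j.
\]
Nesting as $k$ increases puts each $Q_{k+1}^i$ inside a unique $Q_k^j$, and the size bound at level $\lambda_{k+1}$ gives
\[
\Bigl|\bigcup_{i:\, Q_{k+1}^i \subset Q_k^j} Q_{k+1}^i\Bigr| \leq \frac{w(Q_k^j)}{\lambda_{k+1}} \leq \frac{2^n}{b}\, |Q_k^j|.
\]

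The next step is to convert this Lebesgue-measure contraction into a $w$-measure contraction via the $A_q$ condition. A one-line Hölder argument with the dual weight $\sigma := w^{-1/(q-1)}$ yields
\[
\Bigl(\frac{|E|}{|Q|}\Bigr)^q \leq [w]_{A_q}\, \frac{w(E)}{w(Q)} \qquad \text{for all } E \subset Q,
\]
which, applied to complements, gives $w(F)/w(Q) \leq 1 - (1 - |F|/|Q|)^q/[w]_{A_q}$ for every $F \subset Q$. Applied to $F = \bigcup_{i} Q_{k+1}^i \cap Q_k^j$ inside $Q = Q_k^j$, this produces a uniform contraction factor $\theta := 1 - (1 - 2^n/b)^q/M_0$. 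Choosing $b = b(n,q,M_0)$ large enough so that $\theta < 1$, summing over $j$, and iterating in $k$ yields the geometric decay
\[
w\bigl(\{w > \lambda_k\} \cap Q_0\bigr) \;\leq\; w\Bigl(\bigcup_j Q_k^j\Bigr) \;\leq\; \theta^k\, w(Q_0).
\]

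A layer-cake computation then finishes the argument:
\[
\int_{Q_0} w^{1+\gamma}\, dx \;\leq\; \lambda_0^\gamma w(Q_0) + (b^\gamma - 1)\lambda_0^\gamma w(Q_0) \sum_{k \geq 0} (\theta b^\gamma)^k.
\]
Provided $\gamma > 0$ is taken small enough (depending on $n$, $q$, $M_0$) that $\theta b^\gamma < 1$, the geometric series converges, and since $\lambda_0^\gamma w(Q_0) = \langle w \rangle_{Q_0}^{1+\gamma}|Q_0|$, one obtains the bound $\langle w^{1+\gamma} \rangle_{Q_0}^{1/(1+\gamma)} \leq C(n,q,M_0)\, \langle w \rangle_{Q_0}$ on cubes, and hence on balls by inscribing and circumscribing cubes of comparable size.

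The principal technical obstacle lies in the joint calibration of the parameters $b$, $\theta$, $\gamma$: the stopping ratio $b$ must be chosen large enough that the complementary form of the $A_q$ inequality produces a contraction factor $\theta$ strictly less than $1$, which is possible since $M_0 \geq 1$ gives $\theta \to 1 - 1/M_0 < 1$ as $b \to \infty$; once $\theta$ is fixed, $\gamma$ must then be taken sufficiently small that $\theta b^\gamma < 1$. Because each of $b$, $\theta$, and $\gamma$ can be chosen as a function of $n$, $q$, and $M_0$ alone, this yields the advertised dependence of $C$ and $\gamma$ on exactly these parameters.
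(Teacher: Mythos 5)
Your argument is correct: the Calder\'on--Zygmund stopping-time decomposition at levels $b^k\langle w\rangle_{Q_0}$, the conversion of the Lebesgue-measure contraction into a $w$-measure contraction via the complementary form of the $A_q$ inequality, and the final layer-cake summation with $\gamma$ chosen so that $\theta b^\gamma<1$ all check out, with every parameter depending only on $n,q,M_0$. The paper does not prove this lemma but simply cites \cite[Theorem 9.2.2]{Gra}, and your proof is essentially the standard stopping-time argument given there, so the two approaches coincide.
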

  Note in particular that $w\in L^{1 + \gamma}_{loc}(\mathbb{R}^{n})$. 
  A nontrivial result that follows from the  reverse H\"{o}lder property of Muckenhoupt weights is the  ``open-ended property''  (see e.g, \cite[Corollary 9.2.6]{Gra})  and will be very useful in proving our main theorem. 
  \begin{lemma}
  \label{open_ended}
   Let $M_{0} >0$, $1 <q<\infty$ and $w \in A_q$ such that $[w]_{A_{q}} \leq M_{0}$. Then there exist a constant $q_0  =q_0 (n,q, M_{0})\in (1, q)$ and a constant $C = C(n, q, M_{0})$ such that $w \in A_{q_0}$ satisfying the estimate $$[w]_{A_{q_0}} \leq C .$$
   In particular, this says that $A_q = \bigcup_{1\leq q_0<q} A_{q_0}$. 
  \end{lemma}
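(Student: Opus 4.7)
The plan is to exploit the standard duality for Muckenhoupt weights, which swaps the role of $w$ and $w^{-1/(q-1)}$, and then to apply the reverse H\"older property of Lemma \ref{reverse_holder} to the dual weight. Specifically, set $\sigma := w^{-1/(q-1)}$. A direct unraveling of the definition shows that $\sigma$ lies in $A_{q'}$, where $q'=q/(q-1)$, and more precisely, for every ball $B \subset \RR^n$,
\[
\left(\fint_B \sigma\, dx\right)\!\left(\fint_B \sigma^{-1/(q'-1)}\, dx\right)^{q'-1}
=\left[\left(\fint_B w\, dx\right)\!\left(\fint_B w^{-1/(q-1)}\, dx\right)^{q-1}\right]^{1/(q-1)}\!\!,
\]
so that $[\sigma]_{A_{q'}} = [w]_{A_q}^{1/(q-1)} \leq M_0^{1/(q-1)}$.

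Next I would apply Lemma \ref{reverse_holder} to $\sigma \in A_{q'}$: this furnishes constants $\gamma = \gamma(n,q,M_0) > 0$ and $C = C(n,q,M_0) > 0$ such that
\[
\left(\fint_B \sigma^{1+\gamma}\,dx\right)^{\frac{1}{1+\gamma}} \leq C\, \fint_B \sigma\, dx
\]
for every ball $B$. The key calibration is now to choose
\[
q_0 := 1 + \frac{q-1}{1+\gamma},
\]
so that $1 < q_0 < q$ and the exponent arranges itself correctly, namely $-1/(q_0-1) = -(1+\gamma)/(q-1)$, which gives $w^{-1/(q_0-1)} = \sigma^{1+\gamma}$, and also $(q_0-1)(1+\gamma) = q-1$.

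With this choice the $A_{q_0}$ constant of $w$ is controlled as follows: for any ball $B$,
\[
\left(\fint_B w\,dx\right)\!\left(\fint_B w^{-\frac{1}{q_0-1}}\,dx\right)^{q_0-1}
= \left(\fint_B w\,dx\right)\!\left(\fint_B \sigma^{1+\gamma}\,dx\right)^{q_0-1},
\]
and then the reverse H\"older estimate yields
\[
\left(\fint_B \sigma^{1+\gamma}\,dx\right)^{q_0-1}
\leq C^{(q_0-1)(1+\gamma)}\!\left(\fint_B \sigma\,dx\right)^{(q_0-1)(1+\gamma)}
= C^{q-1}\!\left(\fint_B w^{-1/(q-1)}\,dx\right)^{q-1}\!\!,
\]
so that $[w]_{A_{q_0}} \leq C^{q-1}[w]_{A_q} \leq C^{q-1} M_0$, which is the desired bound. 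The ``in particular'' statement follows by taking the union over $q$.

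I do not anticipate a serious obstacle: the argument is essentially algebraic, and the only nontrivial input is the reverse H\"older inequality, which is exactly Lemma \ref{reverse_holder}. The slight care needed is in verifying the duality step $[w]_{A_q}\leftrightarrow [\sigma]_{A_{q'}}$ (a direct computation) and in keeping track of the fact that the constants produced depend only on $n$, $q$, and $M_0$, not on the particular weight $w$.
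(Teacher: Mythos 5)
Your proof is correct. Note that the paper itself does not prove this lemma at all -- it simply cites \cite[Corollary 9.2.6]{Gra} -- so there is no internal argument to compare against; what you have written is the standard derivation (and essentially the one in the cited reference): pass to the dual weight $\sigma=w^{-1/(q-1)}$, observe $[\sigma]_{A_{q'}}=[w]_{A_q}^{1/(q-1)}\leq M_0^{1/(q-1)}$, apply the reverse H\"older inequality of Lemma \ref{reverse_holder} to $\sigma$ (with constants depending only on $n$, $q'$ and $M_0^{1/(q-1)}$, hence only on $n,q,M_0$), and recalibrate the exponent via $q_0=1+\frac{q-1}{1+\gamma}$ so that $w^{-1/(q_0-1)}=\sigma^{1+\gamma}$ and $(q_0-1)(1+\gamma)=q-1$. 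All the identities you use check out, and the resulting bound $[w]_{A_{q_0}}\leq C^{q-1}[w]_{A_q}\leq C^{q-1}M_0$ has exactly the claimed uniform dependence. The only cosmetic point is the final remark: the identity $A_q=\bigcup_{1\leq q_0<q}A_{q_0}$ needs the union taken over $q_0$ (not $q$) together with the elementary nesting $A_{q_0}\subset A_q$ for $q_0<q$, which follows from H\"older's inequality; this is immediate but worth stating precisely.
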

  A simple H\"older's inequality yields the following useful result. 
  \newcommand{\bart}{\overline{\mathfrak{t}}}
  \newcommand{\tbar}{\underline{\mathfrak{t}}}
  \begin{corollary}\label{Lebesgue-vs-weighted}
  Let $w\in A_{q}$, $q>1$. Denote $\overline{\mathfrak{t}}= q\bigg(1 + \frac{1}{\gamma}\bigg) $ and $p= \frac{q}{q_{0}} > 1$ where $\gamma >0$ and $q_{0} > 1$ are numbers guaranteed by Lemma \ref{reverse_holder} and Lemma \ref{open_ended}, respectively. Then we have that $L^{\bart}(\Omega) \subset L^{q}_{w}(\Omega) \subset L^{p}(\Omega)
$  and 
  \[
  W^{1,\bart }_{0}(\Omega) \subset W^{1, q}_{w, 0}(\Omega) \subset W^{1, p}_{0}(\Omega). 
  \]
  \end{corollary}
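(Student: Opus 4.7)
The plan is to verify the Lebesgue-space inclusions by two applications of H\"older's inequality exploiting respectively the reverse H\"older property (Lemma \ref{reverse_holder}) and the open-ended property (Lemma \ref{open_ended}), and then lift these to the Sobolev level by combining them with the definition of $W^{1,q}_{w,0}$ as a completion of $C_c^\infty(\Omega)$.

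For the first inclusion $L^{\bart}(\Omega)\subset L^{q}_{w}(\Omega)$, I would write $|f|^q w = |f|^q\cdot w$ and apply H\"older with exponents $s=1+1/\gamma$ and $s'=1+\gamma$:
\[
\int_\Omega |f|^q w\,dx \leq \left(\int_\Omega |f|^{qs}\,dx\right)^{1/s}\left(\int_\Omega w^{1+\gamma}\,dx\right)^{1/s'}.
\]
Since $qs = q(1+1/\gamma)=\bart$ and $\int_\Omega w^{1+\gamma}\,dx$ is finite (covering $\Omega$ by a finite collection of balls and invoking Lemma \ref{reverse_holder}), this yields
\[
\|f\|_{L^q_w(\Omega)} \leq C\,\|f\|_{L^{\bart}(\Omega)}.
\]
For the second inclusion $L^{q}_{w}(\Omega)\subset L^{p}(\Omega)$ with $p=q/q_0$, I would use $w\in A_{q_0}$ to split $|f|^p = |f|^p w^{p/q}\, w^{-p/q}$ and apply H\"older with exponents $q/p=q_0$ and $(q/p)'=q_0/(q_0-1)$:
\[
\int_\Omega |f|^p\,dx \leq \left(\int_\Omega |f|^q w\,dx\right)^{p/q}\left(\int_\Omega w^{-1/(q_0-1)}\,dx\right)^{(q_0-1)/q_0}.
\]
The second factor is finite because the $A_{q_0}$ condition forces $w^{-1/(q_0-1)}\in L^1_{\rm loc}(\mathbb{R}^n)$ and $\Omega$ is bounded, giving $\|f\|_{L^p(\Omega)}\leq C\,\|f\|_{L^q_w(\Omega)}$.

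To obtain the Sobolev chain, observe that the pointwise inclusions just proved apply to both a function and its distributional gradient, so as set inclusions with continuous embeddings one has $W^{1,\bart}(\Omega)\subset W^{1,q}_w(\Omega)\subset W^{1,p}(\Omega)$. To pass to the zero-trace spaces, I would use that $C_c^\infty(\Omega)$ is dense in each of $W^{1,\bart}_0(\Omega)$ and $W^{1,q}_{w,0}(\Omega)$ (the latter by construction, and because $w\in A_q$ ensures the weighted space behaves well): given $u\in W^{1,\bart}_0(\Omega)$, take $\varphi_k\in C_c^\infty(\Omega)$ with $\varphi_k\to u$ in $W^{1,\bart}$; by the Lebesgue inclusion applied to $u-\varphi_k$ and $\nabla(u-\varphi_k)$, the sequence is Cauchy in $W^{1,q}_w$, hence $u\in W^{1,q}_{w,0}(\Omega)$. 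The inclusion $W^{1,q}_{w,0}(\Omega)\subset W^{1,p}_0(\Omega)$ is obtained analogously.

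The only nontrivial ingredients are Lemmas \ref{reverse_holder} and \ref{open_ended}, which are already in hand; everything else is bookkeeping with H\"older's inequality and density, so I do not anticipate any real obstacle.
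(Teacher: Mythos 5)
Your proposal is correct and follows exactly the route the paper intends: the paper justifies this corollary only by the remark that it follows from a simple application of H\"older's inequality, using Lemma \ref{reverse_holder} (local $L^{1+\gamma}$ integrability of $w$) for the first inclusion and Lemma \ref{open_ended} ($w\in A_{q_0}$, hence $w^{-1/(q_0-1)}\in L^1_{loc}$) for the second, with the Sobolev chain following by density as you describe. Your exponent bookkeeping checks out, so there is nothing to add.
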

  Weights in the $A_{q}$ class are intimately related to the {\it Hardy-Littlewood maximal function} which is defined for  a function $f \in L^1_{loc}(\RR^n)$,
   as  
 \begin{equation*}
  \mm f(x) = \sup_{r>0} \frac{1}{|B_r(x)|}\int_{B_r(x)} |f(y)| \ dy.  
 \end{equation*}
The well known result on the necessary and sufficient condition for the boundedness of the maximal function on weighted $L^{p}$ spaces, \cite{M, Gra}, is now stated.  
  \begin{lemma}
   \label{weighted_maximal}
   Let $M_{0} >0$, $1 <q<\infty$ and $w \in A_q$ such that $[w]_{A_{q}} \leq M_{0}$. 
   Then there exists a constant $C=  C(n,q,M_{0})$ such that 
   \begin{equation}
    \label{maximal_estimate_weight}
    \| \mm f \|_{L^q_w(\RR^n)} \leq C \| f \|_{L^q_w(\RR^n)}
   \end{equation}
   for all $f \in L^q_w(\RR^n)$. Conversely if \eqref{maximal_estimate_weight} holds for all $f \in L^q_w(\RR^n)$, then necessarily $w \in A_q$. 
  \end{lemma}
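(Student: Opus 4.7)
The plan is to prove the two implications separately, handling the forward direction first via weak-type interpolation and the converse via testing against a carefully chosen function.

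For the sufficient direction, my first step is to extract from the $A_q$ condition its Hölder-type reformulation: for any ball $B$ and nonnegative $f$,
\[
\left(\fint_B |f|\,dy\right)^{q} \leq \frac{[w]_{A_q}}{w(B)} \int_B |f|^{q}\,w\,dy,
\]
which follows from Hölder's inequality applied to $|f| = |f| w^{1/q} \cdot w^{-1/q}$ and the definition of $[w]_{A_q}$. Combined with a Vitali covering of the level set $\{\mathfrak{M} f > t\}$ and the doubling property of $A_q$ weights, this yields the weak-type bound
\[
w(\{x : \mathfrak{M}f(x) > t\}) \leq \frac{C(n,q,[w]_{A_q})}{t^{q}} \int_{\RR^n} |f|^{q} w\,dx.
\]

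To upgrade to the strong-type $(q,q)$ bound, I would invoke Lemma \ref{open_ended} to obtain some $q_{0}\in (1,q)$ with $w\in A_{q_{0}}$ and $[w]_{A_{q_{0}}} \leq C(n,q,M_{0})$. The previous step applied to this exponent gives the weak-type bound at level $q_{0}$ on the measure space $(\RR^n, w\,dx)$, while $\mathfrak{M}$ is trivially bounded from $L^{\infty}$ to $L^{\infty}$. Marcinkiewicz interpolation between $q_{0}$ and $\infty$ then delivers the strong-type $(q,q)$ inequality \eqref{maximal_estimate_weight} with a constant depending only on $n$, $q$, and $M_{0}$.

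For the converse, suppose \eqref{maximal_estimate_weight} holds. Fix a ball $B$, let $\epsilon >0$, and test the inequality against $f := \chi_{B}\,\sigma_{\epsilon}$ with $\sigma_{\epsilon} := (w+\epsilon)^{-1/(q-1)}$. Since $\mathfrak{M}f(x) \geq \fint_{B} \sigma_{\epsilon}\,dy$ for every $x\in B$, the boundedness assumption yields
\[
\left(\fint_{B}\sigma_{\epsilon}\,dy\right)^{q} w(B) \leq C \int_{B} \sigma_{\epsilon}^{q}\,w\,dx \leq C\int_{B}\sigma_{\epsilon}\,dx,
\]
where in the last step I used $w \leq w+\epsilon$ to obtain $\sigma_{\epsilon}^{q}w \leq \sigma_{\epsilon}$. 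Dividing by $\int_{B}\sigma_{\epsilon}\,dx$ and rearranging yields
\[
\left(\fint_{B}\sigma_{\epsilon}\,dy\right)^{q-1}\fint_{B} w\,dx \leq C,
\]
and monotone convergence as $\epsilon \downarrow 0$ produces the $A_{q}$ bound uniformly in $B$.

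The main obstacle is the weak-type step, which is the only place where the $A_{q}$ hypothesis enters nontrivially: it requires simultaneously the Hölder rearrangement of the $A_{q}$ definition and the doubling of $w$, and these must mesh with the Vitali selection so that the covering constant does not spoil the bound on $w(\{\mathfrak{M}f>t\})$. The strong-type upgrade and the necessity proof are essentially formal once this cornerstone and Lemma \ref{open_ended} are in hand.
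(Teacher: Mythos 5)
Your proposal is correct, but there is nothing in the paper to compare it against: the paper does not prove Lemma \ref{weighted_maximal}, it quotes Muckenhoupt's classical theorem with a citation to the original paper and to Grafakos. What you wrote is essentially the standard proof from those references, and it is sound. The H\"older rewriting of the $A_q$ condition, $\bigl(\fint_B |f|\,dy\bigr)^q \leq [w]_{A_q}\, w(B)^{-1}\int_B |f|^q w\,dy$, together with a Vitali selection and the doubling bound $w(2B)\leq 2^{nq}[w]_{A_q}\,w(B)$ (which you invoke implicitly to pass from the dilated covering balls back to the disjoint ones) gives the weak $(q,q)$ estimate with constant controlled by $n,q,[w]_{A_q}$; the self-improvement $w\in A_{q_0}$ with $q_0=q_0(n,q,M_0)<q$ from Lemma \ref{open_ended} — which rests on the reverse H\"older inequality of Lemma \ref{reverse_holder} and not on this lemma, so there is no circularity — combined with Marcinkiewicz interpolation against the trivial $L^\infty$ bound yields \eqref{maximal_estimate_weight} with $C=C(n,q,M_0)$, since the gap $q-q_0$ depends only on $n,q,M_0$. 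The necessity argument, testing against $\chi_B(w+\epsilon)^{-1/(q-1)}$ (which lies in $L^q_w$ because it is bounded and compactly supported) and letting $\epsilon\downarrow 0$ by monotone convergence, is also the classical one. One harmless detail: the paper's $\mm$ is the centered maximal operator, so for $x\in B$ you only get $\mm f(x)\geq 2^{-n}\fint_B (w+\epsilon)^{-1/(q-1)}\,dy$ (use the ball of twice the radius of $B$ centered at $x$); this changes only the constant in the resulting $A_q$ bound, not the conclusion.
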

We will also need a truncated version of the {\it Fefferman-Stein sharp maximal function} that is defined for each $\rho>0$ by 
\begin{equation*}
  \mm^{\#}_{\rho} f(x) = \sup_{r\in (0, \rho]} \frac{1}{|B_r(x)|}\int_{B_r(x)} |f(y) - \avg{f}{B_r(x)}| \ dy. 
 \end{equation*}
 It is easy to see from these definitions and Lebesgue differentiation theorem that 
  \begin{equation*}
   f \leq  \mm f \qquad \text{and}\qquad \mm_{\rho}^{\#} f \leq  2 \ \mm f. 
  \end{equation*}

Our use of $\mm_{\rho}^{\#}$ lies in the following  key estimate that bounds the weighted $L^{q}$ norm of a function by the weighted norm of its truncated sharp maximal function. This lemma was obtained in \cite[Corollary 2.7]{Ph2}; see also 
\cite{KZ1} and \cite{FS} for the unweighted case. 
  \begin{lemma}[\cite{Ph2}]
  \label{lemma_sharp_phuc}
  Let $M_{0} >0$, $1 <q<\infty$ and $w \in A_q$ such that $[w]_{A_{q}} \leq M_{0}$.   Then there exist a constant $\kappa = \kappa(n,q,M_{0})>\sqrt{n}$ and $C = C(n,q, M_{0})>0$ such that for $f \in L^q(\RR^n)$ or $f\in L^q_{w}(\RR^n)$ with $\textrm{spt}(f) \subset B_\rho(x_0)$ for some $\rho>0$, we have the estimate
   \begin{equation*}
    \int_{B_\rho(x_0)} |f(x)|^q w(x) \ dx \leq C\int_{B_{\kappa \rho}(x_0)} [\mm^{\#}_{\kappa \rho}f(x)]^q w(x) \ dx.
   \end{equation*}
  \end{lemma}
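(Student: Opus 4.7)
The plan is to follow the classical Fefferman--Stein good-$\lambda$ scheme, adapted to the localized/truncated setting and upgraded to weights via the reverse H\"older property of $A_q$ weights (Lemma \ref{reverse_holder}). Since $f$ is compactly supported in $B_\rho(x_0)$, the truncation at scale $\kappa\rho$ is natural: if $\kappa$ is chosen sufficiently large (at least $>\sqrt{n}$ and dependent on $n,q,M_0$), then all balls of radius up to $\kappa\rho$ used in the local Calder\'on--Zygmund stopping-time decomposition centered near $B_\rho(x_0)$ will still be captured by the truncated sharp maximal function $\mm^{\#}_{\kappa\rho}f$.

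First, I would introduce the truncated Hardy--Littlewood maximal function $\mm_{\kappa\rho}$ and, using that $\textrm{spt}(f)\subset B_\rho(x_0)$ together with Lebesgue differentiation, establish the pointwise bound $|f(x)|^q \leq \mm_{\kappa\rho}(|f|^q)(x)$ a.e.\ on $B_\rho(x_0)$, so the task reduces to controlling $\mm_{\kappa\rho}(|f|^q)$ in $L^1_w(B_{\kappa\rho}(x_0))$ by $[\mm^{\#}_{\kappa\rho}f]^q$ in the same space. Next, via a dyadic/CZ decomposition of the super-level sets of $\mm_{\kappa\rho}(|f|^q)$ localized to an enlarged ball, I would derive a Lebesgue-measure good-$\lambda$ inequality of the form
\[
\bigl|\{x\in B_{\kappa\rho}(x_0):\mm_{\kappa\rho}(|f|^q)(x)>N\lambda,\,[\mm^{\#}_{\kappa\rho}f(x)]^q\leq\epsilon\lambda\}\bigr|\leq C\epsilon\,\bigl|\{x\in B_{\kappa\rho}(x_0):\mm_{\kappa\rho}(|f|^q)(x)>\lambda\}\bigr|,
\]
for suitable absolute $N,C$ and arbitrary $\epsilon>0$, where on each CZ stopping ball $B$ one uses $\tfrac{1}{|B|}\int_B |f|^q\leq N\lambda$ together with the defining inequality of $\mm^{\#}_{\kappa\rho}f$ at the parent ball.

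Then I would upgrade this Lebesgue good-$\lambda$ to a weighted one. Here the $A_q$ condition with constant $\leq M_0$ combined with the reverse H\"older inequality (Lemma \ref{reverse_holder}) gives the standard implication $|E|\leq \eta\,|B|$ $\Rightarrow$ $w(E)\leq C\eta^{\gamma/(1+\gamma)}w(B)$, which converts the above bound into
\[
w(\{\mm_{\kappa\rho}(|f|^q)>N\lambda,\,[\mm^{\#}_{\kappa\rho}f]^q\leq\epsilon\lambda\})\leq C\epsilon^{\sigma}\,w(\{\mm_{\kappa\rho}(|f|^q)>\lambda\})
\]
for some $\sigma=\sigma(n,q,M_0)>0$. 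A layer-cake integration in $\lambda$, choosing $\epsilon$ small enough so that $C\epsilon^\sigma N^1<1$ (absorbing the relevant term on the left, after a standard truncation argument to justify finiteness), yields
\[
\int_{B_{\kappa\rho}(x_0)}\mm_{\kappa\rho}(|f|^q)\,w\,dx \leq C\int_{B_{\kappa\rho}(x_0)}[\mm^{\#}_{\kappa\rho}f]^q\,w\,dx,
\]
and combining with the pointwise bound from the first step finishes the argument.

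The main obstacle is the first step: producing a genuinely localized good-$\lambda$ inequality in which the stopping cubes never probe outside $B_{\kappa\rho}(x_0)$ and in which comparisons are always made against $\mm^{\#}_{\kappa\rho}f$ rather than the full $\mm^{\#}f$. This forces a careful choice of the constant $\kappa>\sqrt{n}$ to ensure that the parent of every CZ cube still fits inside a ball of radius $\leq \kappa\rho$ around points of the cube, and that the initial super-level sets are contained in $B_{\kappa\rho}(x_0)$ thanks to $\textrm{spt}(f)\subset B_\rho(x_0)$. A secondary technical point is justifying the initial finiteness $\|\mm_{\kappa\rho}(|f|^q)\|_{L^1_w}<\infty$, for which I would either work with a truncated $\min\{\mm_{\kappa\rho}(|f|^q),T\}$ and let $T\to\infty$ at the end, or directly invoke the hypothesis $f\in L^q_w\cup L^q$ together with Lemma \ref{weighted_maximal} on an enlarged ball.
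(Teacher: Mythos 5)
The paper itself does not prove this lemma; it quotes it from \cite[Corollary 2.7]{Ph2}, so the benchmark is the localized weighted Fefferman--Stein argument proved there. Your overall architecture (localized Calder\'on--Zygmund stopping time, then upgrading Lebesgue-measure smallness to $w$-measure smallness via $[w]_{A_q}\leq M_0$ and reverse H\"older, then a layer-cake integration with an absorption justified by truncation) is the right template, and your weighted step $|E|\leq \eta|B|\Rightarrow w(E)\leq C\eta^{\gamma/(1+\gamma)}w(B)$ is correct. The genuine gap is the central good-$\lambda$ inequality you propose, which compares $\mm_{\kappa\rho}(|f|^q)$ at level $N\lambda$ with $[\mm^{\#}_{\kappa\rho}f]^q$ at level $\epsilon\lambda$ and claims a factor $C\epsilon$: this is false for $q>1$. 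On a stopping ball $B$ the hypothesis $[\mm^{\#}_{\kappa\rho}f(\bar x)]^q\leq\epsilon\lambda$ only gives the \emph{first-power} oscillation bound $\fint_{2B}|f-\avg{f}{2B}|\,dy\leq(\epsilon\lambda)^{1/q}$, whereas the weak $(1,1)$ step you invoke would need $\fint_{2B}|f-\avg{f}{2B}|^{q}\,dy\lesssim\epsilon^{\sigma}\lambda$; an $L^1$ mean oscillation does not control an $L^q$ mean oscillation on a single ball. Concretely, take $f=A\chi_{E}$ with $E$ a small ball of measure $a$: then $\mm(|f|^q)(x)\approx A^{q}a|x|^{-n}$ while $[\mm^{\#}f(x)]^{q}\approx A^{q}a^{q}|x|^{-nq}$ away from $E$, and for $\lambda\approx A^{q}\epsilon^{1/(q-1)}N^{-q/(q-1)}$ the set $\{\mm(|f|^q)>N\lambda,\ [\mm^{\#}f]^q\leq\epsilon\lambda\}$ has measure comparable to $\tfrac{1}{N}|\{\mm(|f|^q)>\lambda\}|$, uniformly as $\epsilon\to0$; no bound $C\epsilon^{\sigma}|\{\mm(|f|^q)>\lambda\}|$ can hold. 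So the step ``derive a Lebesgue-measure good-$\lambda$ inequality'' in the form you wrote would fail, and everything downstream collapses.

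The repair is the classical one, and it is essentially what \cite{Ph2} localizes: run the good-$\lambda$ at matching \emph{first} powers, i.e.\ between a (localized or dyadic) maximal function of $f$ itself and $\mm^{\#}_{\kappa\rho}f$, namely $w(\{\mm f>N\lambda,\ \mm^{\#}_{\kappa\rho}f\leq\epsilon\lambda\})\leq C\epsilon^{\sigma}\,w(\{\mm f>\lambda\})$, and only afterwards integrate against $q\lambda^{q-1}\,d\lambda$ to produce the $L^q_w$ estimate; the power $q$ must never enter the stopping-time comparison. You should also address a localization point your outline passes over: in the truncated setting the good-$\lambda$/base-level analysis produces an additive term involving $\fint_{B_{\kappa\rho}(x_0)}|f|\,dy$, and it is precisely the support condition $\mathrm{spt}(f)\subset B_{\rho}(x_0)$ together with $\kappa$ chosen large (so that $f$ vanishes on a definite fraction of $B_{\kappa\rho}(x_0)$) that lets this average be dominated by $\mm^{\#}_{\kappa\rho}f$ pointwise on $B_{\kappa\rho}(x_0)$ and absorbed into the right-hand side.
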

  
\section{Proof of the main result based on {\em a priori} estimates of Caccioppoli type} \label{a priori}
We begin by reiterating the point that given ${\bf f}\in L^{q}_{w}(\Omega;\mathbb{R}^{n})$, for $1 < q < \infty$, there is a corresponding unique solution to \eqref{basic-pde}. Indeed, using $p>1$ in Corollary \ref{Lebesgue-vs-weighted} 
a simple application of H\"older's inequality shows that  ${\bf f} \in L^{p}(\Omega;\mathbb{R}^{n})$  with the estimate
\begin{equation}\label{holder_weights}
\left(\fint_{B}|{\bf f}|^{p}\ dx\right)^{1/p }\\
 \leq C([w]_{A_q})   \left(\frac{1}{w(B)}\int_{B}|{\bf f}|^{q} w \ dx \right)^{1/q},
\end{equation}
where $B$ is any ball that contains $\Omega$ and ${\bf f}$ is set to be zero outside $\Omega$.
Next we apply  \cite[Theorem 1.5]{B1} to find constants $C$ and $\delta$, and a unique weak solution $u \in W^{1, p}_{0}(\Omega)$ solving  \eqref{basic-pde} corresponding to ${\bf f}$ that satisfies the estimate 
\begin{equation}
 \label{byun_a_priori}
 \|\nabla u\|_{L^{p}(\Omega)} \leq C \|{\bf f}\|_{L^{p}(\Omega)}, 
\end{equation}
provided $\mathbb{A}$ is $(\delta, \mathfrak{K})$-BMO and $\Omega$ is $(\delta, \mathfrak{K})$-Lip. So when we say a solution to \eqref{basic-pde} corresponding to ${\bf f}$, we are referring to this solution by uniqueness. In passing, we note that even though it is not clearly stated in \cite[Theorem 1.5]{B1},  the dependence of the constant $C$ on the domain $\Omega$ is only through the ratio $\text{diam}(\Omega)/\mathfrak{K}$. For this we refer to the recent paper  \cite[Theorem 1.8]{M-P3} when applied to the linear problem.  

We will prove Theorem \ref{maintheorem} as a consequence of the following a priori estimates of Caccioppoli type estimates. 
\begin{theorem}\label{maintheorem-Cacciaopoli}
Suppose that  $\mathfrak{K}>0,$ $M_{0} >0$, $1 <q<\infty$ and $w \in A_q$ such that $[w]_{A_{q}} \leq M_{0}$. Suppose also that  $\mathbb{A}$ satisfies \eqref{meas-symm} and \eqref{eq-Ellipticity}. Then there exists  $\delta=\delta(\lambda, \Lambda, q, n,  M_{0})> 0$ such that for any $\mathbb{A}$ that is $(\delta, \mathfrak{K})$-BMO and $\Omega$ a $(\delta, \mathfrak{K})$-Lip domain, if ${\bf f}\in C_{c}^{\infty}(\Omega;\mathbb{R}^{n})$ then the corresponding weak  solution $u$ to \eqref{basic-pde}  satisfies 
\begin{equation}\label{stability-Cacciaopoli}
\int_{\Omega} |\nabla u|^q w(x) \ dx \leq C \int_{\Omega} \left(|{\bf f}|^{q} + |u/\mathfrak{K}|^{q}\right)w(x)\ dx. 
\end{equation}
The constant $C$ depends only on $\lambda, \Lambda, q, n, M_{0}$ and $\text{diam}(\Omega)/\mathfrak{K}$. 
\end{theorem}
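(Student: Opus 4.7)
My plan is to deduce Theorem \ref{maintheorem-Cacciaopoli} from the local interior estimate \eqref{local-int} (Theorem \ref{main-interior-local-estimate}) and its boundary counterpart (Theorem \ref{Lip-boundary-main}) via a standard finite covering argument on $\Omega$. The assumption ${\bf f}\in C_c^\infty(\Omega)$ guarantees via \eqref{byun_a_priori} and Corollary \ref{Lebesgue-vs-weighted} that $\nabla u\in L^{\bart}(\Omega)\subset L^q_w(\Omega)$ (choosing $\delta$ small enough that \eqref{byun_a_priori} is available at exponent $\bart$), so every integral appearing below is a priori finite and the summation argument is rigorously justified.

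First, I would fix $\delta=\delta(\lambda,\Lambda,q,n,M_0)>0$ small enough for both local estimates to apply and for the preceding qualitative integrability to hold, let $M>2$ be the geometric constant supplied by those estimates, and set the covering scale $d:=\mathfrak{K}/(2M)$. Classify points $x_0\in\overline{\Omega}$ as \emph{interior} (if $B_{Md}(x_0)\subset\Omega$, in which case \eqref{local-int} applies) or \emph{boundary} (if $\text{dist}(x_0,\partial\Omega)<Md$, in which case Theorem \ref{Lip-boundary-main} applies on $B_d(x_0)\cap\Omega$ and $B_{Md}(x_0)\cap\Omega$). In either case one obtains an estimate of the shape
\begin{equation*}
\int_{B_d(x_0)\cap\Omega}|\nabla u|^q w\,dx\leq C\int_{B_{Md}(x_0)\cap\Omega}\bigl(|{\bf f}|^q+|u/\mathfrak{K}|^q\bigr)w\,dx,
\end{equation*}
since the factor $d^{-q}$ arising from the $|u/d|^q$ term on the right of \eqref{local-int} is absorbed into the constant via $d\simeq\mathfrak{K}$.

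Second, I would apply the Vitali covering lemma to extract a finite subcover $\{B_d(x_i)\}_{i=1}^N$ of $\Omega$, with every $x_i$ either an interior or a boundary center, such that the enlarged balls $\{B_{Md}(x_i)\}$ have overlap bounded by a constant depending only on $n$. Since $d\simeq\mathfrak{K}$, the cardinality satisfies $N\leq C(n,\text{diam}(\Omega)/\mathfrak{K})$. Summing the local estimates and invoking bounded overlap then yields
\begin{equation*}
\int_\Omega|\nabla u|^q w\,dx\leq\sum_{i=1}^N\int_{B_d(x_i)\cap\Omega}|\nabla u|^q w\,dx\leq C\int_\Omega\bigl(|{\bf f}|^q+|u/\mathfrak{K}|^q\bigr)w\,dx,
\end{equation*}
which is \eqref{stability-Cacciaopoli} with the claimed dependence of the constant.

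The main obstacle of this program is not the covering step itself, which is essentially geometric once the local estimates are at hand, but rather the very availability of these local weighted bounds at a single scale $d\sim\mathfrak{K}$. Their proofs, carried out separately in Sections \ref{local interior} and \ref{local boundary}, rest on the truncated sharp maximal function estimate (Lemma \ref{lemma_sharp_phuc}) combined with new $L^p$ comparison estimates for arbitrarily small $p>1$ (Lemma \ref{comparison-estimate} in the interior and Lemma \ref{flat-comparison-estimate-lemma} at flat boundary), together with $C^{1,\alpha}$ regularity for the associated homogeneous constant-coefficient problems. Given those inputs, the covering argument above delivers Theorem \ref{maintheorem-Cacciaopoli}.
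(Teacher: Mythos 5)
Your proposal is correct and follows essentially the same route as the paper: the paper likewise deduces Theorem \ref{maintheorem-Cacciaopoli} by covering an interior region of $\Omega$ with balls of radius comparable to $\mathfrak{K}$ (handled by Theorem \ref{main-interior-local-estimate}) and a boundary layer with balls centered on $\partial\Omega$ (handled by Theorem \ref{Lip-boundary-main}), then summing over the finitely many balls, whose number is controlled by $n$ and $\text{diam}(\Omega)/\mathfrak{K}$. The only detail to adjust is that Theorem \ref{Lip-boundary-main} is stated for centers $x_0\in\partial\Omega$, so your ``boundary'' balls should be re-centered at nearby boundary points (as the paper does) rather than invoked directly at points merely within distance $Md$ of $\partial\Omega$.
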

We will postpone the proof of Theorem \ref{maintheorem-Cacciaopoli} for later sections. For now we will assume its validity to prove the main result of the paper.  
\newline

\noindent\begin{proof}[Proof of Theorem \ref{maintheorem}]
Suppose that  vector field  ${\bf f}\in L^{q}_{w}(\Omega; \RR^n)$, and $u\in W^{1, p}_{0}(\Omega)$ is its corresponding solution.  We want to show that $u$ satisfies inequality  \eqref{stability}. Using the density of space of smooth functions in the weighted $L^{q}$ space, pick a sequence of vector fields  ${\bf f}_{m} \in C_{c}^{\infty}(\Omega;\mathbb{R}^{n})$ such that, 
\[
{\bf f}_{m} \to {\bf f} \quad \text{in}\ \   L^{q}_{w}(\Omega; \RR^n),  \qquad  \text{as} \ \ m\to \infty. 
\]
For each $m$, applying Theorem \ref{maintheorem-Cacciaopoli} and noting ${\bf f}_{m} \in L^{\infty}(\Omega;\mathbb{R}^{n})$, and $\Omega$ is a bounded set, we can choose $C$  and $\delta$ such that the corresponding solution $u_{m}$ to \eqref{basic-pde} belongs to $W^{1, \bart}_{0}(\Omega)\subset W^{1,q}_{w,0}(\Omega)$, where $\bart$ is as in Corollary \ref{Lebesgue-vs-weighted}, 
with the estimate 
\begin{equation}\label{stability-Cacciaopoli-m}
\int_{\Omega} |\nabla u_{m}|^q w(x) \ dx \leq C \int_{\Omega} \left(|{\bf f}_{m}|^{q} + |u_{m}/\mathfrak{K}|^{q}\right)w(x)\ dx.  
\end{equation}
We will demonstrate next that it is possible to absorb the term $\int_{\Omega} |u_{m}/\mathfrak{K}|^{q} w \, dx $ on the right hand side of \eqref{stability-Cacciaopoli-m} to obtain that, up on a new constant C independent of $m$, 
\begin{equation}\label{Cinfty-right-estimate}
\int_{\Omega} |\nabla u_{m}|^q w(x) \ dx \leq C \int_{\Omega} |{\bf f}_{m}|^{q} w(x)\ dx. 
\end{equation}
To that end, let $B$ be any ball of radius ${\rm diam}(\Omega)$  such that $\Omega \subset B$ and denote the weight $\overline{w} = \frac{w}{w(B)}$. Clearly, we have  $\overline{w}\in A_{q} $ and $[\overline{w}]_{A_{q}} = [w]_{A_{q}}$.
 Let $\epsilon >0$ be a small number that will be determined later and extend $u_{m}$ by zero outside $\Omega$, then by an interpolation inequality (see \cite[Proposition 1.1.14]{Gra}), we have, 
\begin{equation}
 \label{interpolation_one}
\left(\int_{B} |u_{m}/\mathfrak{K}|^{q} \  \overline{w} \ dx\right)^{1/q} \leq  \mathfrak{K}^{-1}\|u_{m}\|_{L^{\epsilon}_{\overline{w}}(B)}^{\theta}\| u_{m}\|_{L_{\overline{w}}^{\frac{nq}{n-1}}(B) }^{1-\theta}, \qquad \text{where}\  \theta = \frac{\epsilon} {nq - \epsilon(n-1)}. 
\end{equation}
Since we have $u_{m} \in W^{1,q}_{w,0}(\Omega)$, the zero extension of $u_{m}$ outside of $\Omega$ will be in $W^{1, q}_{w,0}(B).$  We can then apply the weighted Sobolev embedding \eqref{weightedSob} to the last term in \eqref{interpolation_one} followed by Young's inequality with exponents $\frac{1}{\theta}$ and $\frac{1}{1-\theta}$ to obtain
\begin{equation}\label{umathfrakk}
\begin{split}
\left(\int_{B} |u_{m}/\mathfrak{K}|^{q} \ \overline{w} \ dx\right)^{1/q} &\leq C\  \mathfrak{K}^{-1} \text{diam}(\Omega)^{1-\theta}\|u_{m}\|_{L^{\epsilon}_{\overline{w}}(B)}^{\theta}\| \nabla u_{m}\|_{L_{\overline{w}}^{q}(B)}^{1-\theta}\\
& \leq C \ \mathfrak{K}^{\frac{-1}{\theta}} \text{diam}(\Omega)^{\frac{1-\theta}{\theta}}\|u_{m}\|_{L^{\epsilon}_{\overline{w}}(B) } + \eta \|\nabla u_{m}\|_{L_{\overline{w}}^{q}(B)}
\end{split}
\end{equation}
for any $\eta > 0$.  We will eventually choose $\eta = \frac{1}{2}$.  

On the other hand,  if $ \gamma_{0} = 1+\gamma> 1$ where $\gamma >0$ is as in Lemma \ref{reverse_holder} from the reverse H\"older property  of $w \in A_{q}$, 
we then see that 
\[
\begin{split}
\int_{\Omega} |u_{m}|^{\epsilon} \ \overline{w} \ dx  
\leq \frac{1}{w(B)} \left( \int_{B} |u_{m}|^{ \frac{\epsilon \gamma_{0}}{\gamma_{0}-1}} dx \right)^{(\gamma_{0}-1)/\gamma_{0}} \left( \int_{B}w ^{\gamma_{0}}dx \right)^{1/\gamma_{0}}\leq C \left( \fint_{B} |u_{m}|^{ \frac{\epsilon \gamma_{0}}{\gamma_{0}-1}} dx \right)^{(\gamma_{0}-1)/\gamma_{0}}. 
\end{split}
\] 
We may now use the estimate \eqref{byun_a_priori}
to make an appropriate choice of $\epsilon$ as follows:  choose 
\[\epsilon \leq \left( \frac{np}{n-p} \right) \left(\frac{\gamma_0 -1}{\gamma_0}\right) \,\text{ if $p < n$, else any $ \epsilon>0$ if $p \geq n$.}
\]
 Then applying  { Sobolev embedding} and by this choice of $\epsilon$, we get 
\[
\begin{split}
\left( \fint_{B} |u_{m}|^{ \frac{\epsilon \gamma_0}{\gamma_0-1}}\  dx \right)^{(\gamma_{0}-1)/\gamma_{0}} &\leq C \text{diam}(\Omega)^{\epsilon} \left(\fint_{B}|\nabla u_{m}|^{p} \ dx \right)^{\epsilon/p} \leq  C  \text{diam}(\Omega)^{\epsilon} \left(\fint_{B}|{\bf f}|^{p}\ dx\right)^{\epsilon/p }\\
& \leq C  \text{diam}(\Omega)^{\epsilon}  \left(\int_{B}|{\bf f}|^{q} \overline{w} \ dx \right)^{\epsilon/q},
\end{split}
\]
where we used \eqref{byun_a_priori} and \eqref{holder_weights} in the second and last inequality, respectively. Thus we find that
\begin{equation}\label{uepsilonw}
\|u_{m}\|_{L^{\epsilon}_{\overline{w}}(B)}\leq C \text{diam}(\Omega)  \left(\int_{B}|{\bf f}|^{q} \overline{w} \ dx \right)^{1/q}.
\end{equation}

 Combining \eqref{umathfrakk} and \eqref{uepsilonw} together,  we obtain that 
\[
\left(\int_{B} |u_{m}/\mathfrak{K}|^{q} \ \overline{w} \ dx\right)^{1/q}   \leq C\, \mathfrak{K}^\frac{-1}{\theta} \text{diam}(\Omega)^{\frac{1}{\theta}} \left( \int_{B} |{\bf f }|^{q} \overline{w}\  dx \right)^{1/q}  + \eta  \left( \int_{B} |\nabla u_{m}|^{q} \overline{w}\ dx \right)^{1/q}.
\]
Using this inequality in \eqref{stability-Cacciaopoli-m}, by making the choice of  $\eta= \frac{1}{2}$, we can absorb the last term of \eqref{stability-Cacciaopoli-m} to its left hand side to obtain \eqref{Cinfty-right-estimate}. 

Finally, by linearity, for each $m$ we have that  $u_m - u$ uniquely solves \eqref{basic-pde} corresponding to the data ${\bf f}_{m} - {\bf f} \in L^{p}(\Omega)$. Therefore we have the convergence 
\[
\| \nabla u_{m} - \nabla u \|_{L^{p}} \leq C' \|{\bf f}_{m} - {\bf f}\|_{L^{p}} \leq C\|{\bf f}_{m} - {\bf f}\|_{L^{q}_{w}(\Omega)} \to 0,\quad  \text{as $m\to \infty$, }
\]
provided 
$\mathbb{A}$ is $(\delta, \mathfrak{K})$-BMO and $\Omega$ is $(\delta, \mathfrak{K})$-Lip. 
As a consequence, up to a subsequence, $\nabla u_{m} \to  \nabla u$ almost everywhere in $\Omega$ as $m\to \infty$. Now we apply Fatou's lemma to \eqref{Cinfty-right-estimate}, to conclude that
\[
\int_{\Omega} |\nabla u|^{q} w dx\leq \liminf_{m\to \infty} \int_{\Omega} |\nabla u_{m}|^{q} w dx\leq C \lim_{m\to \infty} \int_{\Omega} |{\bf f}_{m}|^{q}w dx =C \int_{\Omega} |{\bf f}|^{q}w dx, 
\]
as desired. 
\end{proof}

The rest of this paper will be devoted to proving Theorem \ref{maintheorem-Cacciaopoli}.  It turns out that to prove the estimate \eqref{stability-Cacciaopoli} what we need is the weaker assumptions ${\bf f}\in L^{q}_{w}(\Omega;\mathbb{R}^{n})$ and  the corresponding solution $u\in W^{1, q}_{w, 0}(\Omega)$, which hold true whenever  ${\bf f}\in L^{\infty}(\Omega;\mathbb{R}^{n})$.   In all what follows,  we assume only these weaker conditions.  We prove the theorem by obtaining first local  interior and boundary estimates for the solution which is done in the next two sections. The local estimates employ a comparison argument that compares the solution $u$ with a solution of a  homogeneous  equation with constant coefficients. This allows us to obtain mean oscillation estimates for $\nabla u$ that is used to estimate  the weighted sharp maximal function of $\nabla u$. Next, we use Lemma \ref{lemma_sharp_phuc} to obtain the desired weighted gradient estimate. Such an approach has been used for $p$-Laplacian type problems in \cite{Ph2,KZ1,KZ2} and recently for linear problems in \cite{Kim-Dong2016}.

\section{Local interior estimates}\label{local interior}
The main theorem we will be proving in this section is the following local interior estimate for gradients of solution.
\begin{theorem}\label{main-interior-local-estimate} 
Suppose that $\mathfrak{K}>0$, $M_{0} > 0$,  $1 <q<\infty$, and $w\in A_{q}$ such that $[w]_{A_{q}} \leq M_{0}$. Suppose also that 
$\mathbb{A}$ satisfies  \eqref{meas-symm} and \eqref{eq-Ellipticity},  ${\bf f}\in L^{q}_{w, loc}(\Omega;\mathbb{R}^{n})$ and  $u\in W^{1, q}_{w, loc}(\Omega)$ is a weak solution of 
\begin{equation}\label{interior-pde}
\text{div}\, \mathbb{A}(x)\nabla u = \text{div}\, {\bf f}(x)\quad \text{in}\,\,\Omega. 
\end{equation}
Then there exist  constants  $\delta_{0} >0$, $M > 2$ and $C>0$
such that, whenever $\mathbb{A}$ is $(\delta, \mathfrak{K})$-BMO with $\delta \leq \delta_{0}$, it holds that 
\begin{equation}\label{eq15}
\int_{B_{d}(x_{0})} |\nabla u|^{q} w(x)\  dx \leq C \int_{B_{2d}(x_{0})}\left(|{\bf f}|^{q} + \left|\frac{u}{d}\right|^{q}\right)w (x)\ dx 
\end{equation}
for all $d \leq \mathfrak{K}/M$ and $x_{0}\in \Omega$ with $B_{Md}(x_{0})\subset \Omega$.  The constants $\delta_0, M$, and $C$
depend only on $\lambda, \Lambda, n, q,$ and  $M_0$.
\end{theorem}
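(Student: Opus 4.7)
The plan is to combine a two-scale comparison argument with the weighted sharp-maximal-function bound of Lemma \ref{lemma_sharp_phuc}. Fix $B_d(x_0)$ with $B_{Md}(x_0)\subset\Omega$ (with $M$ to be chosen) and apply Lemma \ref{lemma_sharp_phuc} to a suitable truncation $\widetilde{\nabla u}$ of $\nabla u$ supported in $B_{3d/2}(x_0)$, so that it suffices to bound the truncated sharp maximal function $\mathfrak{M}^{\#}_{\kappa\rho}\widetilde{\nabla u}$ (with $\rho\sim d$) in the weighted $L^q$ norm. Because $\widetilde{\nabla u}$ agrees with $\nabla u$ on $B_d(x_0)$, this controls the left-hand side of \eqref{eq15}; the cutoff discrepancies supported near $\partial B_{3d/2}(x_0)$ will be handled by lower-order terms, ultimately producing the $|u/d|^{q}$ term on the right-hand side of \eqref{eq15}.

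\textbf{Comparison at each scale.} For each $y$ in (a slight enlargement of) $B_{3d/2}(x_0)$ and each radius $r$ with $B_{4r}(y)\subset B_{Md}(x_0)$, I would introduce two auxiliary functions. First, let $v$ solve $\mathrm{div}(\mathbb{A}\nabla v)=0$ in $B_{2r}(y)$ with $v=u$ on $\partial B_{2r}(y)$; the subquadratic comparison estimate Lemma \ref{comparison-estimate} of the paper should yield, for some $p>1$ arbitrarily close to $1$,
\[
\Bigl(\fint_{B_{2r}(y)}|\nabla(u-v)|^p\,dx\Bigr)^{1/p}\leq C\Bigl(\fint_{B_{2r}(y)}|{\bf f}|^p\,dx\Bigr)^{1/p}.
\]
Second, let $h$ solve $\mathrm{div}(\langle\mathbb{A}\rangle_{B_{2r}(y)}\nabla h)=0$ in $B_{2r}(y)$ with $h=v$ on the boundary; the $(\delta,\mathfrak{K})$-BMO hypothesis together with the reverse Hölder property (Lemma \ref{reverse_holder}) should give, for some $\alpha\in(0,1)$,
\[
\Bigl(\fint_{B_{r}(y)}|\nabla(v-h)|^p\,dx\Bigr)^{1/p}\leq C\delta^{\alpha}\Bigl(\fint_{B_{2r}(y)}\bigl(|\nabla u|^p+|{\bf f}|^p\bigr)dx\Bigr)^{1/p}.
\]
Finally, classical $C^{1,\alpha}$ regularity for constant-coefficient equations gives $\mathrm{osc}_{B_{\tau r}(y)}\nabla h\leq C\tau^{\alpha}\bigl(\fint_{B_r(y)}|\nabla h|^p\,dx\bigr)^{1/p}$ for $0<\tau\leq 1/2$.

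\textbf{Assembly, absorption, and main obstacle.} Writing $\nabla u=\nabla(u-v)+\nabla(v-h)+\nabla h$ and applying the triangle inequality, I would obtain the pointwise control
\[
[\mathfrak{M}^{\#}_{\kappa\rho}\widetilde{\nabla u}(y)]^{p}\leq C(\tau^{\alpha p}+\delta^{\alpha p}\tau^{-n})\,\mathfrak{M}(|\nabla u|^p\mathbf{1}_{B_{2d}(x_0)})(y)+C\tau^{-n}\mathfrak{M}(|{\bf f}|^p\mathbf{1}_{B_{2d}(x_0)})(y)+\text{cutoff terms involving }|u/d|^p.
\]
Using Lemma \ref{open_ended}, choose $p\in(1,q)$ so that $w\in A_{q/p}$; then raise the pointwise inequality to the $q/p$ power, integrate against $w$, and invoke Lemma \ref{lemma_sharp_phuc} for $\widetilde{\nabla u}$ together with Lemma \ref{weighted_maximal} applied to the weight $w\in A_{q/p}$. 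Selecting first $\tau$ small and then $\delta_{0}$ small enough that the coefficient $C(\tau^{\alpha}+\delta^{\alpha}\tau^{-n/p})^{q}$ is smaller than $1/2$, a hole-filling iteration on a geometric family of concentric radii in $[d,2d]$ absorbs the $\nabla u$ contribution and delivers \eqref{eq15}. The main obstacle will be the very first comparison estimate: showing that $\nabla(u-v)\in L^p$ with the right constant for $p>1$ arbitrarily close to $1$, even though $u$ is only a ``very weak'' solution (it lives in $W^{1,q}_{w,loc}$, which for $w\in A_q$ need only embed into some $W^{1,p}_{loc}$ with $p$ possibly well below $2$). Establishing this subquadratic bound, announced as Lemma \ref{comparison-estimate} and identified in the introduction as the main technical novelty, will likely require a duality argument combined with the higher integrability (self-improvement) available for solutions of the associated homogeneous problem.
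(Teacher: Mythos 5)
Your overall skeleton — a two-scale comparison at every point and radius, a pointwise bound on a truncated sharp maximal function, Lemma \ref{lemma_sharp_phuc} combined with Lemma \ref{weighted_maximal} for a weight in $A_{q/p}$, and then absorption by choosing the parameters small and iterating — is the same as the paper's. However, there is a genuine gap at exactly the point the paper flags as the main difficulty: the treatment of the localization error. You truncate $\nabla u$ itself to get $\widetilde{\nabla u}$ supported in $B_{3d/2}(x_0)$ and assert that the ``cutoff discrepancies \dots will be handled by lower-order terms, ultimately producing the $|u/d|^q$ term.'' This is not justified and, as set up, fails: Lemma \ref{lemma_sharp_phuc} forces you to control $\mathfrak{M}^{\#}_{\kappa\rho}\widetilde{\nabla u}$ over balls of radius up to $\kappa\rho\sim \kappa d$, and on any such ball meeting the region where the truncation acts, the truncated object solves no equation, so the only available bound for its mean oscillation is the trivial one $\mathfrak{M}^{\#}\leq 2\,\mathfrak{M}$, i.e.\ of the size of the local average of $|\nabla u|$ with an order-one constant. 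Your pointwise inequality therefore acquires an extra term $C\,\mathfrak{M}(|\nabla u|^{p}\chi_{B_{2d}(x_0)})(y)$ with $C$ not small, and after raising to the power $q/p$ and integrating against $w$ this produces $C\int_{B_{2d}(x_0)}|\nabla u|^{q}w\,dx$ on the right with a non-small constant; no choice of $\tau$ and $\delta_0$, and no hole-filling iteration (which requires a coefficient strictly below one), can absorb it. The commutator terms of the type $\mathbb{A}\nabla u\cdot\nabla\zeta$ do not automatically turn into $|u/d|$ terms.

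The paper's resolution is to localize at the level of the solution, $u_{*}=u\zeta$, so that the localized function still satisfies a PDE (see \eqref{eq5}), and then to estimate the dangerous term $\langle\mathbb{A}\rangle_{B_R(z)}\nabla u\cdot\nabla\zeta$ in the negative norm $W^{-1,r'}$ by integration by parts (the term $J_2$ in the proof of Lemma \ref{comparison-estimate}): since the frozen matrix is constant and $\zeta$ is smooth, the derivative is moved off $u$ and only $\|\chi u/d\|_{L^r}$ survives. After that step, every occurrence of $\nabla u$ on the right-hand side of the comparison estimate carries the small factor $\|\mathbb{A}\|_{*}^{\vartheta}$, and this is precisely what makes the final covering/iteration absorption via Lemma \ref{iteration} work. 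Two further inaccuracies: Lemma \ref{comparison-estimate} is not the estimate $\bigl(\fint|\nabla(u-v)|^{p}\bigr)^{1/p}\lesssim\bigl(\fint|{\bf f}|^{p}\bigr)^{1/p}$ for a variable-coefficient comparison function with boundary data $u$ — it compares $u\zeta$ with a frozen-coefficient solution and its right-hand side necessarily contains the BMO-weighted gradient terms and the $u/d$ terms; and if you insist on first comparing with the variable-coefficient homogeneous solution on $B_{2r}(y)$, the $W^{1,p}$ solvability you would quote from \cite{B1} on that ball carries a constant depending on the ratio of the diameter to the flatness scale, hence on $\delta$, which must be tracked to avoid circularity when $\delta_0$ is fixed at the end. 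The paper's single-step comparison with the frozen-coefficient equation, together with Lemma \ref{Integral-C1alpha} (which itself rests on Brezis's $W^{1,r}\Rightarrow W^{1,2}_{loc}$ result plus a Gehring/interpolation/covering argument so that the oscillation decay is measured against the $L^{r}$ norm of $\nabla u_{*}$), avoids both issues; your closing remark correctly identifies this subquadratic comparison as the technical heart, but the missing conversion of the cutoff terms into $|u/d|$ terms is the step your argument cannot currently supply.
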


We remark that \eqref{eq15} is a weighted Caccioppoli type inequality, which in the case $1<q<2$  appears to be new even for  $w\equiv 1$. 

\begin{corollary}
Under the hypothesis of Theorem \ref{main-interior-local-estimate}, for every  $\Omega' \Subset \Omega$, there exists a constant $\delta_{0} = \delta_0 (\lambda, \Lambda, n, q, M_0)>0$,  
such that 
whenever $\mathbb{A}$ is $(\delta, \mathfrak{K})$-BMO with $\delta \leq \delta_{0}$, there exists $C = C(\lambda, \Lambda,n,q, M_{0}, \mathfrak{K}, \text{diam}(\Omega), \text{dist}(\Omega', \partial\Omega))>0$ such that 
\begin{equation*} 
\int_{\Omega'} |\nabla u|^{q} w(x) \ dx \leq C \int_{\Omega}\left(|{\bf f}|^{q} + \left|\frac{u}{d}\right|^{q}\right)w(x)\  dx. 
\end{equation*}
\end{corollary}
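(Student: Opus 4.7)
The plan is to deduce the corollary from Theorem \ref{main-interior-local-estimate} by a standard finite covering argument, choosing a single length scale $d$ depending only on the geometry so that the hypotheses of the theorem are satisfied uniformly at every point of $\Omega'$.

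First, with $M>2$ and $\delta_0>0$ fixed by Theorem \ref{main-interior-local-estimate}, I would set
\[
d := \min\Big\{\mathfrak{K}/M,\ \operatorname{dist}(\Omega',\partial\Omega)/M\Big\}.
\]
For this choice, every $x_0\in\Omega'$ satisfies $B_{Md}(x_0)\subset\Omega$ and $d\leq\mathfrak{K}/M$, so the local estimate \eqref{eq15} applies at each such $x_0$.

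Next, I would use a Vitali-type covering to pick finitely many points $x_1,\dots,x_N\in\Omega'$ with $\Omega'\subset\bigcup_{i=1}^N B_d(x_i)$ and the dilated balls $B_{2d}(x_i)$ having bounded overlap, $\sum_i \chi_{B_{2d}(x_i)}\leq c(n)$. The number $N$ is controlled by $|\Omega|/d^n$, and hence by $\operatorname{diam}(\Omega)$, $\mathfrak{K}$ and $\operatorname{dist}(\Omega',\partial\Omega)$. Applying Theorem \ref{main-interior-local-estimate} at each $x_i$ and summing,
\[
\int_{\Omega'}|\nabla u|^q w\,dx \leq \sum_{i=1}^{N}\int_{B_d(x_i)}|\nabla u|^q w\,dx \leq C\sum_{i=1}^{N}\int_{B_{2d}(x_i)}\Big(|{\bf f}|^q+|u/d|^q\Big)w\,dx,
\]
and the bounded overlap collapses the sum on the right to an integral over $\Omega$, with a constant of the form $C\cdot c(n)$, yielding the desired bound.

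There is no real obstacle here; the only bookkeeping is to confirm that $d$ and $N$ depend only on the quantities allowed by the statement (namely $\lambda,\Lambda,n,q,M_0,\mathfrak{K},\operatorname{diam}(\Omega),\operatorname{dist}(\Omega',\partial\Omega)$), and that the factor $d^{-q}$ implicit in the term $|u/d|^q$ can either be kept as written (in which case the $d$ in the corollary is precisely the one chosen in the cover) or absorbed into $C$, which is permitted by the stated dependence of the constant.
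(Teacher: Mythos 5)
Your proposal is correct and is essentially the argument the paper intends: the corollary is left unproved there precisely because it follows from Theorem \ref{main-interior-local-estimate} by fixing the scale $d\sim\min\{\mathfrak{K}/M,\ \mathrm{dist}(\Omega',\partial\Omega)/M\}$ and summing over a finite cover with bounded overlap, exactly as the paper itself does at scale $\mathfrak{K}/M_1$ in the global argument of Section \ref{global}. Your bookkeeping of the dependence of $d$, $N$, and hence $C$ on $\lambda,\Lambda,n,q,M_0,\mathfrak{K},\mathrm{diam}(\Omega),\mathrm{dist}(\Omega',\partial\Omega)$ is accurate, so nothing further is needed.
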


\subsection*{Local interior estimate  set up} The remaining part of this section is based on the following set up. We assume that ${\bf f}\in L^{q}_{w, loc}(\Omega; \mathbb{R}^{n})$, $u\in W^{1, q}_{w, loc}(\Omega)$ and $u$ weakly solves \eqref{interior-pde}. 
Taking $p$ as in Corollary \ref{Lebesgue-vs-weighted} we have ${\bf f}\in L^{p}_{loc}(\Omega)$ and $u \in W^{1, p}_{loc}(\Omega). $
For universal constants we use below, $C$ or $C_{0}$,  we suppress their dependence on $\lambda, \Lambda, n, q, M_{0}$. When necessary, we will specify the dependence of the constants on particular parameters to avoid confusion. 

 Fix an $x_{0}\in \Omega$ and let $\kappa > \sqrt{n}$ be as in Lemma \ref{lemma_sharp_phuc}. Let  $h\geq 2$ and  $d>0$ be constants to be chosen later on such that 
$B(x_{0}, 8h\kappa d) \subset \Omega.$ 
We will also use the cut-off function $ \zeta \in C_{c}^{\infty}(B_{2d}(x_{0}))$  such that 
\[  0\leq \zeta\leq 1 , \quad \zeta=1 \ \text{in} \ B_{d}(x_{0}), \quad |\nabla \zeta| \leq \frac{c}{d}, \quad |\nabla^{2} \zeta| \leq \frac{c}{d^{2}}, \]
where $\nabla ^{2}\zeta$ is the matrix of second derivatives.

We now introduce the function $u_* $ as $$u_{*} := u\zeta.$$ For $z\in B_{2\kappa d}(x_{0}) $ and any $0<R<2h\kappa d$, consider the homogeneous equation in $B_R(z)$:
\begin{equation}\label{eq1}
\left\{\begin{aligned}
\text{div} \langle \mathbb{A} \rangle_{R} \nabla v &= 0  \quad \text{in} \,\, B_R(z),\\
v-u_{*}&\in W^{1, r}_{0}(B_R(z)) 
\end{aligned}\right.
\end{equation} 
for some fixed  $r>1$. The next lemma estimates the difference $\nabla v - \nabla u_*$ as a function of  the mean oscillation of the coefficients.  
\begin{lemma}\label{comparison-estimate} 
For given $\gamma, r >1$ satisfying $1<r\gamma \leq p$  and any  $v \in W_0^{1,r}(B_R(z))$ solving \eqref{eq1}, there exist  positive constants $C$ and $\vartheta$ such that 
\[
\begin{split}
\fint_{B_R(z)}|\nabla v - \nabla u_{*}|^{r}\ dy \leq &\  C  \ \|\mathbb{A}\|^{1/\gamma'}_{*,2h\kappa d}\left(\fint_{B_R(z)}|\nabla u_{*}|^{r\gamma}\ dy \right)^{1/\gamma}\\
& + C \,(h\kappa)^{r}  \| \mathbb{A}\|_{*, 2h\kappa d}^{\vartheta} \fint_{B_R(z)}|\nabla u|^{r}\chi_{B_{2d}(x_{0})}(y)\ dy\\
&+ C \,(h\kappa)^{r}\fint_{B_R(z)}\left(|{\bf f}|^{r} + \left|\frac{u}{d}\right|^{r}\right)\chi_{B_{2d}(x_{0})}(y)\ dy. 
\end{split}
\] 
The constants $C=C(n,r, \gamma, \lambda, \Lambda) $ and  $\vartheta= \vartheta(r,n)$. 
\end{lemma}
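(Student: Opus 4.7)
The plan is to derive the PDE satisfied by $w := u_{*} - v$ on $B_R(z)$ and then apply the classical $W^{1,r}$ solvability for constant-coefficient divergence-form operators to reduce the task to $L^r$ estimates of the resulting sources. Applying Leibniz to $\text{div}(\mathbb{A}\nabla u_{*})$ and using the equation $\text{div}(\mathbb{A}\nabla u) = \text{div}\,{\bf f}$ yields
$$\text{div}(\mathbb{A}\nabla u_{*}) = \text{div}(\zeta{\bf f} + u\mathbb{A}\nabla\zeta) + (\mathbb{A}\nabla u - {\bf f})\cdot\nabla\zeta,$$
so $w\in W^{1,r}_0(B_R(z))$ satisfies $\text{div}(\langle\mathbb{A}\rangle_R\nabla w) = \text{div}\,F + g$ with $F=(\langle\mathbb{A}\rangle_R-\mathbb{A})\nabla u_{*}+\zeta{\bf f}+u\mathbb{A}\nabla\zeta$ and $g=(\mathbb{A}\nabla u-{\bf f})\cdot\nabla\zeta$. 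The standard constant-coefficient $W^{1,r}$ bound (with constant depending only on $n,r,\lambda,\Lambda$) will then reduce the proof to controlling $\|F\|_{L^r(B_R(z))}$ and $\|g\|_{W^{-1,r}(B_R(z))}$.

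For the divergence-form source $F$, I would treat the oscillatory piece $(\langle\mathbb{A}\rangle_R-\mathbb{A})\nabla u_{*}$ by H\"older's inequality with conjugate exponents $\gamma',\gamma$, together with the interpolation bound $(\fint_B|\mathbb{A}-\langle\mathbb{A}\rangle_B|^s)^{1/s}\leq C\|\mathbb{A}\|_{*,2h\kappa d}^{1/s}$ — itself a consequence of the BMO definition combined with the pointwise estimate $|\mathbb{A}-\langle\mathbb{A}\rangle_B|\leq 2\Lambda$ — producing the first term in the stated inequality. The remaining pieces $\zeta{\bf f}$ and $u\mathbb{A}\nabla\zeta$, pointwise controlled on $B_{2d}(x_0)$ by $|{\bf f}|$ and $(C/d)|u|$ respectively, contribute directly to the third term.

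The non-divergence source $g$ is the delicate piece. I would exploit that $\mathbb{A}\nabla u - {\bf f}$ is divergence-free on $\Omega$ by the PDE for $u$: testing that PDE against $\zeta\psi$ for $\psi\in W^{1,r'}_0(B_R(z))$ gives the identity $\int g\,\psi = -\int \zeta(\mathbb{A}\nabla u-{\bf f})\cdot\nabla\psi$. Splitting $\mathbb{A}\nabla u = (\mathbb{A}-\mathbb{A}_0)\nabla u + \mathbb{A}_0\nabla u$ around a constant-matrix average $\mathbb{A}_0$, the constant-coefficient piece is absorbed by integration by parts that moves the derivative from $u$ onto the smooth cutoff, producing terms of size $|u|/d$ and $|u|/d^2$; combined with Poincar\'e on $\psi$, which introduces the factor $R/d\leq 2h\kappa$, this yields the $(h\kappa)^r$-weighted $|u/d|^r$ and $|{\bf f}|^r$ pieces of the third term. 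The oscillatory piece $(\mathbb{A}-\mathbb{A}_0)\nabla u\cdot\nabla\zeta$ is then controlled by H\"older to extract the BMO smallness $\|\mathbb{A}\|_{*,2h\kappa d}^\vartheta$, after which a Gehring-type reverse H\"older inequality for $\nabla u$ — available thanks to the hypothesis $r\gamma\leq p$ combined with Corollary \ref{Lebesgue-vs-weighted} and the Meyers-type higher integrability implicit in \cite{B1} — converts the resulting $|\nabla u|^{r\gamma}$ factor back to $|\nabla u|^r$ on $B_{2d}(x_0)$, giving the second term.

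The principal obstacle will be this last step. A direct H\"older decomposition of $(\mathbb{A}-\mathbb{A}_0)\nabla u$ naturally produces the exponent $r\gamma$ on $|\nabla u|$; reducing this to $|\nabla u|^r$ while keeping the BMO smallness concentrated as $\|\mathbb{A}\|_{*,2h\kappa d}^\vartheta$ with $\vartheta$ depending only on $r,n$, and the geometric blowup as $(h\kappa)^r$, forces a careful choice of H\"older exponents and a precise use of reverse H\"older. Arranging these ingredients so that all three terms of the lemma appear in exactly the stated form is the technical heart of the argument.
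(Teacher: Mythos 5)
Your overall framework coincides with the paper's: the same algebraic identity for $\text{div}(\mathbb{A}\nabla u_*)$, the same equation for $w=v-u_*$, the same reduction via constant-coefficient $W^{1,r}$ solvability, and the same treatment of the main term $(\mathbb{A}-\langle\mathbb{A}\rangle_{B_R(z)})\nabla u_*$ and of the $\zeta{\bf f}$, $u\,\mathbb{A}\nabla\zeta$ pieces. The genuine gap is in your treatment of the critical non-divergence piece $(\mathbb{A}-\mathbb{A}_0)\nabla u\cdot\nabla\zeta$: you propose H\"older placing $|\nabla u|$ at the exponent $r\gamma$ and then a ``Gehring-type reverse H\"older inequality for $\nabla u$'' to come back down to the exponent $r$. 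No such inequality is available here: $u$ is only a very weak $W^{1,p}_{loc}$ solution with $p$ possibly close to $1$, and a quantitative bound of $\left(\fint|\nabla u|^{r\gamma}\right)^{1/(r\gamma)}$ by $\left(\fint|\nabla u|^{r}\right)^{1/r}$ plus data is essentially the sub-quadratic local Caccioppoli estimate that Theorem \ref{main-interior-local-estimate} is in the process of establishing (the paper stresses it is new even for $w\equiv 1$ when the exponent is below $2$), so your step would be circular; moreover a reverse H\"older inequality would require enlarging the ball and would reintroduce data terms, so the second term of the lemma could not come out in the stated form (an average of $|\nabla u|^{r}\chi_{B_{2d}(x_0)}$ over the same ball $B_R(z)$, with $\vartheta=\vartheta(r,n)$). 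The paper avoids any higher integrability of $\nabla u$: in the duality estimate of $J_1$ it uses a three-factor H\"older, placing the test function $\phi$ in $L^{\theta r'}$ via the Sobolev gain \eqref{theta-sobolev} (with $\theta=n/(n-r')$, or $2$ if $r'\geq n$), $\chi\nabla u$ in $L^{r}$, and $\mathbb{A}-\langle\mathbb{A}\rangle_{B_R(z)}$ in $L^{\bar{\theta}}$ with $\bar{\theta}=\theta r/(\theta r-\theta-r+1)$, extracting the smallness $\|\mathbb{A}\|_{*,R}^{1/\bar{\theta}}$ from $L^\infty$ boundedness combined with BMO; this keeps the exponent on $\nabla u$ equal to $r$ and yields $\vartheta=1/\bar{\theta}$ depending only on $r$ and $n$. (The hypothesis $r\gamma\leq p$ is needed only for the $\nabla u_*$ term, not for $\nabla u$.)

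A secondary point: your preliminary identity $\int g\,\psi=-\int\zeta(\mathbb{A}\nabla u-{\bf f})\cdot\nabla\psi$ is correct but counterproductive for the constant-coefficient piece. After this transformation the factor multiplying $\nabla u$ is $\zeta\nabla\psi$, so ``moving the derivative from $u$ onto the smooth cutoff'' necessarily also puts a derivative on $\nabla\psi$, producing second derivatives of the test function that are not controlled. The workable route (the paper's estimate of $J_2$) is to bound the dual norm of $\langle\mathbb{A}\rangle_{B_R(z)}\nabla u\cdot\nabla\zeta$ directly: pair with $\phi$, integrate by parts so the derivative lands on the product $\nabla\zeta\,\phi$, which produces only $\phi\,\nabla^{2}\zeta$ and $\nabla\zeta\otimes\nabla\phi$, and then apply Poincar\'e to $\phi$ to obtain the factor $R/d\leq 2h\kappa$.
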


\begin{proof}
Let $w := v-u_{*}$, then \eqref{eq1}  can be rewritten as 
\begin{equation*} 
\left\{\begin{aligned}
\text{div} \langle \mathbb{A} \rangle_{B_R(z)} \nabla w &= -\text{div} \langle \mathbb{A} \rangle_{B_R(z)} \nabla u_{*} \quad \text{in } B_R(z),\\
w &\in W^{1, r}_{0}(B_R(z)).
\end{aligned}\right.
\end{equation*}
Observing that 
\[
\text{div} \mathbb{A} \nabla (u \zeta) = \text{div} [{\bf f} \zeta + \mathbb{A} \nabla \zeta u] + (\mathbb{A}\nabla u - {\bf f})\cdot\nabla \zeta \quad \text{in } \mathcal{D}'(B_R(z)), 
\]
and using \eqref{interior-pde}, we obtain
 \begin{equation}\label{eq5}
 \left\{
\begin{aligned}
\text{div} \langle \mathbb{A} \rangle_{B_R(z)} \nabla w &= \text{div} (\mathbb{A} -\langle \mathbb{A} \rangle_{B_R(z)}) \nabla u_{*} - \text{div}[{\bf f}\zeta + \mathbb{A} \nabla \zeta u]  - \langle \mathbb{A} \rangle_{B_R(z)} \nabla u\cdot \nabla \zeta  \\
&\qquad - (\mathbb{A} - \langle \mathbb{A}\rangle_{B_R(z)} ) \nabla u \cdot \nabla \zeta  + {\bf f}\cdot \nabla \zeta \ \qquad \  \text{in} \ B_R(z) ,\\
w&=0 \quad \text{on} \ \partial B_R(z).
\end{aligned}\right.
 \end{equation}
 
Let $W^{-1,r'}(B_R(z))$ be the dual of $W^{1,r'}_{0}(B_R(z))$. Then
using $\text{spt}(\zeta) \subset B_{2d}(x_0)$ and standard elliptic estimates (see \cite[Theorem 2.1]{CW} and \cite[Corollary 1]{Fromm}), we find
\begin{equation}  \label{eqn_lemma_main}
\begin{split}
 \|\nabla w\|_{L^{r}(B_R(z))} &\leq C  \|(\mathbb{A} - \langle \mathbb{A} \rangle_{B_R(z)})\nabla u_{*}\|_{L^{r}(B_R(z))}  \\
& \quad + C \left[ \Big\|\left({\bf f} + \left|\frac{u}{d}\right|\right) \chi\Big\|_{L^{r}(B_R(z))} + J_{1} +J_{2} + J_{3}\right],
  \end{split}
	\end{equation}
 where we have set $\chi:= \chi_{B_{2d}(x_0)} $ and 
\begin{eqnarray*}
 J_{1} &:=&  \| (\mathbb{A} -  \langle \mathbb{A}\rangle_{B_R(z)})\nabla u\cdot \nabla \zeta\|_{W^{-1,r'}(B_R(z))},\\ 
 J_{2} &:=&  \|\langle \mathbb{A}\rangle_{B_R(z)}\nabla u\cdot \nabla \zeta\|_{W^{-1,r'}(B_R(z))}, \\
J_{3} &:=&  \|{\bf f} \cdot \nabla  \zeta\|_{W^{-1,r'}(B_R(z))} . 
\end{eqnarray*}

We  will now proceed with estimating the terms on the right hand side of \eqref{eqn_lemma_main}. 
To that end,  let $$\theta = \frac{n}{n-r'} > 1 \text{~~if~~} r' < n, \text{~~and~~} \theta =2 \text{~~if~~} r' \geq n.$$ Then by Sobolev embedding theorem, we have 
\begin{equation}\label{theta-sobolev}
\left(\fint_{B_R(z)} \phi^{\theta r'}\ dy \right)^{1/(\theta r')} \leq C R \left(\fint_{B_R(z)} |\nabla \phi|^{r'}\ dy \right)^{1/r'} \ \ \ \   \text{for all} \ \phi \in C_c^{\infty}(B_R(z)). 
\end{equation}
\subsection*{Estimate for $J_{1}$} By definition, we have  
\[
J_{1} = \sup\left\{\int_{B_R(z)} [(\mathbb{A}- \langle\mathbb{A}  \rangle_{B_R(z)})\nabla u\cdot \nabla \zeta] \phi\ dy:\  \phi \in C_c^{\infty} (B_R(z)),   \|\nabla \phi\|_{L^{r'}(B_R(z))} \leq 1 \right\}.
\] 
Now, applying H\"older's inequality with exponents, $\bar{\theta}=\frac{\theta r}{\theta r - \theta -r + 1}, r$ and  $\theta r'$, it follows from \eqref{theta-sobolev} and the fact that $\mathbb{A}\in L^\infty(\Omega)$  that 
\begin{equation*}\label{eq7}
\begin{split}
J_{1}&\leq \sup\left\{\frac{C}{d} \|\mathbb{A} - \langle \mathbb{A} \rangle_{B_R(z)}\|_{L^{\bar{\theta}}} \|\chi \nabla u\|_{L^{r}}\|\phi\|_{L^{\theta r'}}:\  \phi \in C_c^{\infty} (B_R(z)),   \|\nabla \phi\|_{L^{r'}} \leq 1   \right\}\\
& \leq \frac{C R}{d}  \|\mathbb{A}\|_{*,R}^{1/{\bar{\theta}}} \|\chi\nabla  u\|_{L^{r}},
\end{split}
\end{equation*}
where all norms are taken over the ball $B_R(z)$. 
\subsection*{Estimate for $J_{2}$} Let $\phi \in C_c^{\infty} (B_R(z))$ be such that $\|\nabla \phi\|_{L^{r'}(B_R(z))} \leq 1$, then by integrating by parts,  we have
\[
\begin{split}
\int_{B_R(z)}  \left(\langle \mathbb{A} \rangle_{B_R(z)} \nabla u \cdot \nabla \zeta \right) \phi \  dy & =  -\int_{B_R(z)}  u\  \text{div} ([\langle \mathbb{A} \rangle_{B_R(z)}\nabla \zeta] \phi )\ dy\\
&+   \int_{\partial B_R(z)} u(x)([\langle \mathbb{A} \rangle_{B_R(z)}\nabla \zeta] \phi  )\cdot \nu \ dH^{d-1}(x).
\end{split}
\]
The second term on the right hand side vanishes, since $\phi$ is compactly supported in $B_R(z)$.  For the  first term we find
\begin{equation}\label{eqII-1}
\begin{split}
 \left|\int_{B_R(z)}  u\  \text{div} (\langle \mathbb{A} \rangle_{B_R(z)}\nabla \zeta \phi )\ dy\right| &= \left|\int_{B_R(z)} u  \langle \mathbb{A} \rangle_{B_R(z)}^{T}: [\phi \nabla^{2}\zeta  + \nabla \zeta \otimes \nabla \phi ]\ dy\right|\\
 &\leq C \int_{B_R(z)} \left( |u| |\nabla^{2}\zeta| |\phi| \chi + |u||\nabla \zeta| |\nabla \phi|   \chi \right) \ dy.
\end{split}
\end{equation}
The first term on the right hand side of \eqref{eqII-1} can be estimated using Poincar\'e's inequality as 
\[
\begin{split}
\int_{B_R(z)}    |u| |\nabla^{2}\zeta| |\phi|  \chi \ dx & \leq  \frac{C}{d^{2}} \int_{B_R(z)} \chi |u||\phi| \ dy  \leq   \frac{C}{d^{2}}  \|\chi u\|_{L^{r}(B_R(z))}\|\phi\|_{L^{r'}(B_R(z))} \\
 &\leq \frac{C R}{d^{2}} \|\chi u\|_{L^{r}(B_R(z))}  \|\nabla \phi\|_{L^{r'}(B_R(z))} \leq \frac{C R}{d^{2}} \|\chi u\|_{L^{r}(B_R(z))}. 
\end{split}
\]
In a similar way, the second term on the right hand side of \eqref{eqII-1} can be estimated as follows:
\[
\begin{split}
\int_{B_R(z)}   |u||\nabla \zeta| |\nabla \phi| \chi\ dx  & \leq \frac{C}{d} \int_{B_R(z)} \chi |u||\nabla \phi|\ dy\\
& \leq  \frac{C}{d} \|\chi u\|_{L^{r}(B_R(z))}\|\nabla \phi\|_{L^{r'}(B_R(z))}  \leq  \frac{C}{d} \|\chi u\|_{L^{r}(B_R(z))}.
\end{split}
\]
Since we have $0<R<2h\kappa d$ and $h \kappa \geq 1$, we can now combine the last two estimates to obtain
\begin{equation}\label{eq8}
\begin{split}
J_{2}&=\sup\Big\{ \int_{B_R(z)}  \left(\langle \mathbb{A} \rangle_{B_R(z)} \nabla u \cdot \nabla \zeta \right) \phi \  dy:\  \phi \in C_c^{\infty} (B_R(z)),   \|\nabla \phi\|_{L^{r'}(B_R(z))} \leq 1 \Big\}\\
& \leq   C \left( \frac{R}{d}  + 1\right)\frac{1}{d} \|\chi u\|_{L^{r}(B_R(z))} \leq \frac{Ch\kappa }{d} \|\chi u\|_{L^{r}(B_R(z))}.
\end{split}
\end{equation}
\subsection*{Estimate for $J_{3}$} From the definition and Poincar\'e's inequality we obtain that 
\begin{equation}\label{eq9}
\begin{split}
J_{3} &= \sup\left\{ \left|\int_{B_R(z)} {\bf f} \cdot \nabla \zeta \phi \  dy  \right|:\  \phi \in C_c^{\infty} (B_R(z)),   \|\nabla \phi\|_{L^{r'}(B_R(z))} \leq 1 \right\}\\ 
& \leq \frac{C}{d}\sup\left\{  \|\chi {\bf f}\|_{L^{r}(B_R(z))} \|\phi\|_{L^{r'}(B_R(z)) }:\  \phi \in C_c^{\infty} (B_R(z)),   \|\nabla \phi\|_{L^{r'}(B_R(z))} \leq 1 \right\}\\
& \leq  \frac{C R}{d} \sup\left\{\|\chi {\bf f}\|_{L^{r}(B_R(z))}  \|\nabla \phi\|_{L^{r'}(B_R(z))}:\  \phi \in C_c^{\infty} (B_R(z)),   \|\nabla \phi\|_{L^{r'}(B_R(z))} \leq 1 \right\}\\
& \leq C h \kappa  \|\chi {\bf f}\|_{L^{r}(B_R(z))}. 
\end{split}
\end{equation}
\subsection*{Estimate for $\|(\mathbb{A} - \langle \mathbb{A} \rangle_{B_R(z)})\nabla u_{*}\|_{L^{r}(B_R(z))}$} Applying H\"{o}lder's inequality and the fact that $\mathbb{A}\in L^\infty(\Omega)$ we obtain that 
\begin{equation}
\label{eq10}
\begin{split}
\|(\mathbb{A} - \langle \mathbb{A} \rangle_{B_R(z)})\nabla u_{*}\|_{L^{r}(B_R(z))} &\leq \|\mathbb{A} - \langle \mathbb{A} \rangle _{B_R(z)}\|_{L^{r\gamma'}(B_R(z))}\|\nabla u_{*}\|_{L^{r\gamma}(B_R(z))} \\
&\leq C |B_R(z)|^{\frac{1}{r\gamma'}} \|\mathbb{A}\|_{*,R}^{\frac{1}{r\gamma'}} \|\nabla u_{*}\|_{L^{r\gamma}(B_R(z))}.
\end{split}
\end{equation}
Combining equations \eqref{eq7}, \eqref{eq8}, \eqref{eq9} and \eqref{eq10} into \eqref{eqn_lemma_main},  we get the desired estimate. This completes the proof of the Lemma. 
\end{proof}

 We will repeatedly use the following iteration device which can be found in \cite[Lemma 6.1]{Giu}.
\begin{lemma}
 \label{iteration}
Let $g(\tau) \geq 0$ be a bounded function in $[\tau_0, \tau_1]$ and $A, B \geq 0$ are given constants.  Suppose for any $\tau_0 \leq l_1 < l_2 \leq \tau_1$, we have 
$$g(l_1) \leq \ \theta\  g(l_2) + \frac{A}{(l_2-l_1)^{\alpha}} + B,$$ for some $\theta \in [0,1)$. Then for any $\tau_0 \leq l_1 < l_2 \leq \tau_1$, it holds that
$$g(l_1) \leq C(\alpha, \theta) \left[ \ \frac{A}{(l_2-l_1)^{\alpha}} + B\right]. $$
\end{lemma}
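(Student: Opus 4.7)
The plan is to use the standard dyadic iteration trick: build a geometric sequence of points $\tau_k \in [l_1, l_2]$ that accumulates at $l_2$, apply the hypothesis at consecutive points, and iterate. This is why the hypothesis must hold for all pairs $\tau_0 \leq l_1 < l_2 \leq \tau_1$ and not merely for a single pair.

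Concretely, fix $\eta \in (0,1)$ with $\theta \eta^{-\alpha} < 1$, i.e., choose any $\eta \in (\theta^{1/\alpha},1)$ (this is possible since $\theta<1$). Set $\tau_0 := l_1$ and $\tau_{k+1} := \tau_k + (1-\eta)\eta^{k}(l_2-l_1)$, so that $\tau_k \nearrow l_2$ as $k\to\infty$ and $\tau_{k+1}-\tau_k = (1-\eta)\eta^{k}(l_2-l_1)$. Applying the assumed inequality with the pair $\tau_k < \tau_{k+1}$ yields
\[
g(\tau_k) \leq \theta\, g(\tau_{k+1}) + \frac{A}{(1-\eta)^{\alpha}\eta^{k\alpha}(l_2-l_1)^{\alpha}} + B,
\]
and iterating this $N$ times gives
\[
g(l_1) \leq \theta^{N} g(\tau_{N}) + \frac{A}{(1-\eta)^{\alpha}(l_2-l_1)^{\alpha}} \sum_{k=0}^{N-1}(\theta \eta^{-\alpha})^{k} + B\sum_{k=0}^{N-1}\theta^{k}.
\]

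The point of the choice of $\eta$ is that the geometric series in the middle converges: this is where the hypothesis $\theta<1$ is actually used in a nontrivial way (any $\eta<1$ works once $\theta<1$). The final step is to let $N\to\infty$. The boundedness of $g$ on $[\tau_0,\tau_1]$ together with $\theta<1$ forces $\theta^{N} g(\tau_{N}) \to 0$, and the two geometric sums converge to $\frac{1}{1-\theta\eta^{-\alpha}}$ and $\frac{1}{1-\theta}$ respectively. Collecting terms yields
\[
g(l_1) \leq C(\alpha,\theta)\left[\frac{A}{(l_2-l_1)^{\alpha}} + B\right],
\]
with $C(\alpha,\theta) = \max\!\left\{\frac{1}{(1-\eta)^{\alpha}(1-\theta\eta^{-\alpha})},\ \frac{1}{1-\theta}\right\}$ for the selected $\eta$.

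There is no real obstacle: the only subtle point is the calibration of $\eta$ so that the telescoping geometric series converges, and the (easy) use of boundedness of $g$ to kill the remainder term $\theta^{N}g(\tau_{N})$. The entire proof is a standard exercise; its importance in our setting is solely as a technical tool for absorbing terms in the Caccioppoli-type arguments of the following sections.
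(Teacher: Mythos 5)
Your proof is correct, and it is essentially the paper's own approach: the paper gives no proof at all, simply citing \cite[Lemma 6.1]{Giu}, and your dyadic iteration with $\tau_{k+1}=\tau_k+(1-\eta)\eta^k(l_2-l_1)$, $\eta\in(\theta^{1/\alpha},1)$, is precisely the standard argument from that reference. One small inaccuracy in your commentary: the parenthetical claim that ``any $\eta<1$ works once $\theta<1$'' is false, since the middle geometric series converges only when $\theta\eta^{-\alpha}<1$, i.e.\ $\eta>\theta^{1/\alpha}$; this is harmless here because your actual proof fixes $\eta$ in exactly that range.
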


The following lemma gives a quantitative interior $C^{1,\alpha}$ estimate for equation \eqref{eq1}. 

\begin{lemma}\label{Integral-C1alpha}
Let
$v\in W_0^{1,r}(B_R(z))$ be  a solution to \eqref{eq1} for some $r\in (1,  p]$. 
Then there exists a constant $C=C(n,\lambda,\Lambda,r)>0$ and $\alpha = \alpha(n, \lambda,\Lambda) \in (0, 1)$ such that for every $\rho \in (0, R/2)$,  we have the estimate 
\begin{equation}\label{lemma4.5_estimate}
\fint_{B_{\rho}(z)}|\nabla v- \langle \nabla v\rangle_{B_{\rho}(z)} |\ dy \leq C\left( \frac{\rho}{R}\right)^{\alpha} 
\left( \fint_{B_{R}(z)}|\nabla u_{*}|^{r}\ dy \right)^{1/r}.
\end{equation}
\end{lemma}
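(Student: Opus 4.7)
The strategy is to split the estimate into two independent ingredients: a Campanato-type $C^{1,\alpha}$ decay for the homogeneous constant-coefficient equation, and a global $W^{1,r}$ bound relating $\|\nabla v\|_{L^r}$ to $\|\nabla u_*\|_{L^r}$.

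First I would invoke classical interior regularity for the frozen-coefficient equation $\mathrm{div}(\langle \mathbb{A}\rangle_{B_R(z)}\nabla v) = 0$. Because $\langle\mathbb{A}\rangle_{B_R(z)}$ is a constant symmetric matrix with the same ellipticity constants $\lambda,\Lambda$, every partial derivative of $v$ solves the same equation, so De Giorgi--Nash--Moser theory, combined with Caccioppoli's inequality, gives interior $L^\infty$ and $C^\alpha$ bounds on $\nabla v$ for some $\alpha = \alpha(n,\lambda,\Lambda)\in(0,1)$. Scaling to $B_{R/2}(z)$ and passing to the Campanato characterisation of $C^\alpha$, one obtains the decay
\[
\fint_{B_\rho(z)} \bigl|\nabla v - \langle\nabla v\rangle_{B_\rho(z)}\bigr|\,dy
\;\le\; C\Bigl(\frac{\rho}{R}\Bigr)^\alpha
\Bigl(\fint_{B_{R/2}(z)} |\nabla v|^{r}\,dy\Bigr)^{1/r}
\]
for all $0<\rho\le R/2$. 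The exponent $r$ on the right is cosmetic: for constant coefficients each $L^s$ average of $\nabla v$ controls each other $L^{s'}$ average on slightly shrunk balls by a Moser/reverse-H\"older iteration, so any $r>1$ is allowed.

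Second I would control $\|\nabla v\|_{L^r(B_R(z))}$ in terms of $\|\nabla u_*\|_{L^r(B_R(z))}$. Setting $w:=v-u_*\in W^{1,r}_0(B_R(z))$, the function $w$ is the Dirichlet solution of
\[
\mathrm{div}\bigl(\langle\mathbb{A}\rangle_{B_R(z)}\nabla w\bigr)
= -\,\mathrm{div}\bigl(\langle\mathbb{A}\rangle_{B_R(z)}\nabla u_*\bigr)\qquad\text{in }B_R(z),
\]
a problem with constant coefficients on a ball. By a linear change of variable diagonalising $\langle\mathbb{A}\rangle_{B_R(z)}$ and the classical Calder\'on--Zygmund $W^{1,r}$ theory for the Laplacian on a ball (valid for every $r>1$), one gets $\|\nabla w\|_{L^r(B_R(z))}\le C\|\nabla u_*\|_{L^r(B_R(z))}$ with $C=C(n,\lambda,\Lambda,r)$, and so by the triangle inequality
\[
\Bigl(\fint_{B_R(z)}|\nabla v|^r\,dy\Bigr)^{1/r}
\;\le\; C\Bigl(\fint_{B_R(z)}|\nabla u_*|^r\,dy\Bigr)^{1/r}.
\]

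Combining the two displays (and absorbing the passage from $B_{R/2}(z)$ to $B_R(z)$ into $C$) yields \eqref{lemma4.5_estimate}. The only subtle step is justifying the $L^r$-to-$L^\infty$ passage on $\nabla v$ in the first ingredient for $r<2$; I would handle it by noting that each component of $\nabla v$ itself solves the same constant-coefficient equation, so Moser iteration on these components upgrades any $L^r$ average to an interior $L^\infty$ bound, after which the usual $C^{1,\alpha}$ estimate for constant coefficient operators closes the argument.
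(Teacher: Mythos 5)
Your overall decomposition is the same as the paper's: an interior Campanato/$C^{1,\alpha}$ oscillation decay for the frozen-coefficient solution, followed by the global bound $\|\nabla v\|_{L^{r}(B_R(z))}\le C\|\nabla u_{*}\|_{L^{r}(B_R(z))}$ obtained by viewing $w=v-u_{*}\in W^{1,r}_{0}(B_R(z))$ as the solution of a constant-coefficient Dirichlet problem with divergence-form data; that second ingredient is literally the paper's last step. Where you diverge is in how the sub-quadratic case $r<2$ is justified in the first ingredient. The paper first invokes Brezis's theorem (\cite{BRE}) to upgrade the $W^{1,r}$ very weak solution to $W^{1,2}_{loc}(B_R(z))$, applies the standard $C^{1,\alpha}$ estimate with an $L^{2}$ average of $\nabla v$ on the right, and then devotes most of the proof to converting that $L^{2}$ average into an $L^{1}$ (hence $L^{r}$) average on a slightly larger ball, via Gehring's lemma, an $L^{1}$--$L^{s}$ interpolation, a covering argument, and the iteration Lemma \ref{iteration}. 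You instead declare the exponent ``cosmetic'' and propose to run Caccioppoli/Moser iteration directly on the components of $\nabla v$.

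That last step is the one genuine soft spot in your write-up: applying Caccioppoli and Moser iteration to $\partial_i v$ presupposes that $\partial_i v$ is itself a $W^{1,2}_{loc}$ (sub)solution, i.e.\ that $v$ has locally square-integrable second derivatives, which is exactly what is \emph{not} known a priori when $v$ is only a $W^{1,r}$ very weak solution with $r<2$; as stated the argument is circular. For constant coefficients the gap is easy to close: after an affine change of variables the operator becomes the Laplacian, Weyl's lemma (hypoellipticity) gives interior smoothness of any distributional solution, and then $\sup_{B_{R/2}(z)}|\nabla v|\le C\fint_{B_{R}(z)}|\nabla v|\,dy$ and the $C^{1,\alpha}$ decay follow classically, yielding the $L^{1}$ (hence $L^{r}$) control you want; alternatively, quote the Brezis result as the paper does. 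With that regularity step made explicit your proof is correct and somewhat more economical than the paper's Gehring-plus-covering route; the paper's route has the advantage of relying only on energy-type tools, which is why the same scheme is reused essentially verbatim in the boundary estimate of Lemma \ref{flat-c1alpha}.
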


\begin{proof}
Since $v\in W^{1,r}(B_R(z))$ and $r >1$,  by a result in \cite[Theorem A1.1]{BRE},  we see that $v\in W^{1, s}_{loc}(B_R(z))$ for all $s>1$. In particular, we have $v \in W^{1,2}_{loc}(B_R(z))$.  Thus by the standard  $C^{1, \alpha}$  estimate  (see e.g., \cite[Equation (3.6)]{KZ1} for $p=2$) we have 
\begin{equation}\label{standard-c1alpha}
\fint_{B_{\rho}(z)}|\nabla v - \langle \nabla v\rangle_{B_{\rho}(z)} |\ dy \leq C \left(\frac{\rho}{R}\right)^{\alpha} \left(\fint_{B_{R/4}(z)} |\nabla v|^{2}\ dy\right)^{1/2}
\end{equation}
for any $0 < \rho \leq R/4$ and the constant $C$ is independent of $\rho$.  Our first goal is to show that, via an intermediate exponent $s > 2$,
\begin{equation}
 \label{eq36}
 \left(\fint_{B_{R/4}(z)} |\nabla v|^{2}\ dy\right)^{1/2} \leq  \left(\fint_{B_{R/4}(z)} |\nabla v|^{s}\ dy\right)^{1/s} \leq C  \fint_{B_{R/2}(z)} |\nabla v|\ dy.
\end{equation}
To that end, by Gerhing's lemma (see \cite[Chapter 6]{Giu}) there exists  $s>2$ and a constant $C=C(n,\lambda,\Lambda,r)>0$   such that 
\begin{equation}\label{eq13}
\left(\fint_{B_{l}(\tilde{z})} |\nabla v|^{s}\ dy\right)^{1/s} \leq C \left(\fint _{B_{2l}(\tilde{z})}|\nabla v|^{2}\ dy\right)^{1/2}
\end{equation}
for any balls $B_{l}(\tilde{z}) \subset B_{2l}(\tilde{z}) \subset B_{R}(z)$. Let $R/4 < l_1 < l_2 < R/2$, then $B_{l_1}(z)\subset B_{l_2}(z)$. We shall now cover the ball $B_{l_1}(z)$ by a sequence of balls $B_{i} = B_{(l_2-l_1)/2}(\tilde{z}_{i})$ with $\tilde{z}_{i}\in B_{l_1}(z)$ in such a way that any point $y\in \mathbb{R}^{n}$ belongs to almost $N(n)$  balls of the collection $\{ 2 B_i \}:=\{B_{l_2-l_1}(\tilde{z}_i)\}  $, i.e., we have
\[
\sum_{i}\chi_{2B_{i} } (y)  \leq N = N(n) \qquad \forall  y \in \RR^n.   
\]
Note that $2B_{i}  = B_{l_2-l_1}(\tilde{z}_{i}) \subset B_{l_2}(z)\subset B_R(z)$ for any $i$.  
Therefore, applying \eqref{eq13} we get 
\[
\begin{split}
\int_{B_{i}}|\nabla v|^{s} \ dy  &\leq C \left(\int_{2B_{i}}|\nabla v|^{2}\ dy\right)^{s/2} |B_{i}|^{1- \frac{s}{2}}\\
&=C \left(\int_{2B_{i}} |\nabla v|^{2}dy\right)^{s/2} (l_2-l_1)^{\frac{(2-s)n}{2}}. 
\end{split}
\]
Summing over all $i$,  and using Minkowski's inequality since $s/2 > 1$, we obtain that 
\[
\begin{split}
\int_{B_{l_1}(z)}|\nabla v|^{s}\ dy &\leq C\sum_{i} \left(\int_{B_{l_2}(z)}\chi_{2B_{i}}(y)|\nabla v|^{2}\ dy \right)^{s/2}(l_2-l_1)^{\frac{(2-s)n}{2}}\\
& \leq C \left( \int_{B_{l_2}(z)} |\nabla v|^{2} \Big[\sum_{i}\chi_{2B_{i}}(y)\Big] \ dy \right)^{s/2} (l_2-l_1)^{\frac{(2-s)n}{2}}\\
& \leq C \left(N \int_{B_{l_2}(z)} |\nabla v|^{2}\ dy \right)^{s/2} (l_2-l_1)^{\frac{(2-s)n}{2}}.
\end{split}
\]
Interpolating between the space $L^{1}$ and $L^{s}$ then with $\varrho := \frac{s-2}{2(s-1)} \in (0, 1/2)$, we see that 
\[
\|\nabla v\|_{L^{2}(B_{l_2}(z))} \leq \|\nabla v \|_{L^{1}(B_{l_2}(z))}^{\varrho} \,\|\nabla v \|_{L^{s}(B_{l_2}(z))}^{1-\varrho}.
\]
 An application of Young's inequality now  gives
\[
\begin{split}
\int_{B_{l_1}(z)}|\nabla v|^{s} \ dy &\leq C \left(\int_{B_{l_2}(z)} |\nabla v|\ dy \right)^{\varrho s} \left(\int_{B_{l_2}(z)} |\nabla v|^{s}\ dy \right)^{1 - \varrho} (l_2-l_1)^{\frac{(2-s)n}{2}}\\
&\leq\frac{1}{2} \int_{B_{l_2}(z)}|\nabla v|^{s}\ dy  + C \left(\int_{B_{l_2}(z)} |\nabla v|\ dy \right)^{s} (l_2-l_1)^{n(1-s)}.
\end{split}
\]
The above inequality holds for any $R/4 \leq l_1 < l_2 \leq R/2$, for a constant $C$ that does not depend on $R, l$ or $z$ but only depends on $\lambda, \Lambda, n, r$.  
We can now apply  Lemma \ref{iteration} to obtain
\[
\int_{B_{R/4}(z)}|\nabla v|^{s}\  dy \leq C \left(\int_{B_{R/2}(z)}|\nabla v|\ dy\right)^{s} R^{n(1-s)}.
\]
We may rewrite the above inequality and use the fact that $s > 2$ to obtain \eqref{eq36} as desired.

Plugging  \eqref{eq36}  into \eqref{standard-c1alpha} and using H\"older's inequality, we obtain
\begin{equation} \label{eq14}
\fint_{B_{\rho}(z)}|\nabla v - \langle\nabla v\rangle_{B_{\rho}(z)} |\ dy \leq C \left(\frac{\rho}{R}\right)^{\alpha} \left(\fint_{B_{R/2}(z)}|\nabla v|^r \ dy\right)^{\frac{1}{r}}
\end{equation}
for all $0 < \rho < R/4$. It is obvious that \eqref{eq14} holds trivially when $R/4  < \rho\leq R/2$ as well. Thus \eqref{eq14} holds for all $\rho\in (0, R/2]$.  

Finally, we  apply standard $L^{r}$ estimates for linear equations with constant coefficients 
to obtain \eqref{lemma4.5_estimate}. This completes the proof of the lemma. 
\end{proof}

The following result combines Lemma \ref{comparison-estimate} and Lemma \ref{Integral-C1alpha} to yield a local mean oscillation estimate for the gradient of the solution $u $ to \eqref{interior-pde}. 
\begin{corollary}\label{mean-oscillation-interior}
Given  $1<r <p$,  there exist  positive constants $C$, $C_{0}$ and $\vartheta$ such that  for any $\rho \in (0, 2  \kappa d)$, $R= h\rho$, with $h\geq 2$,  and any $z\in B_{2\kappa d}(x_{0})$ we have 
\begin{equation} \label{mean-OSC-est} \begin{split}
\fint_{B_{\rho}(z)} &||\nabla u_{*}|- \langle|\nabla u_{*}|\rangle_{B_{\rho}(z)}|dy \leq C\|\mathbb{A}\|^{1/r- 1/p}_{*,2h\kappa d}\left(\fint_{B_{R}(z)}|\nabla u_{*}|^{p}dy \right)^{1/p}\\
& +  C\| \mathbb{A}\|_{*, 2h\kappa d}^{\frac{\vartheta}{r}} \left(\fint_{B_{R}(z)}|\chi_{B_{2d}(x_{0})}(y)\nabla u|^{r}dy\right)^{1/r}\\
&+ C\left(\fint_{B_{R}(z)} G(y)dy\right)^{1/r} +  C_{0} h^{-\alpha} 
\left( \fint_{B_{R}(z)}|\nabla u_{*}|^{r}\right)^{1/r}, 
\end{split}
\end{equation}
where $C=C(h)$, $C_{0}$ is independent of $h$, $\vartheta = \vartheta (r, n)$ and $\alpha\in (0, 1)$ is from Lemma \ref{Integral-C1alpha}. In \eqref{mean-OSC-est}, we set 
\begin{equation}\label{defn-G}
G(y) = \left( |{\bf f}(y)|^{r} +\left|\frac{u (y)}{d}\right|^{r}\right)\chi(y), \quad\quad \chi(y)=\chi_{B_{2d}(x_{0})}(y). 
\end{equation}

\end{corollary}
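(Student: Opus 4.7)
The plan is to insert the solution $v$ of the frozen-coefficient problem \eqref{eq1} (with some fixed $r \in (1,p)$, so that $u_* \in W^{1,r}(B_R(z))$ via Corollary \ref{Lebesgue-vs-weighted}) and decompose the oscillation of $|\nabla u_*|$ on $B_\rho(z)$ as
\[
\fint_{B_\rho(z)} \bigl||\nabla u_*| - \langle|\nabla u_*|\rangle_{B_\rho(z)}\bigr|\,dy
\;\leq\; 2\fint_{B_\rho(z)} \bigl|\,|\nabla u_*|-|\nabla v|\,\bigr|\,dy
\;+\; \fint_{B_\rho(z)} \bigl||\nabla v| - \langle|\nabla v|\rangle_{B_\rho(z)}\bigr|\,dy.
\]
The first piece is controlled by the comparison lemma (Lemma \ref{comparison-estimate}); the second by the interior $C^{1,\alpha}$-type estimate (Lemma \ref{Integral-C1alpha}). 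The only technical care is to apply the reverse triangle inequality $\bigl|\,|\nabla v|-|\nabla w|\,\bigr|\leq |\nabla v - \nabla w|$ pointwise, and to track the dependence on $h$ versus quantities independent of $h$.

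For the comparison term, I would pass from the small ball to the larger ball $B_R(z) = B_{h\rho}(z)$ at the cost of a factor $h^n$ and then use Jensen's inequality to raise the integral to the $r$th power:
\[
\fint_{B_\rho(z)} |\nabla u_* - \nabla v|\,dy \;\leq\; h^n \left(\fint_{B_R(z)} |\nabla u_* - \nabla v|^r\,dy\right)^{1/r}.
\]
Applying Lemma \ref{comparison-estimate} with the choice $\gamma = p/r > 1$ (so that $r\gamma = p$ and $1/(r\gamma') = 1/r - 1/p$), and taking $r$-th roots using the subadditivity $(a+b+c)^{1/r} \le a^{1/r}+b^{1/r}+c^{1/r}$ valid for $r>1$, produces exactly the first three terms on the right-hand side of \eqref{mean-OSC-est}, with all the resulting $h$-factors (the $h^n$ above, together with the $(h\kappa)^r$ appearing inside Lemma \ref{comparison-estimate}) absorbed into an $h$-dependent constant $C(h)$.

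For the oscillation of $|\nabla v|$ on $B_\rho(z)$, I would use the elementary fact
\[
\fint_{B_\rho(z)}\bigl||\nabla v|-\langle|\nabla v|\rangle_{B_\rho(z)}\bigr|\,dy
\;\leq\; 2\fint_{B_\rho(z)} \bigl||\nabla v|-|\langle\nabla v\rangle_{B_\rho(z)}|\bigr|\,dy
\;\leq\; 2\fint_{B_\rho(z)} |\nabla v - \langle\nabla v\rangle_{B_\rho(z)}|\,dy,
\]
and then invoke Lemma \ref{Integral-C1alpha}, whose hypothesis $\rho \le R/2$ is satisfied because $R = h\rho$ with $h\ge 2$. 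This yields
\[
\fint_{B_\rho(z)}\bigl||\nabla v|-\langle|\nabla v|\rangle_{B_\rho(z)}\bigr|\,dy
\;\leq\; C_0 h^{-\alpha}\left(\fint_{B_R(z)} |\nabla u_*|^r\,dy\right)^{1/r},
\]
with $C_0$ depending only on $n,\lambda,\Lambda,r$, i.e., independent of $h$. Combining the two estimates gives \eqref{mean-OSC-est}. There is no serious obstacle — the only delicate point is the bookkeeping that isolates the factor $h^{-\alpha}$ from the $h$-blow-up in the comparison term, since later sections will choose $h$ large to make this smoothing term dominate.
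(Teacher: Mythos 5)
Your argument is correct and follows essentially the same route as the paper: split the oscillation of $|\nabla u_{*}|$ via the frozen-coefficient solution $v$ of \eqref{eq1}, control the comparison term with Lemma \ref{comparison-estimate} applied with $\gamma=p/r$ (so $1/(r\gamma')=1/r-1/p$) after enlarging $B_{\rho}(z)$ to $B_{R}(z)$ and using Jensen, and control the oscillation of $\nabla v$ with Lemma \ref{Integral-C1alpha}, keeping the $h$-dependence only in $C(h)$ while the $h^{-\alpha}$ factor carries the $h$-independent constant $C_{0}$. Your bookkeeping of the exponents $\vartheta/r$ and $1/r-1/p$ and of the hypothesis $\rho\leq R/2$ matches the paper's proof.
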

\begin{proof}
Let $\gamma =p/r >1$ and let $v$ be as in \eqref{eq1}. 
 By Lemma \ref{comparison-estimate}, we obtain a constant $C_{0}$ independent of $h$, $d$, and $x_0$ such that 
\begin{equation}\label{eq-comparison}
\begin{split}
\fint_{B_{R}(z)}|\nabla v - \nabla u_{*}|^{r}dx &\leq C_{0} \|\mathbb{A}\|^{1/\gamma'}_{*,2h\kappa d}\left(\fint_{B_{R}(z)}|\nabla u_{*}|^{r\gamma}dy \right)^{1/\gamma}\\
& + C_{0} (h\kappa )^{r}  \| \mathbb{A}\|_{*, 2h\kappa d}^{\vartheta} \fint_{B_{R}(z)}|\chi \nabla u|^{r}dy + C_{0} (h\kappa )^{r}\fint_{B_{R}(z)}G(y)dy, 
\end{split}
\end{equation}
where $\vartheta=\vartheta(r, n)>0$.  For any $0< \rho <2\kappa d$ and $z\in B_{2\kappa d}(x_{0})$ using triangle and H\"older's inequality, we have 
\begin{equation}\label{comparison-setup}
\begin{split}
\fint_{B_{\rho}(z)} &\left||\nabla u_{*}|- \langle|\nabla u_{*}|\rangle_{B_{\rho}(z)}\right|dy \\
&\leq 2\fint_{B_{\rho}(z)} \left||\nabla u_{*}|- \langle |\nabla v|\rangle_{B_{\rho}(z)}\right|dy \\
&\leq 2\fint_{B_{\rho}(z)} (|\nabla u_{*}-\nabla v|+ |\nabla v -\langle\nabla v\rangle_{B_{\rho}(z)}|)dy \\
&\leq 2\left(\fint_{B_{\rho}(z)} |\nabla u_{*}-\nabla v|^{r} dy\right)^{1/r} + 2\fint_{B_{\rho}(z)} |\nabla v -\langle\nabla v\rangle_{B_{\rho}(z)}|dy. 
\end{split}
\end{equation}
Now, since $R = h\rho$,  we apply Lemma \ref{Integral-C1alpha} to control the second term on the right hand side of \eqref{comparison-setup} by 
$$C_{0} h^{-\alpha} 
\left( \fint_{B_{R}(z)}|\nabla u_{*}|^{r}\right)^{1/r}, $$
 and then combining it with \eqref{eq-comparison}  to get the desired  estimate.  
\end{proof}

Now that we have established some estimating devices, we are ready to prove the main theorem of this section which gives a local interior estimate for the solution $u$ of \eqref{interior-pde}. 
\newline

\noindent \begin{proof}[Proof of Theorem \ref{main-interior-local-estimate}]  
Pick $r\in (1, p)$ as in  Corollary \ref{mean-oscillation-interior} to obtain the bound  \eqref{mean-OSC-est}
for any $\rho \in (0, 2\kappa  d)$, $R=h\rho\in (0,  2\kappa h d)$, with $h\geq 2$, and $z\in B_{2\kappa d}(x_{0})$, where $C  = C(h)$, $C_{0}$ is independent of $h$, and $\vartheta = \vartheta (r, n)$.

With   $G$ as in \eqref{defn-G},  we may now take  the supremum over $\rho \in (0, 2\kappa d)$ in \eqref{mean-OSC-est} to obtain the following pointwise estimate: 
\begin{equation}\label{sharp-max}
\begin{split}
\mathfrak{M}^{\#}_{2\kappa d}(|\nabla u_{*}|)(z) &\leq C(h) \bigg[\|\mathbb{A}\|^{1/r- 1/p}_{*,2h\kappa d} [\mathfrak{M}(|\nabla u_{*}|^{p})(z)]^{1/p}\\
&+ \| \mathbb{A}\|_{*, 2h\kappa d}^{\frac{\vartheta}{r}} [\mathfrak{M}(|\chi_{B_{2d}(x_{0})}\nabla u|^{r})(z)]^{1/r} \\
&+[\mathfrak{M}(G)(z)]^{1/r} \bigg]+ C_{0} \,h^{-\alpha}[\mathfrak{M}(|\nabla u_{*}|^{r})(z)]^{1/r},
\end{split}
\end{equation}
which holds for  all $z\in B_{2\kappa d}(x_{0})$.  We take the $q^{th}$  power on both sides of inequality \eqref{sharp-max} and then multiply by the weight function $w$ and integrate over the ball $B_{2d}(x_{0})$. 
Observing that $|\nabla u_{*}|$ is compactly supported in $B_{2d}(x_{0})$, we can apply Lemma \ref{lemma_sharp_phuc} to obtain
\begin{equation*}
\int_{B_{2d}(x_{0})} |\nabla u_{*}|^{q}w \leq C \int_{B_{2\kappa d}(x_{0})}[\mathfrak{M}^{\#}_{2\kappa d}(|\nabla u_{*}|)]^{q} w dx. 
\end{equation*}
It then follows from \eqref{sharp-max} that 
\[
\begin{split}
\int_{B_{2d}(x_{0})} |\nabla u_{*}|^{q}w dx
&\leq C(h) \bigg[\|\mathbb{A}\|^{q/r- q/p}_{*,2h\kappa d} \int_{\mathbb{R}^{n}}[\mathfrak{M}(|\nabla u_{*}|^{p})]^{q/p} w dx  \\
&+  \| \mathbb{A}\|_{*, 2h\kappa d}^{q \frac{\vartheta}{r}}  \int_{\mathbb{R}^{n}}[\mathfrak{M}(|\chi_{B_{2d}(x_{0})}\nabla u|^{r})]^{q/r} w dx \\
&+  \int_{\mathbb{R}^{n}}[\mathfrak{M}(G)]^{q/r} w dx \bigg]
+ C_{0} \,h^{-\alpha q}  \int_{\mathbb{R}^{n}}[\mathfrak{M}(|\nabla u_{*}|^{r})]^{q/r} w dx.
\end{split}
\]
 Noting that $q/p = q_{0}$, and so $w\in A_{q/p}\subset A_{q/r}$ for any $r\in (1, p)$, we may now use the boundedness of Hardy-Littlewood  maximal function on weighted $L^{p}$ spaces, Lemma \ref{weighted_maximal}, to get the inequality  
 \[
\begin{split}
\int_{B_{2d}(x_{0})} |\nabla u_{*}|^{q} w dx& \leq C (h) \bigg[\| \mathbb{A}\|_{*,2h\kappa d}^{q/r - q/p} \int_{B_{2d}(x_{0})}|\nabla u_{*}|^{q} w dx \\
 &+ \|\mathbb{A}\|_{*, 2h\kappa d}^{q \frac{\vartheta}{r}} \int_{B_{2d}(x_{0})}  |\nabla u|^{q} w dy  \\
&+  \int_{B_{2d}(x_{0})}(|{\bf f}|^{q}  + \left|\frac{u}{d}\right|^{q}) w dy \bigg] +C_{0} \ h^{-\alpha q} \int_{B_{2d}(x_{0})} |\nabla u_{*}|^{q} w dy.
 \end{split}
 \]
We now choose $h\geq 2$ large enough that 
\[
C_{0} h^{-\alpha q} \leq \frac{1}{2}. 
\]
This is possible since $C_{0}$ does not depend on $h$ and $\alpha \in (0, 1)$. 
We  can then absorb the third term on the right hand side of the above inequality to the left hand side. Once we do that $h$ will be fixed, and will depend  only on $\lambda, \Lambda, n, q, M_{0}$. 

Set $M = 8h\kappa$. Now for $d\leq \frac{\mathfrak{K}}{M}$, we have 
\[
2h\kappa d \leq \frac{\mathfrak{K}}{8h\kappa} 2h\kappa  = \mathfrak{K}/4. 
\]
As a consequence, whenever $ \|\mathbb{A}\|_{*, \mathfrak{K}/4}\leq \delta$, then 
 \[
\begin{split}
\int_{B_{2d}(x_{0})} |\nabla u_{*}|^{q} w dx& \leq C \left[ \delta^{q/r - q/p} \int_{B_{2d}(x_{0})}|\nabla u_{*}|^{q} w dx 
 +  \delta^{q\frac{\vartheta}{r}} \int_{B_{2d}(x_{0})} |\nabla u|^{q} w dy  \right. \\
&+ \left.  \int_{B_{2d}(x_{0})}\left(|{\bf f}|^{q}  + \left|\frac{u}{d}\right|^{q}\right) w dy\right].
 \end{split}
\]
Let $\delta_{1}>0$ be such that 
\[
C  \, \delta_{1}^{q/r - q/p} \leq 1/2. 
\]
Then for any $\delta_{0} \leq \delta_{1}$ such that $ \|\mathbb{A}\|_{*, \mathfrak{K}/4}\leq \delta_{0}$, we have
\begin{equation}\label{eq16}
\begin{split}
\int_{B_{d}(x_{0})} |\nabla u|^{q} w dx& \leq 
C\ \bigg[\delta_{0}^{q\frac{\vartheta}{r}} \int_{B_{2d}(x_{0})} |\nabla u|^{q} w dy  +  \int_{B_{2d}(x_{0})}(|{\bf f}|^{q}  + \left|\frac{u}{d}\right|^{q}) w dy\bigg],
 \end{split}
\end{equation}
where we used the fact that $\zeta = 1$ on $B_{d}(x_{0})$.
 
Let us recap  that \eqref{eq16} holds for all balls $B_{d}(x_{0})$ such that $d\leq \mathfrak{K}/M$ and $B_{Md}(x_{0})\subset \Omega$.  Next we work on to absorb the term involving $\int_{B_{2d}(x_{0})} |\nabla u|^{q} w dy$ in \eqref{eq16}.  
We will use an argument that was used previously  involving iteration and covering. To that end, let $d < l_{1} < l_{2} < 2d$. Cover $B_{l_{1}}(x_{0})$ by the collection 
$\{B_{i} =  B_{(l_{2}-l_{1})/2}(z_{i})\}$, where $z_{i}\in B_{l_{1}}(x_{0})$, in such a way that each point of $\mathbb{R}^{n}$ belongs to at most $N(n)$ balls of the collection $\{2B_{i}\}$. 
As $z_{i}\in B_{l_{1}}(x_{0})$ we have $2B_{i} = B_{l_{2}-l_{1}}(z_{i})\subset B_{l_{2}}(x_{0})$. Then using \eqref{eq16}, we get 
\[
\begin{split}
\int_{B_{i}} |\nabla u|^{q}w dx &\leq C  \delta_{0}^{q\frac{\vartheta}{r}} \int_{2B_{i}} |\nabla u|^{q}w dx+ C \int_{2B_{i}}\left(|{\bf f}|^{q}  + \Big|\frac{u}{l_{2}-l_{1}}\Big|^{q}\right) w dy.
 \end{split}
 \]
 Summing over $i$, we get 
 \[
 \int_{B_{l_{1}}(x_{0})}|\nabla u|^{q} w dx \leq N (n) C \left( \delta^{q\frac{\vartheta}{r}}_{0}  \int_{B_{l_{2}}(x_{0})} |\nabla u|^{q}w dx  +  \int_{B_{l_{2}}(x_{0})}\left(|{\bf f}|^{q}  + \Big|\frac{u}{l_{2}-l_{1}}\Big|^{q}\right) w dy\right). 
 \]
 Let now $\delta_{2} >0$ be such that 
 \[
 N (n) C \delta_{2}^{q\frac{\vartheta}{r}} \leq \frac{1}{2}
 \]
 and choose $\delta_{0} \leq \min\{\delta_{1}, \delta_{2}\}$. Then when $\|\mathbb{A}\|_{*,  \mathfrak{K}/4}\leq \delta_{0}$,  we have 
 \[
  \int_{B_{l_{1}}(x_{0})}|\nabla u|^{q} w dx \leq \frac{1}{2}  \int_{B_{l_{2}}(x_{0})} |\nabla u|^{q}w dx +  N (n) \ C \, \int_{B_{2d}(x_{0})}\left(|{\bf f}|^{q}  + \Big|\frac{u}{l_{2}-l_{1}}\Big|^{q}\right) w dy,
 \]
which holds for all $d < l_{1} < l_{2} < 2d$. Thus again applying the iteration lemma  (Lemma \ref{iteration}) we obtain that  
 \[
   \int_{B_{d}(x_{0})}|\nabla u|^{q} w dx \leq C \int_{B_{2d}(x_{0})}\left(|{\bf f}|^{q}  + \left|\frac{u}{d}\right|^{q}\right) w dy,
 \]
 and thus proving the theorem. 
\end{proof}
\section{Local boundary estimates} \label{local boundary}
In this section we prove a version of Theorem \ref{main-interior-local-estimate} over balls that intersect the boundary. We do this in two steps. The first step involves obtaining  the estimate for flat domains, and the second is for Lipschitz domains with small Lipschitz constant  using a flattening argument. 
\subsection{Local boundary estimates for equations over flat boundary}
We prove a boundary version of Theorem \ref{main-interior-local-estimate} to obtain a local estimate for solution of equations solved over half balls.  Let us introduce the notations $\mathbb{R}^{n}_{+} = \{(x', x_{n})\in \mathbb{R}^{n}: x_{n}>0\}$,   $\partial \mathbb{R}^{n}_{+} = \{(x', x_{n})\in \mathbb{R}^{n}: x_{n} = 0\}$ and  the half ball $B^{+}(x_{0}) = \mathbb{R}^{n}_{+} \cap B(x_{0})$, for $x_{0} \in \partial \mathbb{R}^{n}_{+}$. The main result of this subsection   is given in the following theorem. 
\begin{theorem}\label{flat-mainestimate}
Let  $x_{0} \in \partial \mathbb{R}^{n}_{+}$ be given. 
Suppose that $\mathfrak{K}>0$ , $M_{0} > 0$,  $1 <q<\infty$, and $w\in A_{q}$ such that $[w]_{A_{q}} \leq M_{0}$. Suppose also that 
$\mathbb{A}$ satisfies  \eqref{meas-symm} and \eqref{eq-Ellipticity},  ${\bf f}\in L^{q}_{w}(B_{\mathfrak{K}}^{+}(x_{0}),\mathbb{R}^{n})$ and  $u\in W^{1, q}_{w}(B_{\mathfrak{K}}^{+}(x_{0})$ is a weak solution of 
\begin{equation}
\label{mainhalfballequ}
\left\{
\begin{aligned}
\text{div}\, \mathbb{A}(x)\nabla u &= \text{div}\, {\bf f}(x)\quad \text{in\, $B_{\mathfrak{K}}^{+}(x_{0}) $}, \\
 u& =0\quad \text{on  $ B_{\mathfrak{K}}(x_0) \cap \partial \mathbb{R}^{n}_{+}$ }. 
 \end{aligned}
\right.
\end{equation}
Then there exist  $\delta_{0} > 0$, $M>2$, and $C>0$  such that 
if $\mathbb{A}$ is $(\delta, \mathfrak{K})$-BMO (over the set $\overline{B^{+}_{\mathfrak{K}}(x_0)}$)  with $\delta \leq \delta_{0}$, then   it holds that 
\[
\int_{B^{+}_{d}(x_{0}) } |\nabla u|^{q} w dx \leq  C \int_{B^{+}_{2d}(x_{0})} \left( |{\bf f}|^{q} + \left| \frac{u}{d}\right|^{q} \right)w dx
\]
for all $d \leq \mathfrak{K} /M$.  The constants $\delta_0$, $M$, and $C$ depend only on $\lambda, \Lambda,  n, q,$ and  $M_0$.
\end{theorem}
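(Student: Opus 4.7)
The plan is to adapt the proof of Theorem \ref{main-interior-local-estimate} to the flat-boundary setting, replacing each ingredient with its boundary counterpart. Throughout I would extend $u$ (and hence the cut-off solution $u_{*}:=u\zeta$ with $\zeta \in C_c^{\infty}(B_{2d}(x_0))$) by zero across $\{x_n=0\}$, which is consistent with the Dirichlet condition in \eqref{mainhalfballequ} and preserves membership in the corresponding weighted Sobolev space. This extension is what allows the sharp maximal function machinery to be applied globally in $\mathbb{R}^n$.

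For the comparison step, fix $z \in B_{2\kappa d}^{+}(x_0)$ and $R=h\rho \in (0,2h\kappa d)$ with $h \geq 2$. If $B_R(z) \Subset \mathbb{R}^n_+$, the interior Lemma \ref{comparison-estimate} and the interior $C^{1,\alpha}$ bound of Lemma \ref{Integral-C1alpha} apply verbatim. Otherwise, I work on $D_R(z) := B_R(z) \cap \mathbb{R}^n_+$ and compare $u_{*}$ with the solution $v$ of
\begin{equation*}
\mathrm{div}\,\langle \mathbb{A}\rangle_{D_R(z)}\nabla v = 0 \text{ in } D_R(z), \qquad v - u_{*} \in W_0^{1,r}(D_R(z)).
\end{equation*}
Because $u_* = 0$ on the flat portion $B_R(z) \cap \partial\mathbb{R}^n_+$, the same boundary condition is inherited by $v$. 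Retracing the steps of Lemma \ref{comparison-estimate} — integrating by parts against test functions compactly supported in $D_R(z)$, applying Poincar\'e and Sobolev on the half-ball, and using the $L^r$ theory for constant-coefficient equations on half-balls with zero Dirichlet data on the flat part — yields the boundary comparison bound (Lemma \ref{flat-comparison-estimate-lemma}), controlling $\fint_{D_R(z)} |\nabla v - \nabla u_{*}|^r$ by $\|\mathbb{A}\|_{*,2h\kappa d}$-dependent quantities together with local $L^r$ integrals of $\mathbf{f}$, $u/d$, and $\nabla u$ over $B_{2d}^+(x_0)$.

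The boundary $C^{1,\alpha}$ decay of $v$ — namely $\fint_{D_\rho(z)}|\nabla v - \langle \nabla v\rangle_{D_\rho(z)}|\,dy \leq C(\rho/R)^\alpha\big(\fint_{D_R(z)}|\nabla u_{*}|^r\,dy\big)^{1/r}$ for $\rho \in (0,R/2]$ — follows from the classical boundary regularity for constant-coefficient equations with zero Dirichlet data on a flat piece of boundary, which can be obtained by odd reflection across $\{x_n = 0\}$ to reduce to Lemma \ref{Integral-C1alpha}; the Gehring improvement and interpolation step carry over unchanged. Combining the comparison estimate with this decay, exactly as in Corollary \ref{mean-oscillation-interior}, gives a pointwise bound on $\mathfrak{M}^{\#}_{2\kappa d}(|\nabla u_{*}|)(z)$ for $z \in B_{2\kappa d}(x_0)$ involving $[\mathfrak{M}(|\nabla u_{*}|^p)]^{1/p}$, $[\mathfrak{M}(|\chi_{B_{2d}^+(x_0)}\nabla u|^r)]^{1/r}$, $[\mathfrak{M}(G)]^{1/r}$, and the small term $C_0 h^{-\alpha}[\mathfrak{M}(|\nabla u_{*}|^r)]^{1/r}$.

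To finish, I apply Lemma \ref{lemma_sharp_phuc} together with the weighted boundedness of $\mathfrak{M}$ on $L^q_w(\mathbb{R}^n)$ (Lemma \ref{weighted_maximal}), noting $w \in A_{q/p} \subset A_{q/r}$ for $r \in (1,p)$. Choosing $h$ large so that $C_0 h^{-\alpha q} \leq 1/2$ absorbs the self-referential term in $|\nabla u_{*}|$, then choosing $\delta_0$ small so that the $\|\mathbb{A}\|_{*,\mathfrak{K}/4}$ factors become negligible, produces
\begin{equation*}
\int_{B_d^+(x_0)}|\nabla u|^q w\,dx \leq C \delta_0^{\sigma}\int_{B_{2d}^+(x_0)}|\nabla u|^q w\,dx + C\int_{B_{2d}^+(x_0)}\big(|\mathbf{f}|^q + |u/d|^q\big)w\,dx
\end{equation*}
for some $\sigma > 0$. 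The final covering/iteration argument using Lemma \ref{iteration} (identical to the closing paragraphs of Theorem \ref{main-interior-local-estimate}) absorbs the first term on the right. The main obstacle is the boundary comparison estimate itself: the dual $W^{-1,r'}$ bounds for $J_1$, $J_2$, $J_3$ rely on $L^r$-regularity for constant-coefficient equations on half-balls with mixed Dirichlet data (zero on the flat part, arbitrary on the curved part), which is strictly harder than the full-ball result of \cite{CW, Fromm}; I would obtain it by odd reflection across $\{x_n=0\}$, converting the problem to an interior $L^r$ problem for a transformed (still constant and elliptic) matrix and invoking the standard theory.
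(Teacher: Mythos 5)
Your skeleton (zero extension across $\{x_n=0\}$, comparison with a constant-coefficient homogeneous problem near the flat boundary, boundary $C^{1,\alpha}$ decay, sharp maximal function plus weighted maximal function, absorption for large $h$ and small $\delta$, covering/iteration) is the same as the paper's, but the two boundary ingredients you delegate to ``odd reflection'' are exactly where the real work lies, and the reflection does not supply them. First, the estimate needed for the dual bounds on $J_1,J_2,J_3$ is not a mixed boundary-value problem: the comparison difference $w=v-u_*$ lies in $W^{1,r}_0$ of the half-ball, so what is required is the pure Dirichlet $W^{1,r}_0$ estimate, with constant independent of the radius, for $\mathrm{div}\,\mathbb{A}_0\nabla w=\mathrm{div}\,{\bf g}$ on half-balls; since a half-ball is a Lipschitz domain with large Lipschitz constant, this is not standard for $r$ far from $2$, and the paper proves it (Lemma \ref{Fromm-lemma}) through convexity and the Green's function estimates of \cite{Fromm} and \cite{Gruter-Widman}. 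Second, odd reflection does not reduce either this or the boundary $C^{1,\alpha}$ decay to the interior case: for a general symmetric constant matrix $\mathbb{A}_0=\langle\mathbb{A}\rangle$ with $a_{in}\neq 0$, the odd reflection of a solution vanishing on $\{x_n=0\}$ solves an equation whose coefficient matrix is $\mathbb{A}_0$ above and $J\mathbb{A}_0J$ below the hyperplane, $J=\mathrm{diag}(1,\dots,1,-1)$ — a piecewise-constant matrix with a jump across $\{x_n=0\}$, not a constant one. Hence neither the constant-coefficient $L^r$ theory of \cite{CW,Fromm} nor Lemma \ref{Integral-C1alpha} applies to the reflected problem; moreover $\nabla v$ genuinely jumps across the interface, so oscillation decay on full balls crossing $\{x_n=0\}$ cannot be read off the interior lemma. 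The paper instead proves the boundary decay directly (Lemma \ref{flat-c1alpha}), using the up-to-the-boundary estimate of \cite[Lemma 3.7]{KZ2}.

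Two further points you pass over. To run the Gehring/interpolation step at the boundary you must first know that the very weak $W^{1,r}$ solution $v$ (with $r<2$) is in $W^{1,2}$ up to the flat boundary; in the interior this was Brezis's result \cite{BRE}, and at the boundary it is precisely Lemma \ref{W1r-solutions-energy-solution}, which the paper proves by a separate duality argument in Appendix \ref{APP-A} — your proposal uses it implicitly without justification. Also, the closing absorption is not literally ``identical'' to the interior one: since the boundary estimate is only available for balls centered on $\{x_n=0\}$ (the paper also recenters the comparison at a boundary point $z$ with $B_R(x)\subset B^{+}_{2R}(z)\subset B_{3R}(x)$, so that only genuine half-balls occur), the covering argument must split $B^{+}_{l_1}(x_0)$ into a thin boundary layer covered by hyperplane-centered balls and an interior region covered by balls handled by Theorem \ref{main-interior-local-estimate}, before applying Lemma \ref{iteration}. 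These last items are fixable details; the reflection step is the genuine gap.
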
 

\subsubsection{Estimates for homogeneous equations with constant coefficients near flat boundary}
Similar to the interior case, we  prove Theorem  \ref{flat-mainestimate} via comparison, comparing $u$  with a regular solution to a suitable homogeneous equation defined over half balls. For that  we will need various estimates for solutions $v$ of the following homogeneous equation: For some $R>0$, $\mathbb{A}_{0}$ an elliptic, symmetric constant matrix, $1<r<\infty$, and $u_{*} \in W^{1, r}(B_{R}^{+}(0))$ such that $u_{*}=0$ on $ B_{R}(0) \cap \partial \mathbb{R}^{n}_{+}$, let $v$ solve  
\begin{equation}\label{flat-eq1}
\left\{\begin{aligned}
\text{div} \, \mathbb{A}_{0}\nabla v &= 0\quad \text{in} \,\,B_{R}^{+}(0),\\
v - u_{*} &\in W^{1, r}_{0}(B_{R}^{+}(0)). 
\end{aligned}\right. 
\end{equation}
  Unlike the interior case however, up to the boundary estimate for the solutions to \eqref{flat-eq1} is not easy to find  in the literature especially when  $1 < r<2$.  Our goal is therefore to collect, and if necessary prove, estimates related to \eqref{flat-eq1} that will be useful for our comparison argument. Along this direction, the first result states that solutions to \eqref{flat-eq1} in fact belong to $W^{1, 2}$ well inside the half ball up to the flat boundary.      
  \begin{lemma}\label{W1r-solutions-energy-solution}
For a give $R>0$, $1 < r<\infty$, suppose that $u_{*} \in W^{1, r}(B_{R}^{+}(0))$ such that $u_{*}=0$ on $ B_{R}(0) \cap \partial \mathbb{R}^{n}_{+}$ and $v$ solves  \eqref{flat-eq1}. 
Then for any $0<\varepsilon < 1$,  $v\in W^{1, 2}(B_{\varepsilon R}^{+}(0))$ with the estimate 
\[
\|v\|_{W^{1, 2}(B_{\varepsilon R}^{+}(0))} \leq C_{\varepsilon} \| v\|_{W^{1, r}(B_{R}^{+}(0))}. 
\]
\end{lemma}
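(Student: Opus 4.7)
The strategy is to extend $v$ to the full ball $B_R(0)$ by odd reflection across the flat boundary, reducing the boundary regularity question to an interior higher-integrability statement of the type already invoked in the proof of Lemma \ref{Integral-C1alpha}.

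Because $u_* = 0$ on $B_R(0)\cap \partial\mathbb{R}^n_+$ (in the trace sense) and $v - u_* \in W^{1,r}_0(B_R^+(0))$, standard density implies that $v$ has vanishing trace on the flat part of $\partial B_R^+(0)$. Setting $S = \mathrm{diag}(1,\ldots,1,-1)$, I would define
\[
\tilde v(x) = \begin{cases} v(x), & x_n > 0, \\ -v(Sx), & x_n < 0, \end{cases}
\]
so that $\tilde v \in W^{1,r}(B_R(0))$ with $\|\tilde v\|_{W^{1,r}(B_R(0))} \leq 2 \|v\|_{W^{1,r}(B_R^+(0))}$. A short computation based on the substitution $y = Sx$ in the lower half-ball, combined with the observation that for any $\phi \in C_c^\infty(B_R(0))$ the odd combination $\Psi(x) := \phi(x) - \phi(Sx)$ restricted to $B_R^+(0)$ has compact support in $B_R(0)$ and vanishes on the flat part (so $\Psi \in W^{1,r'}_0(B_R^+(0))$ by density), shows that $\tilde v$ is a weak solution in $B_R(0)$ of
\[
\mathrm{div}\bigl(\bar{\mathbb{A}}(x)\,\nabla \tilde v\bigr) = 0,
\]
where $\bar{\mathbb{A}}(x) = \mathbb{A}_0$ for $x_n > 0$ and $\bar{\mathbb{A}}(x) = S\mathbb{A}_0 S$ for $x_n < 0$. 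Both constant matrices are symmetric and uniformly elliptic with the same constants $\lambda,\Lambda$ as $\mathbb{A}_0$, so $\bar{\mathbb{A}}$ is a bounded measurable, symmetric, uniformly elliptic coefficient matrix on $B_R(0)$.

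The next step is to apply an interior higher-integrability theorem to $\tilde v$. Because $\bar{\mathbb{A}}$ is constant on each open half-ball, [BRE, Theorem A1.1] immediately gives $\tilde v \in W^{1,s}_{loc}(B_R^{\pm}(0))$ for every $s>1$, in particular $\tilde v \in W^{1,2}_{loc}$ on each side. To promote this into a quantitative $W^{1,2}$ bound across the interface $\{x_n = 0\}$, one treats the reflected equation as a transmission problem with a single flat interface and piecewise constant coefficients: tangential smoothness of $\tilde v$ is transparent (tangential differences of $\tilde v$ still solve the same equation), and the normal derivative is then controlled using the equation itself. Combined with Gehring--Meyers higher integrability on the full ball, this yields
\[
\|\tilde v\|_{W^{1,2}(B_{\varepsilon R}(0))} \leq C_\varepsilon \,\|\tilde v\|_{W^{1,r}(B_R(0))}
\]
for every $\varepsilon \in (0,1)$. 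Restricting back to $B_{\varepsilon R}^+(0)$ and using the preceding bound on $\|\tilde v\|_{W^{1,r}}$ gives the desired estimate.

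The main obstacle is the last step: the jump from $W^{1,r}$ to $W^{1,2}$ across the interface $\{x_n = 0\}$, where $\bar{\mathbb{A}}$ is genuinely discontinuous. Away from the interface, [BRE, Theorem A1.1] (constant coefficients) applies directly and bootstraps $W^{1,r}$ to any $W^{1,s}$. The work lies in producing a \emph{uniform} $W^{1,2}$ bound up to the interface. One can either invoke classical transmission regularity for elliptic operators with a single flat interface (which for piecewise constant coefficients is essentially explicit), or carry out a self-contained duality bootstrap: for $r < 2$ the adjoint Dirichlet problem is well posed by Lax--Milgram on $W^{1,r'}_0$, which by a standard Meyers/Gehring argument yields the quantitative interior $W^{1,2}$ estimate on all of $B_R(0)$.
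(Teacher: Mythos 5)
Your reflection set-up is fine as far as it goes: since $v-u_*\in W^{1,r}_0(B_R^+(0))$ and $u_*$ vanishes on the flat part, $v$ has zero trace there, and the computation with the odd test function $\phi-\phi(S\cdot)$ does show that $\tilde v$ is a $W^{1,r}$ weak solution of $\mathrm{div}(\bar{\mathbb{A}}\nabla\tilde v)=0$ in $B_R(0)$ with $\bar{\mathbb{A}}=\mathbb{A}_0$ above and $S\mathbb{A}_0S$ below the interface. The gap is in the step you yourself flag as the crux. The matrix $\bar{\mathbb{A}}$ jumps by $2|(\mathbb{A}_0)_{in}|$, $i<n$, across $\{x_n=0\}$; this jump can be of order $\Lambda$, so $\bar{\mathbb{A}}$ is a genuinely discontinuous, merely bounded measurable coefficient (in particular \emph{not} small BMO). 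For such coefficients the implication ``very weak $W^{1,r}$ solution, $r<2$, $\Rightarrow$ $W^{1,2}_{loc}$ with an estimate'' is exactly as deep as the lemma itself and is false for general bounded measurable coefficients (Serrin-type pathological solutions), so neither Gehring--Meyers higher integrability nor \cite[Theorem A1.1]{BRE} (which needs continuity of the coefficients, or constant coefficients) can be applied across the interface. Your proposed repairs do not close this: classical transmission regularity (tangential difference quotients plus recovering $\partial_n\tilde v$ from the equation) is a statement about \emph{energy} ($H^1$) solutions, which is precisely what you do not yet have; and the duality variant is misstated --- Lax--Milgram gives well-posedness only in $W^{1,2}_0$, and a Meyers/Gehring perturbation only reaches exponents near $2$, whereas a Brezis-type duality bootstrap starting from small $r$ requires $W^{1,s'}$ solvability of the Dirichlet problem for the operator $\mathrm{div}(\bar{\mathbb{A}}\nabla\cdot)$ for \emph{large} $s'$, i.e. a nontrivial $W^{1,p}$ transmission theorem for piecewise constant coefficients that you neither prove nor cite.

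For comparison, the paper (Appendix A) never reflects: it runs Brezis's duality bootstrap directly on the half ball, using the $W^{1,s'}$ solvability of $\mathrm{div}\,\mathbb{A}\nabla w=\mathrm{div}\,\mathbf g$ on the half ball (a quasiconvex domain, Lemma \ref{a-Jia-Li-Wang07}), testing the $w$-equation with $\zeta v$ and the $v$-equation with $\zeta w$, estimating the commutator terms $I_1,I_2,I_3$, and iterating Sobolev exponents $r\to r^*\to\cdots$ until reaching $2$; this also covers variable coefficients with small BMO, where no reflection is available. If you want to keep a reflection proof for the constant-coefficient statement, reverse the order of operations: first apply the linear change of variables $x\mapsto \mathbb{A}_0^{-1/2}x$ (plus a rotation) reducing to the Laplacian on a half-ellipsoid with flat boundary through the origin, then oddly reflect the resulting harmonic function; Weyl's lemma applies to $W^{1,1}_{loc}$ distributional solutions of Laplace's equation, and interior estimates for harmonic functions give the quantitative $W^{1,2}$ bound on $B^+_{\varepsilon R}(0)$. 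As written, however, your argument has an unsupported step at the exact point where the difficulty lies.
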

The interior version of Lemma \ref{W1r-solutions-energy-solution} is proved in \cite{BRE}, and it turns out that  
using the same duality argument as in \cite{BRE}, one can establish Lemma \ref{W1r-solutions-energy-solution}. We are able to actually prove a  more general version of Lemma 6.2 in which we allow the uniformly elliptic matrix $\mathbb{A}_{0}$ to have measurable coefficients with small BMO seminorm. We include a statement and proof in Appendix \ref{APP-A} for future reference.

We also need a {\it global}   estimate for the solution of \eqref{flat-eq1} that holds for all $1 < r < \infty$, where the constant does not depend on the radius of the half ball, $R$. 
This result is given in the following lemma and can be proved along the same line of proof of \cite[Corollary 1]{Fromm} (for Poisson equation), which utilized estimates for the corresponding Green's function. We note that the boundary of the half ball may have a large Lipschitz constant, but the convexity of the domain plays a crucial role to obtain the desired estimate.

\begin{lemma}
\label{Fromm-lemma}
Suppose that $\mathbb{A}_{0}$ is an elliptic, symmetric constant matrix. Then there exists a positive constant $C$ such that:  for any half ball $B^{+}_{R}(x_0)$ of radius $R$ and $1< r< \infty$, any 
${\bf f} \in L^{r}(B^{+}_{R}(x_0);\mathbb{R}^{n})$, there exists a unique solution $v\in W^{1,r}_{0}(B^{+}_{R}(x_0))$  to the equation 
\begin{equation}\label{fromm-eqn}
\text{div}\, \mathbb{A}_{0}\nabla v = \text{div}\,{\bf f} \quad \text{in } B_R^{+}(x_0)
\end{equation}
such that  
\begin{equation}\label{fromm-est}
\|\nabla v\|_{L^{r} (B_{R}^{+}(x_0))} \leq C\|{\bf f}\|_{L^{r}(B_{R}^{+}(x_0))}. 
\end{equation}
The constant $C$ is independent of  $R$ and $x_0$. 
\end{lemma}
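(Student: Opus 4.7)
The plan is to reduce the equation to the Poisson equation on a bounded convex domain via an affine change of variables, and then invoke the $W^{1,r}$ estimate of Fromm \cite{Fromm} for the Dirichlet Poisson problem on convex domains. By translation invariance we may assume $x_{0}=0$. Since $\mathbb{A}_{0}$ is symmetric and uniformly elliptic with constants $\lambda,\Lambda$, its unique symmetric positive-definite square root $B:=\mathbb{A}_{0}^{1/2}$ has eigenvalues in $[\sqrt{\lambda},\sqrt{\Lambda}]$. Setting $\tilde v(y):=v(By)$ and $\mathbf{g}(y):=B^{-1}\mathbf{f}(By)$ transforms \eqref{fromm-eqn} into
\[
\Delta\tilde v=\mathrm{div}\,\mathbf{g}\quad\text{on}\quad \tilde\Omega_{R}:=B^{-1}\bigl(B^{+}_{R}(0)\bigr),\qquad \tilde v\in W^{1,r}_{0}(\tilde\Omega_{R}),
\]
and the $L^{r}$ norms of $\mathbf{f},\nabla v$ and of $\mathbf{g},\nabla\tilde v$ are comparable with constants depending only on $\lambda,\Lambda$. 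The crucial geometric point is that $\tilde\Omega_{R}$ is the intersection of an ellipsoid with a half-space, hence \emph{convex}, and its Lipschitz constant is bounded in terms of the condition number of $B$, i.e.\ of $\lambda,\Lambda$, alone.

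Next, the dilation $y\mapsto y/R$ leaves the inequality $\|\nabla\tilde v\|_{L^{r}}\le C\|\mathbf{g}\|_{L^{r}}$ homogeneous and sends $\tilde\Omega_{R}$ to $\tilde\Omega_{1}$, a bounded convex domain whose geometry is determined solely by $\lambda,\Lambda$. Applying \cite{Fromm} to the Poisson equation on $\tilde\Omega_{1}$ yields a unique $\tilde v\in W^{1,r}_{0}(\tilde\Omega_{1})$ with
\[
\|\nabla\tilde v\|_{L^{r}(\tilde\Omega_{1})}\le C(n,r,\lambda,\Lambda)\,\|\mathbf{g}\|_{L^{r}(\tilde\Omega_{1})}
\]
for every $1<r<\infty$. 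Unwinding the scaling and the change of variables then produces \eqref{fromm-est} with $C$ independent of $R$ and $x_{0}$. Uniqueness is automatic from linearity together with the linear estimate.

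For completeness, Fromm's argument proceeds by constructing the Dirichlet Green's function $G(x,y)$ on the convex domain and showing that $\nabla_{x}\nabla_{y}G$ is a standard Calder\'on--Zygmund kernel globally. This rests on Grisvard's $H^{2}$-regularity for the Poisson problem on convex domains, applied in both variables by a duality/reflection argument. Once the pointwise bound $|\nabla_{x}\nabla_{y}G(x,y)|\lesssim|x-y|^{-n}$ is available up to the boundary, standard singular integral theory applied to the representation $\nabla\tilde v(x)=\int \nabla_{x}\nabla_{y}G(x,y)\,\mathbf{g}(y)\,dy$ delivers the $L^{r}$ bound for all $1<r<\infty$.

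The main obstacle is the invocation in the second paragraph: one needs Fromm's estimate on convex domains in a form that is genuinely scale invariant and depends on the geometry only through the Lipschitz constant of the boundary, so that the reductions $y\mapsto By$ and $y\mapsto y/R$ produce a constant independent of both $R$ and $x_{0}$. This is exactly the feature that fails for general (non-convex) Lipschitz domains, where, as noted in the introduction, the Jerison--Kenig examples restrict the $L^{r}$ range; it is convexity of the half-ball (preserved by the affine map $B^{-1}$) that rescues the full range $1<r<\infty$ here.
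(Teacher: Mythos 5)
Your argument is correct, but it follows a genuinely different route from the paper. You transform the operator away: with $B=\mathbb{A}_0^{1/2}$ the substitution $\tilde v(y)=v(By)$ turns the equation into the Poisson equation $\Delta\tilde v=\mathrm{div}\,\mathbf{g}$ on the convex domain $B^{-1}(B_R^{+}(x_0))$ (an ellipsoid intersected with a half-space), and you then quote Fromm's convex-domain result for the Laplacian as a black box, removing the $R$- and $x_0$-dependence by the same dilation/translation scaling the paper uses. The paper instead keeps the operator $\mathbb{A}_0$ and the half-ball: it invokes the Green's function of $-\mathrm{div}\,\mathbb{A}_0\nabla$ on $B_1^{+}(0)$, observes that the pointwise Green's function estimates underlying Fromm's proof (his Proposition 1) persist for general symmetric constant elliptic matrices by Gr\"uter--Widman, reruns Fromm's weak-$(1,1)$ plus interpolation-and-duality scheme to get the $L^r$ bound on the unit half-ball, and then rescales. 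What your route buys is that you never need to justify extending Fromm's Green's function estimates beyond the Laplacian; what the paper's route buys is that the constant is manifestly uniform in $\mathbb{A}_0$ (depending only on $\lambda,\Lambda,n,r$), which is what is actually used later when the lemma is applied with $\mathbb{A}_0=\langle\mathbb{A}\rangle_{B^{+}_{2R}(z)}$ varying with $R$ and $z$. In your reduction this uniformity requires the additional (true, but worth stating) observation that Fromm's constant is uniform over the family of convex domains $\mathbb{A}_0^{-1/2}\bigl(B_1^{+}(0)\bigr)$ as $\mathbb{A}_0$ ranges over matrices with ellipticity constants $\lambda,\Lambda$; as the lemma is literally stated (independence of $R$ and $x_0$ only) your proof suffices. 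A minor caveat: your sketch of Fromm's internal mechanism (a global Calder\'on--Zygmund bound on $\nabla_x\nabla_y G$ via Grisvard $H^2$-regularity) is not quite how Fromm argues -- he derives a weak-$(1,1)$ bound from pointwise Green's function estimates and then interpolates and dualizes -- but since you use his Corollary only as a cited black box, this does not affect the validity of your proof.
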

\begin{proof} We first consider the case $x_0=0$ and $R=1$, i.e., the equation
\begin{equation}\label{wB1}
\left\{
\begin{aligned}
\text{div}\, \mathbb{A}_{0}\nabla w &= \text{div}\,{\bf g} \quad \text{in } B_{1}^{+}(0)\\
w&\in W^{1,r}_{0}(B_{1}^{+}(0)).
\end{aligned}\right. 
\end{equation}
Let $G(x,y)$ be the Green's function for the elliptic operator $- \text{div} \mathbb{A}_{0}\nabla\, \cdot$ in $B_1^{+}(0)$. Then the function
$$w(x)= \int_{B_1^{+}(0)} \nabla_{y} G(x,y) \cdot {\bf g}(y) dy $$
solves  the equation $\text{div}\, \mathbb{A}_{0}\nabla w = \text{div}\,{\bf g}$ in  $\mathcal{D}'(B_{1}^{+}(0)).$  Moreover, by the convexity of $B_1^+(0)$ one has the following weak-type bound
\begin{equation}\label{weak1-1}
 t |\{x\in B_1^+(0): |\nabla w(x)| > t\}| \leq C \|{\bf g}\|_{L^{1}(B_{1}^{+}(0))}\quad \forall t>0.
\end{equation}
Estimate \eqref{weak1-1} was obtained for the standard Laplacian, i.e., $\mathbb{A}_0=\mathbb{I}$, in \cite{Fromm} . One of the main ingredients in the proof of \eqref{weak1-1} in \cite{Fromm}  is pointwise estimates for the Green's function and its derivatives stated in \cite[Proposition 1]{Fromm}.  For general elliptic symmetric constant matrix
$\mathbb{A}_0$,  \cite[Proposition 1]{Fromm} still holds true by the work of Gr\"uter and Widman \cite{Gruter-Widman} and  thus the same  argument also yields \eqref{weak1-1}.
Using \eqref{weak1-1}, interpolation and duality 
we obtained a unique solution to equation  \eqref{wB1} along with the estimate 
\begin{equation}\label{strongr}
 \|w\|_{W_{0}^{1, r} (B_{1}^{+}(0))} \leq C \|{\bf g}\|_{L^{r}(B_{1}^{+}(0))}; 
\end{equation}
see \cite[Corollary 1]{Fromm}. 

 Next, noting that  the map $x\mapsto (x-x_0)/R$ is a one-to-one transformation of $B_R^{+}(x_0)$ onto $B_1^{+}(0)$, for a given ${\bf f}\in L^r(B^{+}_R(x_0))$, we define
${\bf g}(y)={\bf f}(R y+x_0)$ for $y\in B^{+}_1(0)$ and let $w$ be the unique solution of \eqref{wB1}. Then by the transformation $v(x)=R w((x-x_0)/R)$ and ${\bf f}(x)={\bf g}((x-x_0)/R)$ for $x\in B_{R}^{+}(x_0)$ we obtain a unique solution 
$v$ of \eqref{fromm-eqn} such that \eqref{fromm-est} holds with a constant $C$ independent of $R$ and $x_0$.
\end{proof}
\begin{remark}
 It is worth mentioning that the existence and uniqueness of a solution to equation  \eqref{wB1} along with the bound \eqref{strongr}  can also be obtained from \cite[Theorem 1.1]{Jia-Li-Wang}. 
\end{remark}

We now state the boundary analogue of Lemma \ref{Integral-C1alpha} that gives quantitative $C^{1,\alpha}$ regularity up to the boundary  for solutions of homogeneous equations. 
\begin{lemma} \label{flat-c1alpha}
Suppose that $R > 0$,  $x\in \mathbb{R}^{n}_{+}$ such that $B_{R}(x)\cap \partial \mathbb{R}^{n}_{+} \neq \emptyset$. Suppose also that $r > 1$,  $u_{*}\in W^{1, r}(B_{3R}(x) )$ and $u = 0$ on $B_{3R}(x)\cap \partial \mathbb{R}^{n}_{+} $.  
 Assume that $z\in \partial \mathbb{R}^{n}_{+}$ such that 
\[
B_{R}(x) \subset B_{2R}(z) \subset B_{3R}(x). 
\]
Then for any elliptic symmetric constant matrix $\mathbb{A}_{0}$ with constants of ellipticity 
$\lambda$ and $\Lambda$, there exist a constant $\alpha \in (0, 1)$ and a constant $C>0$ such that  if  $v$ solves 
\begin{equation}\label{flat-homo}
\left\{
\begin{aligned}
\text{div}\,\mathbb{A}_{0}\nabla v &= 0 \quad \text{in $B_{2R}^{+}(z)$},\\
v- u_{*}&\in W^{1, r}_{0}(B_{2R}^{+}(z)),
\end{aligned}\right. 
\end{equation}
then 
\[
\fint_{B_{\rho}(x)} |\nabla v - \langle \nabla v\rangle_{B_{\rho}(x)} | dy \leq C \left(\frac{\rho}{R}\right)^{\alpha} \left(\fint_{B_{3R}(x)} |\nabla u_{*}|^{r} dy\right)^{1/r}
\]
for any $0 <\rho < R/4$. The constants $\alpha=\alpha(n, \lambda, \Lambda)$ and $C=C(n, r, \lambda, \Lambda)$.
\end{lemma}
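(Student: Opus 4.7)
The argument parallels the interior estimate of Lemma \ref{Integral-C1alpha}, with three boundary-specific replacements for the three ingredients used there. Since we only know a priori that $v\in W^{1,r}(B_{2R}^+(z))$ for some $r>1$, the plan is first to bootstrap to $W^{1,2}$, then to apply a boundary $C^{1,\alpha}$ estimate for constant-coefficient homogeneous equations with zero flat Dirichlet data, and finally to pass from $L^2$ to $L^r$ via Gehring's lemma together with interpolation and iteration.

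Since $u_*=0$ on $B_{3R}(x)\cap\partial\mathbb{R}^n_+\supset B_{2R}(z)\cap\partial\mathbb{R}^n_+$ and $v-u_*\in W^{1,r}_0(B_{2R}^+(z))$, the trace of $v$ on the flat part of $\partial B_{2R}^+(z)$ also vanishes, so Lemma \ref{W1r-solutions-energy-solution} upgrades $v$ to $W^{1,2}(B_{2\varepsilon R}^+(z))$ for any $\varepsilon<1$. Extending $v$ by zero to the lower half-space (which preserves Sobolev regularity thanks to the vanishing trace), classical boundary $C^{1,\alpha}$ regularity for $\text{div}\,\mathbb{A}_0\nabla v=0$ with zero flat Dirichlet data, obtained, e.g., by odd reflection across $\partial\mathbb{R}^n_+$ after diagonalising $\mathbb{A}_0$, yields $\alpha=\alpha(n,\lambda,\Lambda)\in(0,1)$ and $C>0$ such that for $0<\rho<R/4$,
\[
\fint_{B_{\rho}(x)}\bigl|\nabla v-\langle\nabla v\rangle_{B_{\rho}(x)}\bigr|\,dy\leq C\left(\frac{\rho}{R}\right)^{\alpha}\left(\fint_{B_{R/2}^+(z^*)}|\nabla v|^2\,dy\right)^{1/2},
\]
where $z^*$ is the projection of $x$ onto $\partial\mathbb{R}^n_+$.

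To replace the $L^2$ average on the right by an $L^r$ average, I would mimic the intermediate-exponent step of Lemma \ref{Integral-C1alpha}. A boundary Caccioppoli inequality (using $v=0$ on the flat boundary) combined with Gehring's lemma produces some $s>2$ and $C>0$ with
\[
\left(\fint_{B^+_l(y)}|\nabla v|^s\,dy\right)^{1/s}\leq C\left(\fint_{B^+_{2l}(y)}|\nabla v|^2\,dy\right)^{1/2}
\]
for every pair of concentric half-balls $B^+_l(y)\subset B^+_{2l}(y)\subset B^+_{2R}(z)$. Covering $B^+_{l_1}(z^*)$ by finitely overlapping half-balls of radius $(l_2-l_1)/2$, interpolating $L^2$ between $L^1$ and $L^s$, applying Young's inequality, and invoking the iteration Lemma \ref{iteration} verbatim as in the interior proof, one obtains
\[
\left(\fint_{B^+_{R/2}(z^*)}|\nabla v|^2\,dy\right)^{1/2}\leq C\left(\fint_{B^+_{2R}(z)}|\nabla v|^r\,dy\right)^{1/r}.
\]
Finally, writing $w:=v-u_*\in W^{1,r}_0(B^+_{2R}(z))$ and noting that $w$ solves $\text{div}\,\mathbb{A}_0\nabla w=-\text{div}\,\mathbb{A}_0\nabla u_*$ in $B^+_{2R}(z)$, Lemma \ref{Fromm-lemma} together with the triangle inequality gives $\|\nabla v\|_{L^r(B^+_{2R}(z))}\leq C\|\nabla u_*\|_{L^r(B^+_{2R}(z))}$. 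Since $B^+_{2R}(z)\subset B_{3R}(x)$, chaining these inequalities produces the desired bound.

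The main technical obstacle I anticipate is the appeal to Lemma \ref{Fromm-lemma} for $r$ arbitrarily close to $1$ with a constant independent of $R$; this is precisely the content of that lemma and rests on the convexity of the half-ball and the Green's-function bounds invoked in its proof. Everything else is a careful boundary transcription of Lemma \ref{Integral-C1alpha}, with the only real care being to ensure that the covering half-balls remain inside $B^+_{2R}(z)\subset B_{3R}(x)$ throughout the iteration.
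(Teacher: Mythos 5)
Your proposal follows essentially the same route as the paper's own proof: upgrade $v$ to $W^{1,2}$ near the flat boundary via Lemma \ref{W1r-solutions-energy-solution}, invoke a boundary $C^{1,\alpha}$ oscillation-decay estimate for the constant-coefficient problem with zero data on the flat part (the paper quotes \cite[Lemma 3.7]{KZ2}, where you argue by a half-space-preserving change of variables and odd reflection), pass from the $L^{2}$ to the $L^{r}$ average of $\nabla v$ by Gehring's lemma, interpolation and the iteration Lemma \ref{iteration}, and conclude with Lemma \ref{Fromm-lemma} together with the inclusion $B^{+}_{2R}(z)\subset B_{3R}(x)$. Since this reproduces the paper's argument step for step, the proposal is correct to the same extent as the published proof.
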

\begin{proof}
We begin by noting that $v$, a solution to \eqref{flat-homo},  is unique.  Moreover, by Lemma \ref{W1r-solutions-energy-solution}, $v\in W^{1, 2}(B_{\epsilon(2R)}^{+} (z))$  for any $\epsilon \in (0, 1)$. As a consequence if $\bar{v}$ solves 
\[
\left\{\begin{aligned}
\text{div} \, \mathbb{A}_{0}\nabla \bar{v} &= 0\quad \text{in $B_{R/4} (x)\cap \mathbb{R}^{n}_{+}$},\\
\bar{v}  - v &\in W ^{1, 2}_{0}(B_{R/4} (x)\cap \mathbb{R}^{n}_{+}),
\end{aligned}\right. 
\]
then $v = \bar{v}$  in $B_{R/4} (x)\cap \mathbb{R}^{n}_{+}$.  Thus, applying \cite[Lemma 3.7]{KZ2}  there exist  constants $C = C(n, \lambda, \Lambda)$ and $\alpha  = \alpha(n, \lambda, \Lambda)$ such that 
\begin{equation} \label{upto192}
\fint_{B_{\rho}(x)\cap \mathbb{R}^{n}_{+}} |\nabla v - \langle\nabla v \rangle _{B_{\rho}(x)}|dy \leq C \left( \frac{\rho}{R}\right)^{\alpha}\left( \fint_{B_{R/4}(x)\cap \mathbb{R}^{n}_{+}}|\nabla v|^{2} dy\right)^{1/2}
\end{equation}
 for any $0<\rho<R/192.$ For a possibly different constant $C$, inequality \eqref{upto192} is satisfied for all $ \rho \in (R/192, R/4]$ as well. Next we  show that 
 \begin{equation}\label{reverse-holder-flat}
 \left (\fint_{B_{R/4}(x)\cap \mathbb{R}^{n}_{+}}|\nabla v|^{2} dy\right)^{1/2} \leq C  \fint_{B_{R/2} (x)\cap \mathbb{R}^{n}_{+}} |\nabla v| dy.  
  \end{equation}
 Once we have \eqref{reverse-holder-flat},  then it follows from H\"older's inequality and Lemma \ref{Fromm-lemma} that 
\[
\begin{split}
  \left (\fint_{B_{R/2}(x)\cap \mathbb{R}^{n}_{+}}|\nabla v|^{2} dy\right)^{1/2}&\leq C  \fint_{B_{R} (x)\cap \mathbb{R}^{n}_{+}} |\nabla v| dy  \leq C \left(\fint_{B_{2R}^{+} (z)} |\nabla v|^{r} dy\right)^{1/r}\\
  & \leq C \left(\fint_{B^{+}_{2R} (z)} |\nabla u_{*}|^{r} dy\right)^{1/r} \leq C  \left(\fint_{B_{3R} (x)\cap 
	\mathbb{R}^{n}_{+}} |\nabla u_{*}|^{r} dy\right)^{1/r}, 
 \end{split}
 \]
 which completes the proof of the lemma. 
 To prove \eqref{reverse-holder-flat}, we proceed as in the proof of Lemma \ref{Integral-C1alpha} by interpolating between $L^{1}$ and $L^{p}$ for some $p>2$. To sketch the argument,  by higher integrability result, we have that there exists  $s>2$ and a constant $C>0$ such that 
 \[
 \left(\fint_{B_{\rho}(0)\cap \mathbb{R}^{n}_{+}}|\nabla v|^{s}dy \right)^{1/s} \leq C \left(\fint_{B_{2\rho}(0)\cap \mathbb{R}^{n}_{+}} |\nabla v|^{2}  dy \right)^{1/2}
 \] 
 for any ball $B_{\rho}(0)\subset B_{2\rho}(0) \subset B_{R}(x)$. Here the constant $C$ is independent of $0$ and $\rho$.  
 Rewriting the above inequality as 
  \[
 \left(\fint_{B_{\rho}(0)}|\nabla v|^{s} \chi_{\mathbb{R}^{n}_{+}}(y)dy \right)^{1/s} \leq C \left(\fint_{B_{2\rho}(0)} |\nabla v|^{2} \chi_{\mathbb{R}^{n}_{+}}(y)  dy \right)^{1/2}
 \] 
  for any ball $B_{\rho}(0)\subset B_{2\rho}(0) \subset B_{R}(x)$
we may  now use the iterative argument that was used in the proof of Lemma \ref{Integral-C1alpha} to obtain inequality \eqref{reverse-holder-flat}. 
 \end{proof}
\subsubsection{Mean-oscillation estimates over half balls}
\subsection*{Flat boundary set up} 
Suppose that $\mathfrak{K}>0$ , $M_{0} > 0$,  $1 <q<\infty$, and $w\in A_{q}$ such that $[w]_{A_{q}} \leq M_{0}$. Fix $x_{0} \in \partial \mathbb{R}^{n}_{+}$, and introduce the half ball $B_{\mathfrak{K} }^{+}(x_{0}) = \mathbb{R}^{n}_{+} \cap B_{\mathfrak{K} }(x_{0})$.  
Assume that ${\bf f} \in L^{q}_{w}(B_{\mathfrak{K} }^{+} (x_{0}))$ and $u \in  W^{1, q}_{w}(B^{+}_{\mathfrak{K} } (x_{0}))$ that solves equation \eqref{mainhalfballequ}.

With $\kappa > \sqrt{n}$ as in Lemma \ref{lemma_sharp_phuc},  let $B_{8 h \kappa  d}(x_{0}) \subset B_{\mathfrak{K} }(x_{0})$
 where $d>0$ and $h\geq 4$     will be determined later. 
We also use a cut-off function  $ \zeta \in C_{c}^{\infty}(B_{2d}(x_{0}))$  such that $0\leq \zeta\leq 1 $, $\zeta=1$ in $B_{d}(x_{0})$,  $|\nabla \zeta| \leq \frac{c}{d}$, and $|\nabla^{2} \zeta| \leq \frac{c}{d^{2}}$.  As before, set \[ u_{*} = u \zeta. \]
 Note that ${\bf f} \in L^{p}(B_{\mathfrak{K} }^{+}(x_{0}))$ and $u\in W^{1, p}(B^{+}_{\mathfrak{K} }(x_{0}))$ for $p = \frac{q}{q_{0}} > 1$ as in Corollary \ref{Lebesgue-vs-weighted}.  Moreover, since $u = 0$ on $ B_{\mathfrak{K} }(x_{0}) \cap \partial \mathbb{R}^{n}_{+}$ we can extend $u$ to be zero on $B_{\mathfrak{K} }^{-} (x_{0})$ to get that $u\in W^{1, p}(B_{\mathfrak{K} } (x_{0}))$. We also extend {\bf f} to be zero on $B_{\mathfrak{K} }^{-} (x_{0})$. Henceforth in this subsection we  work with these extended functions which will still be denoted by $u$ and {\bf f}.
 \begin{lemma}\label{flat-comparison-estimate-lemma}
Let $\gamma, r > 1$ be such that $1<\gamma r \leq p$.   Then there exist constants $C > 0$ and $\vartheta>0$ such that for $z\in \partial \mathbb{R}^{n}_{+}\cap B_{2\kappa d}(x_0)$, $0<R<2h\kappa d$,  $h\geq 4$, if  
 $v\in W^{1, r}(B^{+}_{2R}(z))$ is the solution of 
\begin{equation}
\label{vhalfball}
\left\{
\begin{aligned}
\text{div}\, \langle \mathbb{A}\rangle_{B^{+}_{2R}(z)} \nabla v &= 0 \quad \text{in $B^{+}_{2R}(z)$},\\
v- u_{*}&\in W^{1, r}_{0}(B^{+}_{2R}(z)), 
\end{aligned}\right. 
\end{equation}
then one has 
 \begin{equation}\label{flat-comparison-estimate}
\begin{split}
\left(\fint_{B^{+}_{2R}(z)} | \nabla v - \nabla u_{*}|^{r}dy \right)^{1/r}& \leq C  \|\mathbb{A}\|_{*_{\partial}, 4h\kappa d}^{1/r\gamma'}\left(\fint_{B^{+}_{2R}(z)} |\nabla u_{*}|^{r\gamma}\right)^{1/(r\gamma)}\\
&\quad +  C\, (h\kappa) \|\mathbb{A}\|_{*_{\partial}, 4h\kappa d} ^{\frac{\vartheta}{r}} \left( \fint_{B^{+}_{2R}(z)} |\nabla u|^{r} \chi_{B_{2d}(x_{0}) }\right)^{1/r} \\
&\quad + C\,(h\kappa) \left( \fint_{B^{+}_{2R}(z)} \left({\bf f}|^{r} + \left|\frac{u}{d}\right|^{r}\right)\chi_{B_{2d}(x_{0})}   \right)^{1/r}. 
\end{split}
\end{equation}
Here 
\[
 \|\mathbb{A}\|_{*_{\partial}, 4h\kappa d} = \sup_{y\in \partial \mathbb{R}^{n}_{+}\cap B_{2\kappa d}(x_0)} \  \sup_{0 < \rho < 4h\kappa d} \fint_{B^{+}_{\rho}(y)} |\mathbb{A}(x) - \langle \mathbb{A}  \rangle_{B^{+}_{\rho}(y)}|dx, 
\]
and the constants $C=C(n,r,\gamma, \lambda, \Lambda)$ and $\vartheta= \vartheta(r,n)$. 
 \end{lemma}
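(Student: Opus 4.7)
The plan is to follow, line by line, the interior argument of Lemma \ref{comparison-estimate}, with the half-ball versions of the relevant tools supplied earlier in the section. Setting $w := v - u_{*} \in W^{1,r}_{0}(B^{+}_{2R}(z))$ and recalling that $u$ and ${\bf f}$ (extended by zero across $\{x_n=0\}$) solve $\text{div}\,\mathbb{A}\nabla u = \text{div}\,{\bf f}$ on $B_{\mathfrak{K}}^{+}(x_{0})$, the product rule for $u_{*}=u\zeta$ gives the same decomposition of the equation for $w$ as in \eqref{eq5}, namely
\[
\text{div}\,\langle\mathbb{A}\rangle_{B^{+}_{2R}(z)}\nabla w
 = \text{div}\!\left[(\mathbb{A}-\langle\mathbb{A}\rangle_{B^{+}_{2R}(z)})\nabla u_{*} - {\bf f}\zeta - \mathbb{A}\nabla\zeta\,u\right] - \langle\mathbb{A}\rangle_{B^{+}_{2R}(z)}\nabla u\cdot\nabla\zeta - (\mathbb{A}-\langle\mathbb{A}\rangle_{B^{+}_{2R}(z)})\nabla u\cdot\nabla\zeta + {\bf f}\cdot\nabla\zeta,
\]
with zero trace on the whole of $\partial B^{+}_{2R}(z)$ (including the flat piece, thanks to $u\equiv 0$ on $\{x_n=0\}$).

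The next step is to obtain an $L^{r}$ estimate for $\nabla w$ in terms of the $L^{r}$-norms of the divergence-form data and the $W^{-1,r'}(B^{+}_{2R}(z))$-norms of the three non-divergence terms. Lemma \ref{Fromm-lemma} as stated handles only divergence data, so I would upgrade it to accept $W^{-1,r'}$ data by duality: given a test field $G\in L^{r'}(B^{+}_{2R}(z))$, solve the adjoint constant-coefficient problem $\text{div}\,\langle\mathbb{A}\rangle^{T}\nabla\phi = \text{div}\,G$ in $B^{+}_{2R}(z)$ with $\phi\in W^{1,r'}_{0}(B^{+}_{2R}(z))$, using Lemma \ref{Fromm-lemma} at exponent $r'$ to get $\|\nabla\phi\|_{L^{r'}}\le C\|G\|_{L^{r'}}$ with a constant independent of $R$. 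Testing the $w$-equation against $\phi$ turns each $W^{-1,r'}$ pairing into a clean integral. From there each term is treated exactly as in the proof of Lemma \ref{comparison-estimate}: H\"older between $L^{r\gamma'}$ and $L^{r\gamma}$ on $(\mathbb{A}-\langle\mathbb{A}\rangle)\nabla u_{*}$ produces the factor $\|\mathbb{A}\|_{*_{\partial},4h\kappa d}^{1/(r\gamma')}$ (note that $2R<4h\kappa d$ and $z\in\partial\mathbb{R}^{n}_{+}\cap B_{2\kappa d}(x_0)$, so this average is one of those controlled by $\|\mathbb{A}\|_{*_{\partial},4h\kappa d}$); the mixed term $(\mathbb{A}-\langle\mathbb{A}\rangle)\nabla u\cdot\nabla\zeta$ is estimated by H\"older and the Sobolev embedding $W^{1,r'}_{0}(B^{+}_{2R}(z))\hookrightarrow L^{\theta r'}$, after using reverse H\"older on $|\mathbb{A}-\langle\mathbb{A}\rangle|$ to extract the power $\|\mathbb{A}\|_{*_{\partial}}^{\vartheta/r}$; and the two remaining terms $\langle\mathbb{A}\rangle\nabla u\cdot\nabla\zeta$ and ${\bf f}\cdot\nabla\zeta$ are integrated by parts (using $\phi=0$ on $\partial B^{+}_{2R}(z)$) and controlled by Poincar\'e, producing the factor $R/d\lesssim h\kappa$ that appears in \eqref{flat-comparison-estimate}.

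The hard part will be justifying the duality upgrade of Lemma \ref{Fromm-lemma} on the half ball and keeping every constant independent of $R$ and $z$. The convexity of $B^{+}_{2R}(z)$ was used crucially in Lemma \ref{Fromm-lemma}; the same convexity, together with the scaling $x\mapsto (x-z)/R$ which preserves half balls centered on $\partial\mathbb{R}^{n}_{+}$, must also be used to make the adjoint $L^{r'}$ bound scale-invariant. Secondary points to watch are: (i) the extension of $u$ and ${\bf f}$ by zero below $\{x_n=0\}$ really does produce a Sobolev function on the full ball so that the product-rule identity holds in the distributional sense up to the flat boundary; and (ii) the Hölder pairing used for $(\mathbb{A}-\langle\mathbb{A}\rangle)\nabla u\cdot\nabla\zeta$ requires $r\gamma\le p$ so that $\nabla u_{*}$ is in $L^{r\gamma}$ via Corollary \ref{Lebesgue-vs-weighted}, which is exactly the hypothesis of the lemma. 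Assembling these pieces yields \eqref{flat-comparison-estimate} with the claimed exponents and the prefactor $h\kappa$.
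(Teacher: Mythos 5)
Your proposal is correct and follows essentially the same route as the paper: the same decomposition of the equation for $w=v-u_{*}$ (the half-ball analogue of \eqref{eq5}), the same scale-invariant constant-coefficient estimate of Lemma \ref{Fromm-lemma} on $B^{+}_{2R}(z)$, and then the term-by-term estimates of $J_{1},J_{2},J_{3}$ exactly as in Lemma \ref{comparison-estimate}. Your duality upgrade of Lemma \ref{Fromm-lemma} to handle the $W^{-1,r'}$ data is precisely the step the paper leaves implicit in writing \eqref{bdry-Fromm}, and it is a sound justification (equivalently, one may represent each such functional as the divergence of an $L^{r}$ field and apply the lemma as stated).
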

 \begin{proof}
 Clearly,  $\langle \mathbb{A}\rangle_{B^{+}_{2R}(z)}$ is a symmetric constant matrix that is elliptic with same ellipticity constants $\lambda$ and $\Lambda$.  
The difference  $w = v- u_{*}$ solves the equation 
\[
\left\{
\begin{aligned}
\text{div} \langle \mathbb{A}\rangle_{B^{+}_{2R}(z)} \nabla w &= -\text{div} \langle \mathbb{A}\rangle_{B^{+}_{2R}(z)} \nabla u_{*} \quad \text{in $B^{+}_{2R}(z)$},\\
w&\in W_{0}^{1, r}(B^{+}_{2R}(z)). 
\end{aligned}\right. 
\]
Similarly to how \eqref{eq5} was obtained, using \eqref{mainhalfballequ} we see that $w\in W_{0}^{1, r}(B^{+}_{2R}(z))$ solves the boundary value problem  
 \[
\left\{\begin{aligned}
\text{div} \langle \mathbb{A}\rangle_{B^{+}_{2R}(z)} \nabla w &= \text{div} (\mathbb{A}-\langle \mathbb{A}\rangle_{B^{+}_{2R}(z)}) \nabla u_{*} -\text{div}[{\bf f} \zeta + \mathbb{A} \nabla\zeta u] - \langle \mathbb{A}\rangle_{B^{+}_{2R}(z)}\nabla u\cdot \nabla \zeta\\
& \qquad -(\mathbb{A}-\langle \mathbb{A}\rangle_{B^{+}_{2R}(z)}) \nabla u\cdot \nabla \zeta 
+ {\bf f}\cdot \nabla \zeta \quad \text{in $B^{+}_{2R}(z)$}, \\
w &= 0\quad \text{on $B_{2R}^{+}(z)$}. 
\end{aligned}\right. 
\]
Now we apply Lemma \ref{Fromm-lemma} to get the estimate that  
  \begin{equation}\label{bdry-Fromm}
 \begin{split}
 \|\nabla w\|_{L^{r}(B^{+}_{2R}(z))} &\leq C \|(\mathbb{A} - \langle \mathbb{A} \rangle_{B^{+}_{2R}(z)})\nabla u_{*}\|_{L^{r}(B^{+}_{2R}(z))}  \\
& \quad +C \left[   \left\|\left(|{\bf f }| + \left|\frac{u}{d}\right|\right) \chi \right\|_{L^{r}(B^{+}_{2R}(z))}
 + J_{1} +J_{2} + J_{3}\right],
 \end{split}
  \end{equation} 
 where as before we  set $\chi := \chi_{B_{2d}(x_{0})}$. The terms $J_{1}, J_{2}$ and $J_{3}$ are given by 
 \[
 \begin{split}
 &J_{1}=  \| (\mathbb{A} -  \langle \mathbb{A} \rangle_{B^{+}_{2R}(z)})\nabla u\cdot \nabla \zeta\|_{W^{-1,r'}(B^{+}_{2R}(z))}, \\
& J_{2}:=  \|\langle \mathbb{A} \rangle_{B^{+}_{2R}(z)}\nabla u\cdot \nabla \zeta\|_{W^{-1,r'}(B^{+}_{2R}(z))},\\
&J_{3} :=  \|{\bf f} \cdot \nabla  \zeta\|_{W^{-1,r'}(B^{+}_{2R}(z))}. 
\end{split}
\]
We may now follow the exact procedure as in the proof of Lemma \ref{comparison-estimate} to estimate each term on the right hand side of \eqref{bdry-Fromm} and complete the proof of the lemma.
\end{proof}

The following corollary is an important consequence of the last two lemmas. 
\begin{corollary} \label{flatmeanOSC}
Let  $1< r < p$. Then there exist positive constants 
$C$, $C_{0}$, $\vartheta$  and $\alpha \in (0, 1)$ such that 
for any $x\in B_{2\kappa d}^{+}(x_{0}) $, $0<\rho<2\kappa d$, and $R=h\rho$ with $h\geq 4$,   we have  
\begin{equation}\label{flat-mean-oscillation}
\begin{split}
\fint_{B_{\rho}(x)} ||\nabla u_{*}|  - \langle |\nabla u_{*}|\rangle |dy& \leq C \|\mathbb{A}\|_{*, 4h\kappa d}^{1/r -1/p}\left(\fint_{B_{3R}(x)} |\nabla u_{*}|^{p}\right)^{1/p}\\
&\quad + C \|\mathbb{A}\|_{*, 4h\kappa d} ^{\frac{\vartheta}{r}} \left( \fint_{B_{3R}(x)} |\nabla u|^{r} \chi_{B_{2d}(x_{0}) }\right)^{1/r}\\
& \quad  + C \left( \fint_{B_{3R}(x) } G(y) \right)^{1/r}  + C_{0} h^{-\alpha} \left(\fint_{B_{3R}(x)} |\nabla u_{*}|^{r}\right)^{1/r}.
\end{split}
\end{equation}
 The constant $C =C(h)$ may depend on $h$ but $C_{0}$ is independent of $h$ and the constant $\vartheta = \vartheta (r, n)$. Here  $G$ is as defined in \eqref{defn-G} and  $\|\mathbb{A}\|_{*, 4h\kappa d}$ is defined over the set $\overline{B^{+}_{2\kappa d}(x_0)}$:
$$\|\mathbb{A}\|_{*, 4h\kappa d}= \sup_{y\in  \overline{B^{+}_{2\kappa d}(x_0)}} \ \sup_{0 < \rho < 4h\kappa d} 
\fint_{B_{\rho}(y)\cap \RR^n_{+}} |\mathbb{A}(x) - \langle \mathbb{A}  \rangle_{B_{\rho}(y)\cap \RR^n_{+}}|dx.
$$ 
\end{corollary}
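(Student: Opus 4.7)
The plan is to adapt the interior mean-oscillation argument of Corollary \ref{mean-oscillation-interior}, splitting according to whether $x$ lies well inside $\mathbb{R}^n_+$ or close to $\partial\mathbb{R}^n_+$, relative to the scale $R = h\rho$. In the interior case $x_n \geq R$, the ball $B_R(x) \subset \mathbb{R}^n_+$, so one can compare $u_*$ with the solution of $\text{div}\,\langle\mathbb{A}\rangle_{B_R(x)} \nabla v = 0$ in $B_R(x)$ with $v - u_* \in W^{1,r}_0(B_R(x))$, exactly as in the proof of Corollary \ref{mean-oscillation-interior}. The only BMO quantities that enter are averaged oscillations of $\mathbb{A}$ on balls $B_R(x)$ with $x \in \overline{B^+_{2\kappa d}(x_0)}$ and $R < 2h\kappa d \leq 4h\kappa d$, so they are controlled by $\|\mathbb{A}\|_{*, 4h\kappa d}$ in the sense of the corollary, and since $B_R(x) \subset B_{3R}(x)$ the resulting estimate is stronger than \eqref{flat-mean-oscillation}.

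In the boundary case $x_n < R$, set $z := (x', 0) \in \partial\mathbb{R}^n_+$. Since $|z - x_0| \leq |x - x_0| < 2\kappa d$, we have $z \in \partial\mathbb{R}^n_+ \cap B_{2\kappa d}(x_0)$, and $|z - x| = x_n < R$ yields the nested inclusions $B_R(x) \subset B_{2R}(z) \subset B_{3R}(x)$ required by Lemma \ref{flat-c1alpha}. Let $v \in W^{1,r}(B^+_{2R}(z))$ solve \eqref{vhalfball}; extend $v$ by zero below $\partial\mathbb{R}^n_+$, which is a valid $W^{1,r}$-extension because the boundary condition of \eqref{vhalfball} together with $u_* = u\zeta = 0$ on $B_\mathfrak{K}(x_0)\cap\partial\mathbb{R}^n_+$ forces $v = u_* = 0$ on $B_{2R}(z) \cap \partial\mathbb{R}^n_+$. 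Following Corollary \ref{mean-oscillation-interior}, I would decompose
\begin{equation*}
\fint_{B_\rho(x)} \bigl||\nabla u_*| - \langle|\nabla u_*|\rangle_{B_\rho(x)}\bigr|\, dy \leq 2\left(\fint_{B_\rho(x)} |\nabla u_* - \nabla v|^r dy\right)^{1/r} + 2\fint_{B_\rho(x)} |\nabla v - \langle\nabla v\rangle_{B_\rho(x)}|\, dy.
\end{equation*}
For the first summand, the inclusion $B_\rho(x) \cap \mathbb{R}^n_+ \subset B^+_{2R}(z)$ (with $\nabla u_*$ and $\nabla v$ both vanishing outside $\mathbb{R}^n_+$) lets me enlarge the domain of integration to $B^+_{2R}(z)$ at the cost of a dimensional power of $h$; then Lemma \ref{flat-comparison-estimate-lemma} with $\gamma := p/r$ (so that $1/(r\gamma') = 1/r - 1/p$) bounds $\fint_{B^+_{2R}(z)}|\nabla u_* - \nabla v|^r$ by the desired three terms; a final enlargement from $B^+_{2R}(z)$ to $B_{3R}(x)$ introduces only a dimensional constant. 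All $(h\kappa)^r$ and volume-ratio factors are absorbed into the $h$-dependent constant $C(h)$. For the second summand, since $h \geq 4$ ensures $\rho = R/h \leq R/4$, Lemma \ref{flat-c1alpha} delivers
\begin{equation*}
\fint_{B_\rho(x)} |\nabla v - \langle\nabla v\rangle_{B_\rho(x)}|\, dy \leq C(\rho/R)^\alpha \left(\fint_{B_{3R}(x)} |\nabla u_*|^r dy\right)^{1/r} = C_0 h^{-\alpha}\left(\fint_{B_{3R}(x)} |\nabla u_*|^r dy\right)^{1/r}
\end{equation*}
with $C_0$ independent of $h$. Combining the two summands and bounding $\|\mathbb{A}\|_{*_\partial, 4h\kappa d} \leq \|\mathbb{A}\|_{*, 4h\kappa d}$ produces \eqref{flat-mean-oscillation}.

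The main delicacy is not in the comparison itself but in the bookkeeping of $h$-dependence. The constant $C_0$ in front of $h^{-\alpha}$ must be \emph{independent} of $h$, since in the subsequent iteration this is the term to be absorbed into the left-hand side of the boundary analogue of Theorem \ref{main-interior-local-estimate}; this is guaranteed by construction in Lemma \ref{flat-c1alpha}. All other $h$-dependent prefactors multiply BMO quantities of $\mathbb{A}$ that will be made small by an appropriate choice of $\delta$, and so are harmless.
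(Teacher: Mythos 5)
Your proposal is correct and follows essentially the same route as the paper: the same interior/boundary dichotomy (your condition $x_n<R$ is equivalent to the paper's $B_R(x)\nsubseteq B^{+}_{\mathfrak{K}}(x_0)$), the same choice of the nearest boundary point $z$ giving $B_R(x)\subset B_{2R}(z)\subset B_{3R}(x)$, the same use of Lemma \ref{flat-comparison-estimate-lemma} with $\gamma=p/r$ after zero extension of $v$, the triangle-inequality splitting, and Lemma \ref{flat-c1alpha} producing the $C_0h^{-\alpha}$ term with $C_0$ independent of $h$. No substantive differences or gaps to report.
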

\begin{proof}

We will consider the following two cases. 
\subsection*{Case 1}$B_{R}(x) \subset B^{+}_{\mathfrak{K}}(x_{0})$: In this case, we can proceed as in the interior case, Corollary \ref{mean-oscillation-interior}, to prove \eqref{flat-mean-oscillation}  even with $B_{3R}(x)$ replaced by $B_{R}(x)$ and $\|\mathbb{A}\|_{*, 4h\kappa d}$ replaced by $\|\mathbb{A}\|_{*, 2h\kappa d}$. 
\subsection*{Case 2} $B_{R}(x) \nsubseteq B^{+}_{\mathfrak{K}}(x_{0})$: Then $B_{R}(x)\cap \partial \mathbb{R}^{n}_{+} \neq \emptyset$. Let $z = z(x)$ be the point on  $ \partial \mathbb{R}^{n}_{+}\cap  B_{2\kappa d}(x_0)$  so that 
\[
|z - x| = \text{dist}(x, \partial \mathbb{R}_{+}^{n}).
\]
It is easy to see that  $|z -x| < R$ and therefore  
\begin{equation}\label{ztox}
B_{R}(x) \subset B_{2R}(z) \subset B_{3R}(x). 
\end{equation}

Let $\gamma=p/r>1$ and $v\in W^{1, r}(B^{+}_{2R}(z))$ be the unique solution of \eqref{vhalfball}. 
Applying Lemma \ref{flat-comparison-estimate-lemma}, we obtain constants $C$ and $\vartheta$ so that 
\eqref{flat-comparison-estimate} holds. 
Observe that since $B_{2R}(z) \subset B_{5 h \kappa  d}(x_{0}) \subset B_{\mathfrak{K} }(x_{0})$  and $u_{*}$ is zero on $B^{-}_{\mathfrak{K} }(x_{0})$ we may extend $v$ to be zero in  $B^{-}_{2R}(z)$ without affecting the inequality \eqref{flat-comparison-estimate}. We can then replace $B^{+}_{2R}(z)$ with $B_{2R}(z)$ in \eqref{flat-comparison-estimate}.  Moreover,
we can also replace $\|\mathbb{A}\|_{*_{\partial}, 4h\kappa d}$ with $\|\mathbb{A}\|_{*, 4h\kappa d}$ in \eqref{flat-comparison-estimate} as the latter is larger.

To obtain the estimate over $B_{R}(x)$, we use  the relation \eqref{ztox}  and write 
\begin{equation}\label{flat-comparison-estimate2}
\begin{split}
\left(\fint_{B_{R}(x)} | \nabla v - \nabla u_{*}|^{r}dy \right)^{1/r}& \leq C  \|\mathbb{A}\|_{*, 4h\kappa d}^{1/r\gamma'}\left(\fint_{B_{3R}(x)} |\nabla u_{*}|^{r\gamma}\right)^{1/(r\gamma)}\\
& \quad+ C\, (h\kappa) \|\mathbb{A}\|_{*, 4h\kappa d} ^{\frac{\vartheta}{r}} \left( \fint_{B_{3R}(x)} |\nabla u|^{r} \chi_{B_{2d}(x_{0}) }\right)^{1/r}\\
&\quad + C\,(h\kappa) \left( \fint_{B_{3R}(x)} G(y) \right)^{1/r} . 
\end{split}
\end{equation} 

On the other hand, with $x\in B^{+}_{2\kappa d} (x_{0})$ and $\rho\in (0, 2\kappa d)$, using the triangle and H\"older's inequality as in \eqref{comparison-setup}, we have  
\[
\begin{split}
\fint_{B_{\rho} (x)}& \left||\nabla u_{*}| - \langle |\nabla u_{*}|\rangle_{B_{\rho}(x)}\right|dy\\
&\leq 2 \left(\fint_{B_{\rho} (x)}|\nabla u_{*} - \nabla v|^{r}dy \right)^{1/r}+   2\fint_{B_{\rho} (x)} |\nabla v - \langle \nabla v\rangle_{B_{\rho}(x)}|dy.
\end{split}
\]

Finally, using the relation $R = h \rho$, $h\geq 4$, 
\eqref{flat-comparison-estimate2}, Lemma \ref{flat-c1alpha}, and  the above  estimate we get \eqref{flat-mean-oscillation} 
as desired.
\end{proof}

\begin{proof}[Proof of Theorem \ref{flat-mainestimate}]
The proof is similar to that of Theorem \ref{main-interior-local-estimate}. In fact, following the exact procedure and, using the mean oscillation estimate in Lemma \ref{flatmeanOSC}, we can show that there exists a constant $\delta_{1}>0$ such that  for any $\delta_{0} \leq \delta_{1}$ and $ \|\mathbb{A}\|_{*, \mathfrak{K}/2}\leq \delta_{0}$, we have
\begin{equation}\label{eq616}
\begin{split}
\int_{B_{d}(x_{0})} |\nabla u|^{q} w dx& \leq 
C\ \bigg[\delta_{0}^{q\frac{\vartheta}{r}} \int_{B_{2d}(x_{0})} |\nabla u|^{q} w dx  +  \int_{B_{2d}(x_{0})}\left(|{\bf f}|^{q}  + \left|\frac{u}{d}\right|^{q}\right) w dx\bigg],
 \end{split}
\end{equation}
 that holds for all  $d\leq \mathfrak{K}/M$ for some $M>2$. 
We should mention that from the interior estimate, by choosing $M$ large and  $\delta_{1}$ small  we have 
\begin{equation}\label{Int-in-boundary}
\begin{split}
\int_{B_{\rho}} |\nabla u|^{q} w dx& \leq 
C\ \int_{B_{2\rho}}\left(|{\bf f}|^{q}  + \left|\frac{u}{d}\right|^{q}\right) w dx,
 \end{split}
\end{equation}
that holds for all $B_{\rho}$ provided that $\rho\leq \mathfrak{K}/{M}$,  $B_{ M\rho} \subset B^{+}_{\mathfrak{K}}(x_0)$ and $ \|\mathbb{A}\|_{*, \mathfrak{K}/2}\leq \delta_{0}$. 
Next  using \eqref{eq616} and \eqref{Int-in-boundary}, we can absorb the first term on the right hand side of  \eqref{eq616} by a covering/iteration as before.  Indeed, let $d < l_{1} < l_{2} < 2d$, and  cover $B^{+}_{l_{1}}(x_{0})$ by the collection of balls that are either fully contained in $\mathbb{R}^{n}_{+}$ or whose center is the hyperplane $x_{n}=0$. To do so, we divide  $B^{+}_{l_{1}}(x_{0})$ in two regions. The first region is a layer of thickness $\frac{l_{2}-l_{1}}{4}$ near the hyperplane, and this region will be covered by balls centered at the hyperplane. We define this set explicitly as 
\[
\mathcal{L}(x_{0}) = B^{+}_{l_{1}}(x_{0})\cap\left\{(x', x_{n})\in \mathbb{R}^{n}_{+}: 0< x_{n}< \frac{l_{2}-l_{1}}{4}\right \}. 
\]
Now choose a collection of balls $\{B_{i} =  B_{(l_{2}-l_{1})/2}(z_{i})\}$, $z_{i}\in \partial \mathbb{R}^{n}_{+}\cap B_{l_{1}}(x_{0}) $ that cover $\mathcal{L}(x_{0})$ in such a way that each point of $\mathbb{R}^{n}$ belongs to at most $N = N(n)$ balls of the collection $\{2B_{i}\}$. 
As $z_{i}\in  \partial \mathbb{R}^{n}_{+}\cap B_{l_{1}}(x_{0})$ we have $2B_{i} = B_{l_{2}-l_{1}}(z_{i})\subset B_{l_{2}}(x_{0})$. Then using \eqref{eq616}, for each $i$ we have that 
\[
\begin{split}
\int_{B_{i}} |\nabla u|^{q}w dx &\leq C  \delta_{0}^{q\frac{\vartheta}{r}} \int_{2B_{i}} |\nabla u|^{q}w dx+ C \int_{2B_{i}}\left(|{\bf f}|^{q}  + \left|\frac{u}{l_{2}-l_{1}}\right|^{q}\right) w dy. 
 \end{split}
 \]
 Summing over $i$, we obtain 
 \[
 \int_{\mathcal{L}(x_{0})}|\nabla u|^{q} w dx \leq N (n) C \left[ \delta^{q\frac{\vartheta}{r}}_{0}  \int_{B_{l_{2}}(x_{0})} |\nabla u|^{q}w dx  +  \int_{B_{l_{2}}(x_{0})}\left(|{\bf f}|^{q}  + \left|\frac{u}{l_{2}-l_{1}}\right|^{q}\right) w dy\right]. 
 \]
 Let now $\delta_{2} >0$ be such that 
 \[
 N (n) C \delta_{2}^{q\frac{\vartheta}{r}} \leq \frac{1}{2}
 \]
 and choose $\delta_{0} \leq \min\{\delta_{1}, \delta_{2}\}$. Then when $\|\mathbb{A}\|_{* \frac{\mathfrak{K}}{2}}\leq \delta_{0}$,  we have 
 \begin{equation} \label{boundary-layer}
  \int_{\mathcal{L}(x_{0})}|\nabla u|^{q} w dx \leq \frac{1}{2}  \int_{B_{l_{2}}(x_{0})} |\nabla u|^{q}w dx +  N (n) \ C \, \int_{B_{2d}(x_{0})}\left(|{\bf f}|^{q}  + \left|\frac{u}{l_{2}-l_{1}}\right|^{q}\right) w dy,
 \end{equation}
that holds for all $d < l_{1} < l_{2} < 2d$.  Next,  we cover the remaining part $B^{+}_{l_{1}}(x_{0})\setminus \mathcal{L}(x_{0})$ by balls that are completely contained in $\mathbb{R}^{n}_{+}$. 
To do that, we choose balls $B_{i} :=B_{\frac{l_{2} - l_{1}}{10 M }}(z_{i})$, $z_{i} \in B^{+}_{l_{1}}(x_{0})\setminus \mathcal{L}(x_{0}) $ that cover $B^{+}_{l_{1}}(x_{0})\setminus \mathcal{L}(x_{0})$  in such a way that each point of $\mathbb{R}^{n}$ belongs to at most $N = N(n)$ balls of the collection $\{2B_{i}\}$. By construction, $M B_{i} = B_{\frac{l_{2} - l_{1}}{10 }}(z_{i})\subset B^{+}_{\mathfrak{K}}(x_0)$. We now apply inequality \eqref{Int-in-boundary} for each $B_{i}$ to obtain
\[
\int_{B_{i}} |\nabla u|^{q} w dx \leq  C \int_{2B_{i}}\left(|{\bf f}|^{q}  + \left|\frac{u}{l_{2}-l_{1}}\right|^{q}\right) w dx, 
\]
provided $ \|\mathbb{A}\|_{*, \mathfrak{K}/2}\leq \delta_{0}$.
Summing over $i$ we obtain that 
\begin{equation}\label{interior-layer}
\int_{B^{+}_{l_{1}}(x_{0})\setminus \mathcal{L}(x_{0})} |\nabla u|^{q} w dx \leq  C \,\int_{B_{2d}(x_{0})}\left(|{\bf f}|^{q}  + \left|\frac{u}{d}\right|^{q}\right) w dx
\end{equation}
We now add inequalities \eqref{boundary-layer} and \eqref{interior-layer} to obtain the following estimate (recall that $\nabla u=0$ in $B^{-}_{l_{1}}(x_{0})$):
 \[
  \int_{B_{l_{1}}(x_{0})}|\nabla u|^{q} w dx \leq \frac{1}{2}  \int_{B_{l_{2}}(x_{0})} |\nabla u|^{q}w dx +  C  \, \int_{B_{2d}(x_{0})}\left(|{\bf f}|^{q}  + \left|\frac{u}{l_{2}-l_{1}}\right|^{q}\right) w dy
 \]
 that holds for all $d < l_{1} < l_{2} \leq2d$ provided $ \|\mathbb{A}\|_{*, \mathfrak{K}/2}\leq \delta_{0}$.
Thus again applying   Lemma \ref{iteration} we obtain that  
 \[
   \int_{B_{d}(x_{0})}|\nabla u|^{q} w dx \leq C \int_{B_{2d}(x_{0})}\left(|{\bf f}|^{q}  + \left|\frac{u}{d}\right|^{q}\right) w dy,
 \]
 which proves the theorem. 
\end{proof}
\subsection{Local  up to the boundary estimates for Lipschitz domains}

\begin{theorem}\label{Lip-boundary-main}
Suppose that $\Omega\subset \mathbb{R}^{n}$ is a bounded  domain with Lipschitz boundary. Let $M_{0} > 0$, $\mathfrak{K} >0$, $1 <q<\infty$ and $w\in A_{q}$ such that $[w]_{A_{q}} \leq M_{0}$. Fix $x_{0}\in \partial\Omega$. Then there exist   $\delta_{0} >0$, $M >8$, and $C>0$ such that  for ${\bf f}\in L^{q}_{w}(\Omega\cap B_{\mathfrak{K}}(x_{0}) )$ and  any weak solution $u\in W^{1, q}_{w}(\Omega\cap B_{\mathfrak{K}}(x_{0}))$ to the problem 
\begin{equation}\label{basic-PDE-bdry}
\left\{
\begin{aligned}
\text{div}\, \mathbb{A}(x)\nabla u &= \text{div}\, {\bf f}(x)\quad \text{in\, $\Omega$}, \\
 u& =0\quad \text{on  $\partial \Omega\cap B_{\mathfrak{K}}(x_{0})$}, 
 \end{aligned}
\right.
\end{equation}
one has the estimate 
\begin{equation*}\label{estimate-boundary}
\int_{B_{d/2}(x_{0})\cap \Omega} |\nabla u|^{q} w dx \leq C \int_{B_{4 d}(x_{0})\cap \Omega}\left(|{\bf f}|^{q} + \left|\frac{u}{d}\right|^{q}\right)w dx 
\end{equation*}
for all $d \in (0, \mathfrak{K}/M]$,
provided $\mathbb{A}$ is $(\delta, \mathfrak{K})$-BMO and $\Omega$ is  $(\delta, \mathfrak{K})$-Lip with $\delta \leq \delta_{0}$. 
The constants $\delta_0, M,$ and $C$ depend only on $n, q, \lambda, \Lambda,$ and $M_{0}$. 
\end{theorem}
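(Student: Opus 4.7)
The plan is to reduce Theorem \ref{Lip-boundary-main} to the flat-boundary estimate Theorem \ref{flat-mainestimate} by locally straightening $\partial\Omega$ via a bi-Lipschitz change of variables that is $O(\delta)$-close to the identity, and then transferring both the equation and the weight through this map.

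First I would invoke the $(\delta,\mathfrak{K})$-Lip assumption at $x_0$ to obtain, in suitably rotated coordinates, a Lipschitz $\Gamma:\mathbb{R}^{n-1}\to\mathbb{R}$ with $\|\nabla\Gamma\|_{L^\infty}<\delta$ and $\Omega\cap B_{\mathfrak{K}}(x_0)=\{x_n>\Gamma(x')\}\cap B_{\mathfrak{K}}(x_0)$, and set
\[
T(x)=(x'-x_0',\,x_n-\Gamma(x')), \qquad T(x_0)=0.
\]
Then $T$ is bi-Lipschitz with Lipschitz constants $L,L^{-1}\to 1$ as $\delta\to 0$, $|\det DT|=1$, and there exist $c_0(\delta),C_0(\delta)$ with $c_0,C_0\to 1$ such that $B^{+}_{c_0 r}(0)\subset T(\Omega\cap B_r(x_0))\subset B^{+}_{C_0 r}(0)$ for every $r\le\mathfrak{K}$.

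Next I would push the problem forward by $T$. Define $\tilde u=u\circ T^{-1}$, $\tilde{\mathbb{A}}(y)=DT(x)\,\mathbb{A}(x)\,DT(x)^T$ with $x=T^{-1}(y)$, $\tilde{\mathbf{f}}(y)=DT(x)\,\mathbf{f}(x)$, and $\tilde w=w\circ T^{-1}$. A direct computation shows $\tilde u$ weakly solves $\text{div}_y(\tilde{\mathbb{A}}\nabla\tilde u)=\text{div}_y\tilde{\mathbf{f}}$ in $T(\Omega\cap B_{\mathfrak{K}}(x_0))$ with $\tilde u=0$ on $\partial\mathbb{R}^n_{+}\cap B^{+}_{c_0\mathfrak{K}}(0)$. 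I then need to verify three conditions for Theorem \ref{flat-mainestimate} to apply: (i) $\tilde{\mathbb{A}}$ is symmetric and uniformly elliptic with constants $\tilde\lambda,\tilde\Lambda$ depending only on $\lambda,\Lambda,n$ (immediate for $\delta$ small since $DT=I+O(\delta)$); (ii) $\tilde{\mathbb{A}}$ is $(C\delta,\mathfrak{K}')$-BMO on $\overline{B^{+}_{\mathfrak{K}'}(0)}$ for some $\mathfrak{K}'\sim\mathfrak{K}$, which I would see by splitting $\tilde{\mathbb{A}}=\mathbb{A}\circ T^{-1}+O(\delta)$ pointwise, using the Jacobian-$1$ pullback to bound the oscillation of $\mathbb{A}\circ T^{-1}$ by that of $\mathbb{A}$ up to a harmless constant; (iii) $\tilde w\in A_q$ with $[\tilde w]_{A_q}\le C_n[w]_{A_q}$, which follows because $T$ is bi-Lipschitz with $|\det DT|=1$, so pullbacks of balls are sandwiched between balls of comparable measure.

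Having verified the hypotheses (with $\delta_0$ chosen small enough that $\tilde\delta:=C\delta_0$ lies below the threshold given by Theorem \ref{flat-mainestimate} applied with weight constant $C_n M_0$), I would pick $\rho$ and $M$ so that
\[
T(B_{d/2}(x_0)\cap\Omega)\subset B^{+}_{\rho}(0),\qquad B^{+}_{2\rho}(0)\subset T(B_{4d}(x_0)\cap\Omega),
\]
which is possible because the ratio $4d/(d/2)=8$ comfortably exceeds $2(C_0/c_0)^2$ once $\delta_0$ is small; then $M$ must be chosen larger than $C_0$ times the $M$-constant of Theorem \ref{flat-mainestimate} so that $\rho\le \mathfrak{K}/(\text{that }M)$. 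Applying Theorem \ref{flat-mainestimate} to $\tilde u$ on $B^{+}_{\rho}(0)$ and pulling back via the Jacobian-$1$ change of variables, together with the pointwise equivalences $|\nabla u|\asymp|\nabla\tilde u|$ and $|\mathbf{f}|\asymp|\tilde{\mathbf{f}}|$, yields the stated estimate.

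The main obstacle will be the careful verification that $\tilde{\mathbb{A}}$ remains $(\text{small},\cdot)$-BMO under the flattening, since $\nabla\Gamma$ is only assumed in $L^\infty$ and not a priori in any BMO class; the key observation is that the entries of $DT$ differ from those of $I$ by a quantity bounded in $L^\infty$ by $\delta$, so that $\|\tilde{\mathbb{A}}-\mathbb{A}\circ T^{-1}\|_{L^\infty}\le C\delta$ and the oscillation calculation reduces to that of $\mathbb{A}$ pulled back by a map with unit Jacobian. All constants track transparently through the argument and depend only on $n,q,\lambda,\Lambda,M_0$.
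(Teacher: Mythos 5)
Your proposal is correct and follows essentially the same route as the paper: flatten the boundary with the unit-Jacobian map $x\mapsto (x',x_n-\Gamma(x'))$, transform the equation, verify ellipticity, the $A_q$ property of the pushed-forward weight, and the smallness of the BMO seminorm of the transformed coefficients (using that $D T=\mathbb{I}+O(\delta)$ in $L^\infty$, which is exactly the paper's Observation 3), then invoke Theorem \ref{flat-mainestimate} and change variables back with the inclusions that account for the $d/2$ and $4d$ radii. The only differences are cosmetic (translating $x_0$ to the origin and the transpose convention in the transformed data), so no gap to report.
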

\begin{proof}
We are going to use standard flattening of the boundary procedure to  prove the theorem. First, we flatten the boundary and transform the equation to be set on a half ball. Along the way, we will discover that the small Lipschitz constant of the boundary will allow the transformed coefficients to have small BMO seminorm.  We then apply estimates on half balls that are developed in the previous subsection. 

\subsection*{{\bf Flattening the boundary} }
First, since $\Omega$ is a $(\delta, \mathfrak{K})$-Lip domain,  for each $x_{0}\in \partial \Omega,$ there correspond a coordinate system with  $x = (x', x_{n})$ where $x' \in\mathbb{R}^{n-1}$ and $x_{n}\in \mathbb{R}$ and a Lipschitz continuous function $\Gamma:\mathbb{R}^{n-1} \to \mathbb{R}$ such that 
\[
\begin{split}
\Omega \cap B_{\mathfrak{K}}(x_{0}) &= \{(x', x_{n}): \Gamma(x') < x_{n}\}\cap B_{\mathfrak{K}}(x_{0}),\\
\partial \Omega\cap B_{\mathfrak{K}}(x_{0}) &= \{(x', x_{n}): \Gamma(x') = x_{n}\}\cap B_{\mathfrak{K}}(x_{0}).
\end{split}
\]
  Moreover,  $\|\nabla_{x'}\Gamma\|_{L^{\infty}} <\delta \leq 1$. 
Define  the flattening mapping $\Phi: \mathbb{R}^{n} \to \mathbb{R}^{n}$ as 
\[
y = \Phi(x) = \Phi(x' ,x_{n}) := (x', x_{n}-\Gamma(x')),  
\]
and  its inverse $\Psi: \mathbb{R}^{n} \to \mathbb{R}^{n}$ as 
\[x =\Psi(y)= \Phi^{-1}(y) := (y', y_{n} + \Gamma(y')). 
\]
It is then clear that the gradient matrices $\nabla \Phi$ and $\nabla \Psi$ are inverses of each other. Moreover, after defining the the vector $\vec{\mathfrak{l}}(x) := (\nabla_{x'}\Gamma(x'),0) $,  and $\vec{{\bf e}}_{n} := (0, 0, \dots, 1)$, a simple calculation shows that 
\[\nabla \Phi(x) = \mathbb{I} - \vec{\bf e}_{n}\otimes \vec{\mathfrak{l}}(x) \quad \text{for a.e. } x\in \mathbb{R}^{n}, 
\]
 and
\[ \nabla \Psi(y) = 
 \mathbb{I} +\vec{{\bf e}}_{n}\otimes \vec{\mathfrak{l}}(\Phi^{-1}(y)) \quad \text{for  a.e. }  y\in \mathbb{R}^{n}. 
 \] 
In the above,  $\mathbb{I}$ represents the identity matrix and $\otimes$ is the dyadic product.  In particular, 
 \begin{equation}\label{det}
 \det(\nabla \Psi) = \det \nabla \Phi  =1.
 \end{equation}
 
Next we observe that  
\begin{equation}\label{Bmathfrak}
B_{\mathfrak{K}/2}(\Phi(x_0))\subset \Phi(B_{\mathfrak{K}}(x_0))\subset B_{2\mathfrak{K}}(\Phi(x_0)).
\end{equation}
Indeed, for $y\in B_{\mathfrak{K}/2}(\Phi(x_0))$ we have $|y-\Phi(x_0)|<\mathfrak{K}/2$ and $y=\Phi(x)$ with $x=\Psi(y)$.
Thus 

\[
\begin{split}
|x-x_0|=|\Psi(y)- \Psi(\Phi(x_0))|& \leq |y-\Phi(x_0)|+ \| \nabla_{x'} \Gamma\|_{L^\infty}|y'-[\Phi(x_0)]'|\\
\leq 2|y-\Phi(x_0)| <\mathfrak{K}. 
\end{split}
\]
That is, $y\in B_{\mathfrak{K}}(\Phi(x_0))$, which yields the first inclusion. Arguing similarly, we obtain the second inclusion.

By  \eqref{Bmathfrak} we have   
\begin{equation}\label{inc}
B^{+}_{\mathfrak{K}/2}(\Phi(x_{0})) \subset \Phi(\Omega\cap B_{\mathfrak{K}}(x_{0}))\,\text{and}\, B_{\mathfrak{K}/2}^{-}(\Phi(x_{0}))\subset \Phi(\Omega^{c}\cap B_{\mathfrak{K}}(x_{0})).
\end{equation}
  Now define 
\[
u_{1} (y) = u(\Psi(y)) \quad \text{for } y\in B^{+}_{\mathfrak{K}/2}(\Phi(x_{0})). 
\]
\subsubsection*{\bf Observation 1} The function $u_{1}$ is a weak solution to the equation 
\begin{equation*} 
\left\{
\begin{aligned}
\text{div}\, \mathbb{A}_{1}(y) \nabla u_{1}(y) &= \text{div}\, {\bf f}_{1}(y)\quad \text{in $B^{+}_{\mathfrak{K}/2}(\Phi(x_{0}))$},\\
u_{1} &= 0\quad \text{on} \,\,B_{\mathfrak{K}/2}(\Phi(x_{0})) \cap \partial \mathbb{R}^{n}_{+},
\end{aligned}
\right.
\end{equation*}
where $$\mathbb{A}_{1}(y) = \nabla \Phi(\Psi(y))\mathbb{A}(\Psi(y)) [\nabla \Phi(\Psi(y)]^{T} \quad\text{and}\quad {\bf f}_{1}(y) = [\nabla \Phi(\Psi(y)]^{T} {\bf f}(\Psi(y)). $$  
Indeed, for any smooth function $\varphi\in C_{c}^{\infty}(B^{+}_{\mathfrak{K}/2}(\Phi(x_{0})))$, we have that, after change of variables $x = \Psi(y) \iff y = \Phi(x)$,
\[
\begin{split}
&\int_{B^{+}_{\mathfrak{K}/2}(\Phi(x_{0}))} \langle \mathbb{A}_{1}(y)\nabla u_{1}(y), \nabla \varphi(y) \rangle dy  \\
&= \int_{\Psi(B^{+}_{\mathfrak{K}/2}(\Phi(x_{0}))) }\langle \nabla \Phi(x)\mathbb{A}(x) [\nabla \Phi(x)]^{T}[\nabla \Psi( \Phi(x))]^{T} \nabla u (x),  [\nabla \Phi(x)]^{-T}\nabla (\varphi(\Phi(x))) \rangle dx\\
& = \int_{\Psi(B^{+}_{\mathfrak{K}/2}(\Phi(x_{0}))) }\langle \mathbb{A}(x) \nabla u (x),  \nabla (\varphi(\Phi(x))) \rangle dx \\
&= \int_{\Psi(B^{+}_{\mathfrak{K}/2}(\Phi(x_{0}))) }\langle {\bf f}(x),  \nabla (\varphi(\Phi(x))) \rangle dx = \int_{B^{+}_{\mathfrak{K}/2}(\Phi(x_{0}))} \langle {\bf f}_{1}(y), \nabla \varphi(y) \rangle dy.
\end{split}
\]
Here we  used \eqref{det}, \eqref{inc}, the fact that $ u$ is a weak solution of \eqref{basic-PDE-bdry}, and that the function
$\varphi(\Phi(\cdot))  \in C_{0}^{0, 1}(\Psi(B^{+}_{\mathfrak{K}/2}(\Phi(x_{0})) )$ is a valid test function for \eqref{basic-PDE-bdry}. 
\subsubsection*{\bf Observation 2} As in \eqref{Bmathfrak} we have $\Psi(B_r(y))\subset B_{2r}(\Psi(y))$ for all balls $B_r(y)\subset\RR^n$. Thus since  $\Psi$ and $\Phi$ are measure preserving maps, it can easily be shown that $w_{1}(y) = w(\Psi(y))$ is also an $A_{q}$ weight with $[w_1]_{A_q}\leq c\, [w]_{A_q} $.  Similarly,  $u_{1} \in W^{1, q}_{w_{1}}(B_{s}^{+}(\Phi(x_{0}))) $, ${\bf f}_{1}\in L^{q}_{w_{1}}(B^{+}_{s}(\Phi(x_{0})))$, and the coefficient matrix $\mathbb{A}_{1}$ is uniformly elliptic. To verify the later, let $y\in B^{+}_{s}(\Phi(x_{0})$, $\xi \in \mathbb{R}^{n}$, and $\eta = [\nabla \Phi(\Psi(y))]^{T}\xi$. Then it follows from the ellipticity of $\mathbb{A}$ that  
\[
\lambda |\eta|^{2} \leq \langle \mathbb{A}_{1}(y)\xi, \xi \rangle = \langle \mathbb{A}(\Psi (y))[\nabla \Phi(\Psi(y))]^{T}\xi, [\nabla \Phi(\Psi(y))]^{T}\xi \rangle  \leq \Lambda |\eta|^{2}. 
\]
To estimate $|\eta|$ in terms of $|\xi|$, we observe that 
\[
|\eta|^{2} = |\xi - \xi_{n}\mathfrak{l}(x)|^{2} \leq 2 |\xi|^{2} + 2|\xi_{n}|^{2}\|\nabla_{x'}\Gamma\|^{2}_{L^{\infty}} \leq 2(1 + \|\nabla_{x'}\Gamma\|^{2}_{L^{\infty}})|\xi|^{2} \leq 4|\xi|^{2}. 
\] 
Similar calculations yield that 
$
|\xi|^{2} \leq 2(1 + \|\nabla_{x'}\Gamma\|^{2}_{L^{\infty}}) |\eta|^{2} \leq 4|\eta|^{2}, 
$
from which we conclude that $\mathbb{A}_{1}$ is uniformly elliptic on $B^{+}_{s}(\Phi(x_{0})$  with 
 constants of ellipticity $\lambda/4$ and $4\Lambda$.

\subsubsection*{\bf Observation 3} We can control the BMO seminorm of  $\mathbb{A}_{1}$ in terms of the BMO seminorm of $\mathbb{A}$  and the Lipschitz constant of $\partial \Omega$. 
Writing  $\mathbb{A}_{1}$ in  the expanded form we have that for any $y\in B_{\mathfrak{K}/2}^{+}(\Phi(x_0))$,
\[
\begin{split}
\mathbb{A}_{1}(y) &= \mathbb{A}(\Psi (y)) - [\mathbb{A}(\Psi (y))\vec{\mathfrak{l}}(\Psi (y))\otimes \vec{\bf e}_{n}] - [\vec{\bf e}_{n}\otimes \mathbb{A}(\Psi (y))\vec{\mathfrak{l}}(\Psi (y))]\\
& \quad + \langle  \mathbb{A}(\Psi (y))\vec{\mathfrak{l}}(\Psi (y)), \vec{\mathfrak{l}}(\Psi (y))\rangle (\vec{\bf e}_{n}\otimes \vec{\bf e}_{n}). 
\end{split}
\]
It then follows from direct calculations that 
\[
\begin{split}
\| \mathbb{A}_{1}\|_{*, \frac{\mathfrak{K}}{4}} &:=\sup \fint_{B_{\rho}(y_{0})\cap \mathbb{R}^{n}_{+}}  |\mathbb{A}_{1}(y) - \langle\mathbb{A}_{1}\rangle _{B_{\rho}(y_{0}) \cap \mathbb{R}^{n}_{+}}| dy \\
 &\leq \sup
\fint_{B_{\rho}(y_{0})\cap \mathbb{R}^{n}_{+}}  \left|\mathbb{A}(\Psi(y)) - \langle\mathbb{A}(\Psi (\cdot))\rangle _{B_{\rho}(y_{0}) \cap \mathbb{R}^{n}_{+}} \right| dy  + C(\Lambda) \|\nabla_{x'} \Gamma\|_{L^{\infty}},
\end{split}
\]
where the suprema are taken over all $y_0\in \overline{B^{+}_{\mathfrak{K}/4}(\Phi(x_{0}))}$ and $\rho\in (0, \mathfrak{K}/4)$.

By adding and subtracting any constant matrix $\boldsymbol{\mu}_{\rho}$, which will be properly determined shortly,  and by making a change of variables we obtain 
\begin{equation}
\label{psibrhoy}
\begin{split}
\|\mathbb{A}_{1}\|_{*, \frac{\mathfrak{K}}{4}} & \leq 2 \sup
\fint_{\Psi(B_{\rho}(y_{0})\cap \RR^n_{+})\cap \Omega}  |\mathbb{A}(x) - \boldsymbol{\mu}_{\rho}| dx  + C(\Lambda) \|\nabla_{x'} \Gamma\|_{L^{\infty}},
\end{split}
\end{equation}
where again the supremum is taken over all $y_0\in \overline{B^{+}_{\mathfrak{K}/4}(\Phi(x_{0}))}$ and $\rho\in (0, \mathfrak{K}/4)$.

Now as in \eqref{Bmathfrak} we have $\Psi(B_\rho(y))\subset B_{2\rho}(\Psi(y))$, and thus 
$$\Psi(B_{\rho}(y_{0})\cap \RR^n_{+})\cap \Omega \subset B_{2\rho}(\Psi(y_{0}))\cap \Omega.$$
 Also,  similar calculations show that 
\[
y_{0}\in \overline{B^{+}_{\mathfrak{K}/4}(\Phi(x_{0}))} \Longrightarrow \Psi(y_{0}) \in \overline{B_{\mathfrak{K}/2}(x_{0})\cap \Omega}. 
\]
Therefore,  after plugging $\boldsymbol{\mu}_{\rho} = \langle\mathbb{A}\rangle_{B_{2\rho}(\Psi(y_{0}))}$ into \eqref{psibrhoy}
and setting $z_0=\Psi(y_{0})$  we have that 
\[
\begin{split}
\|\mathbb{A}_{1}\|_{*, \frac{\mathfrak{K}}{4}} & \leq 2^{n+1} \sup 
\fint_{B_{2\rho}(z_0))\cap \Omega}  |\mathbb{A}(x) -  \langle\mathbb{A}\rangle_{B_{2\rho}(z_0)\cap \Omega }| dx   + C(\Lambda) \|\nabla_{x'} \Gamma\|_{L^{\infty}},
\end{split}
\]
where now the supremum is taken over all $z_0\in \overline{B_{\mathfrak{K}/2}(x_{0}) \cap \Omega}$ and $\rho\in(0,\mathfrak{K}/4)$.
This yields  
\begin{equation}\label{A1bmovsA}
\|\mathbb{A}_{1}\|_{*, \frac{\mathfrak{K}}{4}} \leq 2^{n+1}\| A\|_{*, \mathfrak{K}} + C(\Lambda) 
\|\nabla _{x'}\Gamma\|_{L^{\infty}}. 
\end{equation}
\subsection*{{\bf Local estimates at the Lipschitz  boundary} }  
We now apply Theorem \ref{flat-mainestimate} to the  problem 
\begin{equation*} 
\left\{
\begin{aligned}
\text{div}\, \mathbb{A}_{1}(y) \nabla u_{1}(y) &= \text{div}\, {\bf f}_{1}(y)\quad \text{in $B^{+}_{\mathfrak{K}/4}(\Phi(x_{0}))$},\\
u_{1} &= 0,\quad \text{on} \,\,B_{\mathfrak{K}/4}(\Phi(x_{0})) \cap \partial \mathbb{R}^{n}_{+}
\end{aligned}
\right.
\end{equation*}
to conclude that there exists a constant $\delta_{0}>0$ such that whenever $\mathbb{A}$ is $(\delta, \mathfrak{K})$-BMO and $\Omega$ is   $(\delta, \mathfrak{K})$-Lip with $\delta \leq \delta_{0}$, there exist a constant $M>2$ and $C>0$  such that 
\[
\int_{B^{+}_{d}(\Phi(x_{0})) } |\nabla u_{1}|^{q} w_{1} dy \leq  C \int_{B^{+}_{2d}(\Phi(x_{0}))} \left( |{\bf f}_{1}|^{q} + \left| \frac{u_{1}}{d}\right|^{q} \right)w_{1} dy
\]
for all $d \leq \mathfrak{K}/(4M)$.  Note that the smallness of BMO seminorm of $\mathbb{A}$  and that of the Lipschitz constant of $\partial \Omega$ imply the smallness of the BMO seminorm of $\mathbb{A}_{1}$ which follows from 
the bound \eqref{A1bmovsA} in {\bf Observation 3}.

Finally, after making the change of variables $x = \Psi(y)$, and noting that $\Psi(B^{+}_{d}(\Phi(x_{0}))\supset B_{d/2}(x_{0})\cap \Omega$, and $\Psi(B^{+}_{2d}(\Phi(x_{0})))\subset B_{4d}(x_{0})\cap \Omega$  we finally obtain 
\[
\int_{B_{d/2}(x_{0})\cap\Omega } |\nabla u|^{q} w dx \leq  C \int_{B_{4d}(x_{0})\cap \Omega } \left( |{\bf f}|^{q} + \left| \frac{u}{d}\right|^{q} \right)w dx
\]
for all $d \leq \mathfrak{K}/4M$. 
 \end{proof}
 \section{Global gradient estimates for Lipschitz domains }\label{global}
 In this section we prove the main results of the paper. 
\subsection{Proof of Theorem \ref{maintheorem-Cacciaopoli}} Let $M_1>8$ be the largest $M$ that are obtained in Theorem \ref{main-interior-local-estimate} and Theorem \ref{Lip-boundary-main}.
Let $\Omega_{\mathfrak{K}/(20M_1)}=\{x\in\Omega: {\rm dist}(x, \partial\Omega)> \mathfrak{K}/(20M_1)\}$ and use Vitali 
covering lemma to cover it by $N_1$ balls   $B_i$ of radius  $\mathfrak{K}/(60M_1^{2})$ so that the collection $\{(1/3)B_i\}$ are disjoint. Note that we have $M_1B_i \subset\Omega$, and   $N_1=N_1(n, {\rm diam}(\Omega)/\mathfrak{K})$ by a volume comparison. Thus by applying Theorem \ref{main-interior-local-estimate} to each $B_i$ with $d=\mathfrak{K}/(60M_1^{2})$ and summing over $i$ we get
$$\int_{\Omega_{\mathfrak{K}/(20M_1)}} |\nabla u|^q w dx \leq N_1 C \int_{\Omega} (|{\bf f}|^q + |u/\mathfrak{K}|^q) w dx.$$

Similarly, we can also cover $\Omega\setminus \Omega_{\mathfrak{K}/(20M_1)}$ by $N_2=N_2(n, {\rm diam}(\Omega)/\mathfrak{K})$
balls  $B_j=B_{\mathfrak{K}/(2M_1)}(\xi_j)$ with centers $\xi_j\in \partial\Omega$. Then 
applying Theorem \ref{Lip-boundary-main} to each boundary ball $B_{\mathfrak{K}}(\xi_{i})\cap \Omega$ with $d=\mathfrak{K}/M_1$ and summing over $j$ we get 
$$\int_{\Omega\setminus \Omega_{\mathfrak{K}/(20M_1)}} |\nabla u|^q w dx \leq N_2 C \int_{\Omega} (|{\bf f}|^q + |u/\mathfrak{K}|^q) w dx.$$

Finally, combining the last two estimates we obtain inequality \eqref{stability-Cacciaopoli} 
with a constant $C=C(\lambda, \Lambda, q, n, M_{0},  \text{diam}(\Omega)/\mathfrak{K})$ as desired.
 
\subsection {Proof of Corollary \ref{weightedLorentz}}
For any  $w\in A_{q}$ such that $[w]_{A_{q}} \leq M_{0}$, using the open ended property of $A_{q}$ weights, Lemma \ref{open_ended},  there exists $\epsilon > 0$, $q-\epsilon >1$ such that $w\in A_{q-\epsilon}$ and $[w]_{A_{q-\epsilon}} \leq C([w]_{q})$. Also $w\in A_{q +\epsilon}$ and $[w]_{A_{q + \epsilon}} \leq [w]_{A_{q}}$
Now applying Theorem \ref{maintheorem}, there exist positive constants $C_{1}, C_{2}$ and $\delta$ such that  whenever $\mathbb{A}$ is $(\delta, \mathfrak{K})$-BMO and $\Omega$ is a $(\delta, \mathfrak{K})$-Lip domain, the solution-gradient operator $\mathcal{T} : {\bf f}\mapsto \nabla u$ is a well defined {\it linear operator} on both $L^{q-\epsilon}_{w}(\Omega;\mathbb{R}^{n})$ and $L^{q+\epsilon}_{w}(\Omega;\mathbb{R}^{n})$. Moreover, $\mathcal{T}$  satisfies the estimates  
\[
\|\mathcal{T}({\bf f})\|_{L^{q-\epsilon}_{w} (\Omega)} \leq C_{1} \|{\bf f}\|_{L^{q-\epsilon}_{w}(\Omega)}\quad \forall {\bf f}\in L^{q-\epsilon}_{w}(\Omega;\mathbb{R}^{n})
\]
and 
\[
\|\mathcal{T}({\bf f})\|_{L^{q+\epsilon}_{w} (\Omega)} \leq C_{2} \|{\bf f}\|_{L^{q+\epsilon}_{w}(\Omega)}\quad \forall {\bf f}\in L^{q+\epsilon}_{w}(\Omega;\mathbb{R}^{n}). 
\]
Given $\theta_{0}\in (0, 1)$ we can now apply the interpolation theorem \cite[Theorem 1.4.19]{Gra} with $\frac{1}{p} := \frac{1-\theta_{0}}{q-\epsilon} + \frac{\theta_{0}}{q + \epsilon}$ to obtain the estimate 
\[
\|\mathcal{T}({\bf f})\|_{L^{p, r}_{w} (\Omega)} \leq C \|{\bf f}\|_{L_{w}^{p, r}(\Omega)}\quad \forall {\bf f}\in L^{p, r}_{w}(\Omega;\mathbb{R}^{n})  \, \text{and $0 < r\leq \infty$}. 
\] 
 In particular, if we choose $\theta_{0} = \frac{q + \epsilon}{2q} \in (0, 1)$ we obtain that $p=q$ and 
\[
\|\nabla u\|_{L^{q,r}_{w}(\Omega)}  = \|\mathcal{T}({\bf f})\|_{L^{q, r}_{w} (\Omega)} \leq C \|{\bf f}\|_{L_{w}^{q, r}(\Omega)} \quad \forall {\bf f}\in L^{q, r}_{w}(\Omega;\mathbb{R}^{n})  \, \text{and $0 < r\leq \infty$}, 
\]
provided $\mathbb{A}$ is $(\delta, \mathfrak{K})$-BMO and $\Omega$ is a $(\delta, \mathfrak{K})$-Lip domain as claimed.

 \subsection{Proof of Theorem \ref{Lorentz-Morrey} }
 We will show that the theorem actually follows from Corollary \ref{weightedLorentz}, after choosing an appropriate choice of weight functions as in the proof of \cite[Theorem 2.3]{M-P2}. We sketch its proof here, referring \cite{M-P2} for details. Suppose that ${\bf f}\in \mathcal{L}^{q, r; \theta}(\Omega;\mathbb{R}^{n})$ where $1 < q < \infty$ and $0 < t < \infty$.
 For any $z\in \Omega$, $0 < \rho \leq \text{diam}(\Omega) $ and for any $\varepsilon\in (0, \theta)$, consider the weight 
 \[
 w_{z} (x) = \min\{|x - z|^{-n + \theta - \varepsilon}, \rho^{-n +\theta-\varepsilon}\}.
 \]
 Then for each $z$, $w_{z}\in A_{q}$ for any $q\in (1, \infty)$  (see \cite[Chapter 9]{Gra}) and 
 \[
 [w_{z}]_{A_{q}} \leq C(n, q,  \theta). 
 \] 
 for some constant $C(n, q, r, \theta)$ independent of $z$ and $\rho$. 
 On the one hand, since $w_{z}(x) \equiv \rho^{-n + \theta-\varepsilon}$ on $B_{\rho}(z)$ we have that 
 \[
 \| \nabla u\|_{L^{q, r}(B_{\rho}(z)\cap \Omega)}^{r}  = \rho^{\frac{(n-\theta+ \varepsilon)r}{q}}  \| \nabla u\|_{L^{q, r}_{w_{z}}(B_{\rho}(z)\cap \Omega)}^{r} \leq C \rho^{\frac{(n-\theta+ \varepsilon)r}{q}}  \|{\bf f}\|_{L^{q, r}_{w_{z}}( \Omega)}^{r}  
 \]
 where we have used Corollary \ref{weightedLorentz} (with $M_{0} = C(n, q,  \theta)$) provided the coefficient matrix has small BMO seminorm and the Lipschitz constant of $\Omega$ is also small.     
 On the other hand, it turns out that for a constant $C$ that depends only on $q, r, $ and $n$
 \[
 \|{\bf f}\|_{L^{q, r}_{w_{z}}( \Omega)}^{r}  \leq C \|{\bf f}\|^{r}_{\mathcal{L}^{q, t; \theta}} \rho ^{\frac{-\varepsilon \ r}{q}};
 \]
 see the proof of \cite[Theorem 2.3]{M-P2} for details. Combining the above inequalities we have 
 \[
 \| \nabla u\|_{L^{q, r}(B_{\rho}(z)\cap \Omega)}^{r} \leq C \|{\bf f}\|^{r}_{\mathcal{L}^{q, t; \theta}} \rho ^{\frac{(n- \theta)r}{q}}, 
 \]
which is valid for all $z \in \Omega$ and $\rho\in (0, \text{diam} (\Omega)]$ from which the desired estimate follows.  

\appendix

\section{Very weak solutions in $W^{1, p}$, $p > 1$, are finite energy solutions} \label{APP-A}
In this appendix we would like to demonstrate the validity of Brezis's result \cite[Theorem A1.1]{BRE} up to the boundary.   Brezis's result \cite[Lemma A.1]{BRE} says that very weak solutions of homogeneous linear equation with continuous coefficients that are in $W^{1, p}$ for some $p > 1$ are in fact  in $W^{1, q}_{loc}$ for any $q>1$.  In this appendix we will show that in fact the statement will remain true for boundary value problems even with coefficients with small BMO and posed over  half balls, having a zero boundary condition on the flat part of the boundary.     The proof strictly follows the argument used in the proof of  \cite[Lemma A.1]{BRE}, with natural modification to  fit our setting. 
The main tool we use is the following lemma which is actually the main result of \cite{Jia-Li-Wang07}. The result is stated in its general form, to include what are called `quasiconvex domains', see \cite[Definition 3.2]{Jia-Li-Wang07}. For our purpose we simply note that polygonal convex domains, sector of balls, and ball segments (such as half balls) are all quasiconvex domains. 
\begin{lemma}\cite[Theorem 1.1]{Jia-Li-Wang07}\label{a-Jia-Li-Wang07}
Let  $1 < p < \infty$  and ${\bf f} \in L^{p}(\Omega;\mathbb{R}^{n})$.
Suppose that  $\mathbb{A}$ is a  symmetric, uniformly elliptic matrix with constants of ellipticity $\lambda$ and $\Lambda$.  
Then there exists $\delta=\delta(n, p, \lambda, \Lambda ) > 0$ such that whenever  $\mathbb{A}$ is $(\delta, \mathfrak{K})$-BMO and $\Omega$ is a  $(\delta, \sigma, \mathfrak{K})$-quasiconvex bounded domain, the Dirichlet problem  
\[
\left\{\begin{aligned}
\text{div}\,\mathbb{A}(x)\nabla u(x) &= \text{div}\,{\bf f} \,\, \text{in }  \Omega, \\
u&= 0\quad \text{on } \partial \Omega,
\end{aligned} \right. 
\]
has a unique solution $u\in W^{1, p}_{0}(\Omega)$. Moreover, there exists $C=C(\lambda, \Lambda, n, p, \sigma, \mathfrak{K}, \Omega) > 0$ such that 
\[
\|\nabla u\|_{L^{p}(\Omega)} \leq C \|{\bf f}\|_{L^{p}}. 
\] 
\end{lemma}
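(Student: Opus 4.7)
The plan is to prove Lemma \ref{a-Jia-Li-Wang07} by the Caffarelli--Peral--Byun perturbation method combined with a Calder\'on--Zygmund level-set/maximal-function argument, exactly in the spirit of \cite{CP, B1, BW}. First I would settle the case $p=2$: by uniform ellipticity and the Lax--Milgram theorem, the Dirichlet problem admits a unique energy solution $u\in W^{1,2}_0(\Omega)$ with $\|\nabla u\|_{L^2(\Omega)}\leq \lambda^{-1}\|\mathbf{f}\|_{L^2(\Omega)}$. I would then use a density/approximation argument to let $p$ vary.

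For $p>2$, the strategy is a two-scale freezing argument. At each ball $B_{\rho}(x_0)\subset \Omega$, compare $u$ with the solution $v$ of $\text{div}\,\langle\mathbb{A}\rangle_{B_\rho(x_0)}\nabla v=0$ having the same Dirichlet data as $u$ on $\partial B_\rho(x_0)$; the difference $u-v$ satisfies a divergence-form equation with right-hand side $\text{div}\,[(\mathbb{A}-\langle\mathbb{A}\rangle_{B_\rho(x_0)})\nabla u]-\text{div}\,\mathbf{f}$, whose $L^2$ norm is small by the $(\delta,\mathfrak{K})$-BMO hypothesis and reverse H\"older bounds on $\nabla u$. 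Combined with interior $C^{1,\alpha}$ estimates for constant-coefficient homogeneous equations, this yields a quantitative mean-oscillation decay estimate for $\nabla u$. Translated into a statement about super-level sets of the Hardy--Littlewood maximal function, one obtains a good-$\lambda$-type inequality
\[
\bigl|\{\mathcal{M}(|\nabla u|^2)>N\lambda^2\}\cap \Omega\bigr|\leq \varepsilon\,\bigl|\{\mathcal{M}(|\nabla u|^2)>\lambda^2\}\cap \Omega\bigr|+\bigl|\{\mathcal{M}(|\mathbf{f}|^2)>\eta\lambda^2\}\cap \Omega\bigr|,
\]
where $\varepsilon$ is driven to be small by choosing the BMO constant $\delta$ small. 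Integrating against $\lambda^{p-1}\,d\lambda$ and using the weak-type $(1,1)$ of $\mathcal{M}$ absorbs the first term and yields $\|\nabla u\|_{L^p(\Omega)}\leq C\|\mathbf{f}\|_{L^p(\Omega)}$.

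For boundary balls, I would carry out the same comparison on $B_{\rho}(x_0)\cap\Omega$ for $x_0\in\partial\Omega$. Here the role of interior $C^{1,\alpha}$ is played by global Lipschitz (in averaged form) regularity for the constant-coefficient homogeneous Dirichlet problem on convex sets, together with the fact that a $(\delta,\sigma,\mathfrak{K})$-quasiconvex domain, by definition, is sandwiched at every small scale between a convex set and its $\delta$-enlargement. One runs the same Vitali-covering/good-$\lambda$ argument, this time over boundary balls, and the global estimate on $\Omega$ then follows by a finite covering involving both interior and boundary balls. Uniqueness is immediate from the $L^p$ estimate applied to $u_1-u_2$ with $\mathbf{f}=0$. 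Finally, for $1<p<2$, duality gives the result: since $\mathbb{A}$ is symmetric, the formal adjoint is the same operator, so testing the weak formulation of $u$ against the $W^{1,p'}_0$ solution of an auxiliary dual problem (already handled by the $p>2$ case) yields the $L^p$ bound.

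The hard part will be the boundary step on quasiconvex domains, since one cannot invoke standard $C^{1,\alpha}$ up-to-the-boundary regularity; instead one needs an averaged Lipschitz estimate for constant-coefficient Dirichlet solutions on convex approximants, and one must quantify how well the quasiconvex set is approximated by a convex set at the relevant scale in order to absorb the geometric error into the same $\delta$ that controls the BMO oscillation. Once this geometric comparison lemma is in place, the rest of the argument is a mechanical application of the Calder\'on--Zygmund/good-$\lambda$ machinery.
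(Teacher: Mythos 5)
The paper does not prove this statement at all: Lemma \ref{a-Jia-Li-Wang07} is imported verbatim as a citation of \cite[Theorem 1.1]{Jia-Li-Wang07} and is used in Appendix \ref{APP-A} purely as a black box (just as \cite[Theorem 1.5]{B1} is used in the body of the paper), so there is no internal proof to compare your argument against. What can be said is that your outline reproduces the strategy of the cited literature itself: Lax--Milgram for $p=2$, freezing of the coefficients on interior and boundary balls, comparison with constant-coefficient homogeneous solutions whose $C^{1,\alpha}$ (interior) or averaged Lipschitz (boundary) regularity drives a good-$\lambda$ inequality for $\mathcal{M}(|\nabla u|^2)$, absorption for small BMO constant $\delta$, and then duality via the symmetry of $\mathbb{A}$ for $1<p<2$, with uniqueness following from the estimate. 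This is indeed the Caffarelli--Peral/Byun--Wang scheme of \cite{CP,B1,BW} on which \cite{Jia-Li-Wang07,Jia-Li-Wang} are built, so the plan is the right one.

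As a proof, however, it is incomplete precisely where the cited theorem is nontrivial, and you flag this yourself. The boundary step requires two ingredients you do not establish: a quantitative up-to-the-boundary gradient estimate (in $L^\infty$ or high-integrability averaged form) for constant-coefficient homogeneous Dirichlet problems on the convex approximants, and a transfer lemma quantifying, at each scale below $\mathfrak{K}$, how the $(\delta,\sigma,\mathfrak{K})$-quasiconvexity lets you replace $B_\rho(x_0)\cap\Omega$ by a convex set up to an error absorbable into the same smallness parameter as the BMO oscillation. In addition, the comparison estimate needs a reverse H\"older (self-improving) inequality for $\nabla u$ valid up to $\partial\Omega$, which in turn uses the measure-density and Sobolev--Poincar\'e properties of quasiconvex domains; without it the $L^2$ smallness of $(\mathbb{A}-\langle\mathbb{A}\rangle)\nabla u$ on boundary balls is not justified. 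Also note that the weak $(1,1)$ bound for $\mathcal{M}$ alone does not ``absorb'' the first term of the good-$\lambda$ inequality after integration in $\lambda$; one needs the standard summation over dyadic levels with $\varepsilon N^{p/2}<1$, i.e.\ the smallness of $\varepsilon$ (hence of $\delta$) must be chosen depending on $p$. With these points supplied, your argument would amount to a reconstruction of the proof in \cite{Jia-Li-Wang07}; as written it is a correct plan rather than a complete argument.
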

The main theorem we would like to prove is the following.  We note that we have used the result with constant coefficients in the form given in Lemma \ref{W1r-solutions-energy-solution}. 
\begin{theorem}
Let $1 < r < \infty$, $R > 0$, and $\mathfrak{K} > 0$. Suppose that  $\mathbb{A}$ is a  symmetric, uniformly elliptic matrix with constants of ellipticity $\lambda$ and $\Lambda$. Suppose also that $u_{*} \in W^{1, r}(B_{R}^{+}(0))$ such that $u_{*}=0$ on $ B_{R}(0) \cap \partial \mathbb{R}^{n}_{+}$.
 Then there exists  $\delta=\delta(n, r, \lambda, \Lambda) > 0$ such that if $\mathbb{A}$ is  $(\delta, \mathfrak{K})$-BMO and $v$ solves  
\begin{equation}\label{A-flat-eq1}
\left\{\begin{aligned}
\text{div} \, {\mathbb{A}}(x)\nabla v &= 0\quad \text{in} \,\,B_{R}^{+}(0),\\
v - u_{*} &\in W^{1, r}_{0}(B_{R}^{+}(0)),
\end{aligned}\right.
\end{equation}
then for any $0<\tau < 1$, one has $v\in W^{1, 2}(B_{\tau R}^{+}(0))$ along with the estimate 
\[
\|v\|_{W^{1, 2}(B_{\tau R}^{+}(0))} \leq C_{\tau} \| v\|_{W^{1, r}(B_{R}^{+}(0))}. 
\]
The constant $C_{\tau}$  depends only  $\lambda, \Lambda, \tau,$ $r$, $R$, $n$ and $\mathfrak{K}$. 
\end{theorem}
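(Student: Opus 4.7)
Plan: The strategy is a bootstrap in the integrability exponent of $\nabla v$, in the spirit of Brezis's argument in \cite[Theorem A1.1]{BRE}, using Lemma \ref{a-Jia-Li-Wang07} as the key well-posedness tool for Dirichlet problems on the quasiconvex half-ball $B_R^+(0)$. I localize, convert the homogeneous equation for $v$ into a Dirichlet problem for a cut-off of $v$ whose right-hand side has one extra Sobolev degree of integrability, and iterate.

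First I localize. Fix $\tau\in(0,1)$ and choose a finite decreasing chain $1=\tau_0>\tau_1>\cdots>\tau_K=\tau$ together with cutoffs $\eta_k\in C_c^\infty(\RR^n)$ satisfying $\eta_k\equiv 1$ on $\overline{B_{\tau_k R}(0)}$ and $\operatorname{supp}\eta_k\subset B_{\tau_{k-1}R}(0)$. Set $W_k:=v\eta_k$. Since $v$ vanishes on the flat part $B_R(0)\cap\partial\RR^n_+$ and $\eta_k$ vanishes near the spherical part of $\partial B_R^+(0)$, one has $W_k\in W_0^{1,r}(B_R^+(0))$, and the distributional identity $\operatorname{div}(\mathbb{A}\nabla v)=0$ yields
\begin{equation*}
\operatorname{div}(\mathbb{A}\nabla W_k)\;=\;\operatorname{div}(v\,\mathbb{A}\nabla\eta_k)+\mathbb{A}\nabla v\cdot\nabla\eta_k\quad\text{in }\mathcal{D}'(B_R^+(0)).
\end{equation*}

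The bootstrap step runs as follows. Assume inductively $v\in W^{1,r_k}(B^+_{\tau_{k-1}R}(0))$, with $r_0:=r$, and put $r_k^*:=nr_k/(n-r_k)$ if $r_k<n$ (and any fixed large exponent otherwise). Sobolev embedding places $v$ in $L^{r_k^*}$ on this set, so $v\,\mathbb{A}\nabla\eta_k\in L^{r_k^*}(B_R^+(0))$. For the other term, the Sobolev embedding $W_0^{1,(r_k^*)'}(B_R^+(0))\hookrightarrow L^{r_k'}(B_R^+(0))$, valid since a short Hölder-exponent computation gives $((r_k^*)')^*=r_k'$, dualizes to the continuous inclusion $L^{r_k}(B_R^+(0))\hookrightarrow W^{-1,r_k^*}(B_R^+(0))$. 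Representing the right-hand side as a divergence, one obtains $\mathbf{F}_k\in L^{r_k^*}(B_R^+(0))$ with $\|\mathbf{F}_k\|_{L^{r_k^*}}\leq C\|v\|_{W^{1,r_k}(B^+_{\tau_{k-1}R}(0))}$ such that $\operatorname{div}(\mathbb{A}\nabla W_k)=\operatorname{div}\mathbf{F}_k$ in $\mathcal{D}'(B_R^+(0))$. Applying Lemma \ref{a-Jia-Li-Wang07} to $B_R^+(0)$ with exponent $r_k^*$, and requiring $\|\mathbb{A}\|_{*,\mathfrak{K}}\leq\delta(n,r_k^*,\lambda,\Lambda)$, produces a unique $\widetilde W\in W_0^{1,r_k^*}(B_R^+(0))$ solving the Dirichlet problem with source $\operatorname{div}\mathbf{F}_k$. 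Because $r_k^*>r_k$ and the domain is bounded, $W_0^{1,r_k^*}\subset W_0^{1,r_k}$, so uniqueness in $W_0^{1,r_k}$ from Lemma \ref{a-Jia-Li-Wang07} forces $W_k=\widetilde W\in W_0^{1,r_k^*}$, and since $W_k\equiv v$ on $B^+_{\tau_k R}(0)$ we conclude $v\in W^{1,r_k^*}(B^+_{\tau_k R}(0))$ with a quantitative estimate.

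Setting $r_{k+1}:=r_k^*$, the recursion $1/r_{k+1}=1/r_k-1/n$ forces $r_k$ to exceed $n$, and then any prescribed value, in at most $\lceil n/r\rceil+1$ steps; one may thus choose $K$ so that $r_K\geq 2$, take $\delta$ to be the minimum of the finitely many thresholds $\delta(n,r_k^*,\lambda,\Lambda)$ encountered, and chain the $K$ local estimates to obtain the claimed bound on $\|v\|_{W^{1,2}(B_{\tau R}^+(0))}$. The principal technical obstacle is matching the uniqueness classes across the iteration so that the higher-regularity solution produced by Lemma \ref{a-Jia-Li-Wang07} at each step actually coincides with $W_k$; the uniqueness statement of Lemma \ref{a-Jia-Li-Wang07} in $W_0^{1,r_k}$ is exactly what one needs here, playing the same role in the present half-ball setting as uniqueness of very weak solutions does in Brezis's original interior argument.
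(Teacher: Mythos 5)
Your argument is correct in substance, but it runs on a different engine than the paper's. The paper follows Brezis's duality scheme: it fixes a smooth test field ${\bf g}$ with $\|{\bf g}\|_{L^{s'}}\leq 1$, solves the (same, by symmetry) equation with datum $\operatorname{div}{\bf g}$ via Lemma \ref{a-Jia-Li-Wang07} at the \emph{high} exponents $s'$ and $r'$, tests the two equations against the cutoff products $\zeta v$ and $\zeta w$, and concludes $\zeta\nabla v\in L^{r^*}$ by duality, iterating along the Sobolev chain $r\to r^*\to r^{**}\to\cdots$ until it passes $2$. You instead localize $v$ itself, observe that $W_k=v\eta_k\in W^{1,r_k}_0(B_R^+(0))$ solves a Dirichlet problem whose right-hand side gains one Sobolev step of integrability (using $v\,\mathbb{A}\nabla\eta_k\in L^{r_k^*}$, the embedding $L^{r_k}\hookrightarrow W^{-1,r_k^*}$ coming from $((r_k^*)')^*=r_k'$, and the Hahn--Banach representation of $W^{-1,p}$ functionals as divergences of $L^p$ fields), then invoke Lemma \ref{a-Jia-Li-Wang07} at exponent $r_k^*$ for existence and at exponent $r_k$ for uniqueness to identify $W_k$ with the more regular solution. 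The trade-off: the paper only ever applies the external solvability result to explicit divergence data and to exponents at or above $2$, at the cost of the two-equation testing bookkeeping; your route is more direct and yields the quantitative bound in one line per step, but it leans on the uniqueness of $W^{1,p}_0$ solutions for $p<2$ with small-BMO coefficients (which Lemma \ref{a-Jia-Li-Wang07} does supply for all $1<p<\infty$) and on the negative-Sobolev representation. Two small bookkeeping points to tidy up: your list of smallness thresholds should also include the uniqueness thresholds at the lower exponents of the chain, beginning with $\delta(n,r,\lambda,\Lambda)$ itself (the paper makes the analogous remark at the end of its proof), and the induction indices $(r_k,\tau_k)$ are off by one as written; neither affects the validity of the argument.
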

\begin{proof}
There is nothing to prove if $r \geq 2$. So we assume that $1 < r <2$.  
Suppose that ${\bf g} \in C_{c}^{\infty}(B_{R}^{+}(0))$ such that 
\[
\|{\bf g}\|_{L^{s'}(B_{R}^{+}(0))} \leq 1, 
\]
where $\frac{n}{n-1}<s\leq  2$  to be determined and $1/s  + 1/s' = 1$. 

We now apply Lemma \ref{a-Jia-Li-Wang07} to obtain $\delta  > 0$ such that a unique solution $w\in W^{1, 2}_{0}(B_{R}^{+}(0)) \cap W_{0}^{1, s'}(B_{R}^{+}(0))$  to  $\text{div}\, {\mathbb{A}} \nabla w = \text{div}\, {\bf g}$ in $B_{R}^{+}(0)$ 
exists such that
\begin{equation}\label{2-s'-estimates} 
 \quad \|w\|_{W^{1, s'}(B_{R}^{+}(0))} \leq C \|{\bf g}\|_{L^{s'}(B_{R}^{+}(0))} \leq C,
\end{equation}
provided $\mathbb{A}$ is $(\delta, \mathfrak{K})$-BMO. 
Note also that by choosing $\delta$ even smaller, we can have  $w\in W^{1, r'}(B_{R}^{+}(0)) $ where $r'$ is the H\"older conjugate exponent of $r$.
By definition of $w$, we have that 
\[
\int_{B_{R}^{+}(0)} {\mathbb{A}}\nabla w \cdot \nabla \phi dx = \int_{B_{R}^{+}(0))} {\bf g}\cdot \nabla \phi dx, \quad \quad \forall \phi\in C_{c}^{1}(B_{R}^{+}(0)). 
\]
Moreover,  by density the above equation is valid for all $\phi\in W^{1, r}_{0}(B_{R}^{+}(0))$.   
Now for any fixed $\zeta\in C_{c}^{\infty}(B_{R}(0))$, we can take $\phi(x) = \zeta(x) v(x) $ as a test function in the above, since  by assumption $v\in W^{1, r}(B_{R}^{+}(0))$  and $v = 0$ on $B_{R}(0) \cap \partial \mathbb{R}^{n}_{+}$, and therefore the product $\phi(x) = \zeta(x) v(x) \in W^{1, r}_{0}(B_{R}^{+}(0))$.   We then have the following:
\begin{equation}\label{a-w-equation}
\int_{B_{R}^{+}(0))} {\mathbb{A}} \nabla w\cdot (\zeta \nabla v + v \nabla \zeta) dx = \int_{B_{R}^{+}(0))} {\bf g}\cdot (\zeta \nabla v + v \nabla \zeta )dx.
\end{equation}
Again, since $v\in W^{1, r}(B_{R}^{+}(0)) $ solves equation \eqref{A-flat-eq1},  it follows from the definition of $v$ as a solution and density   
\[
\int_{B_{R}^{+}(0)}{\mathbb{A}}\nabla v \cdot \nabla \psi dx =0,\quad \quad \forall \psi \in W_{0}^{1,r'}(B_{R}^{+}(0)).
\]
Now take $\psi = \zeta w \in W_{0}^{1, r'}(B_{R}^{+}(0))$ as a test function in the above to get that 
\begin{equation}\label{a-v-equation}
\int_{B_{R}^{+}(0)}{\mathbb{A}}\nabla v \cdot (w\nabla \zeta  + \zeta \nabla w)   dx =0. 
\end{equation}
Comparing equations \eqref{a-w-equation} and \eqref{a-v-equation} we obtain that 
\[
\begin{split}
\int_{B_{R}^{+}(0)} \zeta \nabla v \cdot {\bf g} dx &=  -\int_{B_{R}^{+}(0)} w\left({\mathbb{A}} \nabla v \cdot \nabla \zeta \right)dx + \int_{B_{R}^{+}(0)}v \left( {\mathbb{A}}\nabla w \cdot \nabla \zeta \right)dx - \int_{B_{R}^{+}(0)} v {\bf g}\cdot \nabla \zeta dx\\
&=I_{1} + I_{2} + I_{3}
\end{split}
\]
Since $r<2\leq n$, and by the Sobolev embedding, 
\[
\|v\|_{L^{r^{*}} (B_{R}^{+}(0))} \leq C \|v\|_{W^{1, r}(B_{R}^{+}(0))},\quad\text{where } \quad \frac{1}{r^{*}} = \frac{1}{r} - \frac{1}{n}.
\]
We estimate  $I_{1}$,  $I_{2}$ and $I_{3}$, by choosing $s\in (n/(n-1), 2]$ in the following way:  if $r^{*} \leq 2$, then choose $s = r^{*}$, if $r^{*}> 2$, choose $s=2$.  
\subsection*{\bf Case 1} $r^{*} > 2$: In this case choose $s=2$, ($s' = 2$ ). Then we have $w\in W^{1, 2}(B_{R}^{+}(0)) $ and from  \eqref{2-s'-estimates} we have 
\[
\|w\|_{W^{1, 2}(B_{R}^{+}(0))} \leq C. 
\]
Now when $n\geq 3$,   the assumption $r^{*} > 2$ is equivalent to  $r' < 2^{*}$; when $n = 2$, Sobolev imbedding  implies that 
$W^{1, 2}(B_{R}^{+}(0)) \hookrightarrow L^{q}(B_{R}^{+}(0))$ for any  $q\in [2, \infty)$. Combining the two we find that 
\[
\|w\|_{L^{r'}(B_{R}^{+}(0))} \leq C.
\]
With this at hand, we can now estimate $|I_{i}|$, $i= 1, 2, 3$. To that end,
\[
|I_{1}| \leq \int_{B_{R}^{+}(0)} |w||{\mathbb{A}} \nabla v||\nabla \zeta| dx \leq C \|\nabla \zeta\|_{L^{\infty}} \| w\|_{L^{r'}(B_{R}^{+}(0)) } \| \nabla v\|_{L^{r}(B_{R}^{+}(0))} \leq C \|\nabla v\|_{L^{r}(B_{R}^{+}(0))}. 
\]
We also notice from Sobolev embedding that 
\[
\|v\|_{L^{2}(B_{R}^{+}(0))} \leq C \|v\|_{L^{r^{*} } (B_{R}^{+}(0))} \leq C \| v\|_{W^{1, r}(B_{R}^{+}(0))}. 
\]
As a consequence, 
\[
|I_{2}| \leq \int_{B_{R}^{+}(0))} |v||{\mathbb{A}}\nabla w| |\nabla \zeta|dx \leq C \|\nabla \zeta \|_{L^{\infty}} \|v\|_{L^{2}(B_{R}^{+}(0))} \|\nabla w\|_{L^{2}(B_{R}^{+}(0))} \leq C \|v\|_{W^{1, r}(B_{R}^{+}(0))}, 
\]
and 
\[
|I_{3}| \leq C \|\nabla \zeta\|_{L^{\infty}}\|{\bf g}\|_{L^{2}}\| v\|_{L^{2}}  \leq C  \|v\|_{W^{1, r}(B_{R}^{+}(0))}. 
\]
Combining the estimates for $|I_{i}|$, we get that 
\[
\left|\int_{B_{R}^{+} (0)} \zeta \nabla v \cdot {\bf g} dx\right| \leq C \|v\|_{W^{1, r}(B_{R}^{+}(0))}, \quad \text{provided $\|{\bf g}\|_{L^{2}} \leq 1$}.  
\]
In particular, by choosing the cut off function $\zeta$ appropriately,  for a given $\tau > 0$, $v\in W^{1,2}(B_{\tau R}^{+}(0))$ and 
\[
\|v\|_{W^{1,2} (B_{\tau R}^{+}(0))} \leq C_{\tau} \|v\|_{W^{1, r}(B_{R}^{+}(0))}. 
\]

\subsection*{Case 2}$r^{*} \leq 2$: In this case, take $s = r^{*}$. We then have that
\[
\frac{1}{r'} = 1- \frac{1}{r} = \frac{1}{r^{*}} + \frac{1}{s'} - \frac{1}{r} = \frac{1}{s'} - \frac{1}{n}. 
\]
That is,  the Sobolev conjugate of $s'$ is $r'$ and that $s' = \frac{r^{*}}{r^{*} - 1} < n$. 
As a consequence, by \eqref{2-s'-estimates} and Sobolev embedding again we have 
\[
\|w\|_{L^{r'} (B_{R}^{+}(0))} \leq C \|w\|_{W^{1, s'}(B_{R}^{+}(0))} \leq C. 
\]
We again use this to estimate $|I_{i}|,$ $i=1, 2, 3$. We begin with $I_{1}$: 
\[
|I_{1}| \leq \int_{B_{R}^{+}(0)} |w||{\mathbb{A}} \nabla v||\nabla \zeta| dx \leq C \|\nabla \zeta\|_{L^{\infty}} \| w\|_{L^{r'}(B_{R}^{+}(0)) } \| \nabla v\|_{L^{r}(B_{R}^{+}(0))} \leq C \|\nabla v\|_{L^{r}(B_{R}^{+}(0))}. 
\]
Next, since $s = r^{*}$, we have that $\frac{1}{r*} + \frac{1}{s'}  = 1$, and applying H\"older's inequality with exponents $r^{*}$ and $s'$  we obtain that 
\[
|I_{2}| \leq \int_{B_{R}^{+}(0))} |v||{\mathbb{A}}\nabla w| |\nabla \zeta|dx \leq C \|\nabla \zeta \|_{L^{\infty}} \|v\|_{L^{r^{*}}(B_{R}^{+}(0))} \|\nabla w\|_{L^{s'}(B_{R}^{+}(0))} \leq C \|v\|_{W^{1, r}(B_{R}^{+}(0) }.
\]
Finally, by Sobolev embedding,
\[
|I_{3}| \leq C \|\nabla \zeta\|_{L^{\infty}}\|{\bf g}\|_{L^{s^{'}}}\| v\|_{L^{r*}}  \leq C  \|v\|_{W^{1, r}(B_{R}^{+}(0))}. 
\]
Combining the above inequalities, we obtain that there exists a constant $C$, that depends on $\zeta$, such that 
\[
\left | \int_{B_{R}^{+}(0)}\zeta \nabla v \cdot {\bf g} dx  \right| \leq C \|v\|_{W^{1, r}(B_{R}^{+}(0))}. 
\]
As this holds for all ${\bf g} \in C_c^\infty(B_{R}^{+}(0))$ such that $\| {\bf g} \|_{L^{s'}} \leq 1$, by duality we get  
\[
\left ( \int_{B_{R}^{+}(0)}|\zeta \nabla v|^{r^*} dx  \right)^{\frac{1}{r^*}} \leq C \|v\|_{W^{1, r}(B_{R}^{+}(0))}. 
\]

Now,  given $\epsilon\in (0,1)$, we may choose $\zeta \in C_{c}^{\infty}(B_{R}(0))$  such that  $0\leq \zeta \leq 1$, $\zeta = 1$ on $B_{\epsilon R}(0)$. 
For this choice of $\zeta$ we have  in particular that $v\in W^{1, r^{*}} (B_{\epsilon R}^{+}(0)) $ and 
\[ \| v\|_{W^{1, r^{*} } (B_{\epsilon R}^{+}(0)) } \leq C_{\epsilon}  \|v\|_{W^{1, r}(B_{R}^{+}(0))}. \] 

It is easy to see that $r < r^{*}$. If $r^{*}  =2$, then we are done. Otherwise, applying the argument of {\bf Case 2} yields $r^{**} > r^{*}$ that $v \in W^{1, r^{**} }(B_{(\epsilon/2) R}^{+}(0))$, and then $v \in W^{1, r^{***} }(B_{(\epsilon/3)R}^{+}(0))$ and so on, until $r^{**\cdots *}$ reaches  the first value bigger than 2, at which point we apply {\bf Case 1} to obtain that $v \in W^{1, 2}(B_{(\epsilon/m)R}^{+}(0) )$ for some positive integer $m = m(r)$.  In particular, taking $\tau=\varepsilon/m $ gives the desired result. We emphasize that for the argument to work we need to verify that the solution $w$ belongs to $W^{1, r^{**\cdots*}}_{0} (B_{R}^{+}(0))$, and for that we must choose $\delta$ sufficiently small.  
\end{proof}

\end{document}